\newtheorem{definition}{Definition}
\newtheorem{theorem}{Theorem}
\newtheorem{claim}{Claim}
\newtheorem{example}{Example}
\newenvironment{proof}{{\bf Proof.}}{\hfill $\Box$ \bigskip}
\title{Invariant Manifolds
for Competitive Systems in the Plane}
\author{M. R. S. Kulenovi\'{c} and Orlando Merino \\
Department of Mathematics \\
University of Rhode Island \\
Kingston, Rhode Island 02881-0816 }
\date{\today}
\begin{document}
\maketitle
\thispagestyle{empty}
\begin{abstract}
Let $T$ be a competitive map  on a rectangular region
$\mathcal{R}\subset \mathbb{R}^2$, and assume $T$ is
$C^1$ in a neighborhood of a fixed point
$\overline{\rm x}\in \mathcal{R}$.
The main results of this paper give  conditions on $T$ that guarantee the existence of
an invariant curve emanating from $\overline{\rm x}$
when both eigenvalues of the Jacobian of $T$
at $\overline{\rm x}$ are nonzero and at least one of them
has absolute value less than one,
and establish that $\mathcal{C}$  is an increasing curve
that separates $\mathcal{R}$ into invariant regions.
The results apply to many hyperbolic and nonhyperbolic cases,
and can be effectively used to determine basins of attraction
of fixed points of competitive maps,
or equivalently,
of equilibria of competitive systems of difference equations.
Several applications to
planar systems of difference equations with
non-hyperbolic equilibria are given.
\end{abstract}
\newpage
\section{Introduction and main results}
\label{sec: Introduction}
The following system of difference equations is analyzed in \cite{CK} for $a>1$:
\begin{equation}
\label{equation:compet2dclarkulb=1}
\left\{
\begin{array}{rcl}
x_{n+1} &=& \displaystyle{\frac{x_n}{a + y_n}} \\  \\
y_{n+1} &=& \displaystyle{\frac{y_n}{1 + x_n}}
\end{array}
\right.
\, ,
\quad n=0,1,2,\ldots,
\quad x_{0},\, y_0 \geq 0\, .
\end{equation}
It is shown there that every point $(0, \bar{y})$ on the positive section of the $y$-axis is a
non-hyperbolic equilibrium point with
eigenvalues of the Jacobian of the associated map at the point,
or characteristic values,  given by
$\lambda_1=1$ and $\lambda_2=\frac{1}{a+\bar{y}}$.
It is also shown in \cite{CK} that for each $\overline{y}>0$, equation
(\ref{equation:compet2dclarkulb=1}) possesses solutions which converge to
$(0, \bar{y})$.
Parts of the basins of attraction of the equilibrium points $(0, \bar{y})$
were found in \cite{CK},
and the global behavior of solutions of system (\ref{equation:compet2dclarkulb=1}) was
characterized completely, with the exception of determining the basin of attraction of each equilibrium point.
Extensive simulations conducted by the authors of \cite{CK}
suggest that the basin of attraction of each equilibrium point on the $y$-axis
 is   the graph of a continuous increasing function on $[0,\infty)$, but
they  were not able to prove this fact.
A similar phenomenon has been observed in \cite{CaKuLaMe}
in several special cases of {\em competitive systems} of the form
\begin{equation}
\label{system1}
\left\{
\begin{array}{l}
x_{n+1}=\displaystyle{\frac{\alpha_{1}+\beta_{1} x_{n}+\gamma_{1} y_{n}}{A_{1}+B_{1} x_{n}+C_{1}y_{n}}}\\
\\
y_{n+1}=\displaystyle{\frac{\alpha_{2}+\beta_{2} x_{n}+\gamma_{2} y_{n}}{A_{2}+B_{2} x_{n}+C_{2}y_{n}}}\\
\end{array}
\right., \;\; n=0,1,\ldots\, ,
\end{equation}
with  nonnegative parameters $\alpha_{1}$, $\beta_{1}$,
$\gamma_{1}$, $A_{1}$, $B_{1}$, $C_{1}$, $\alpha_{2}$,
$\beta_{2}$, $\gamma_{2}$, $A_{2}$, $B_{2}$, $C_{2}$ and with
arbitrary nonnegative initial conditions $x_{0}$, $y_{0}$ such
that the denominators are always positive. See Open Problems 1-3 in \cite{CaKuLaMe}.

A first order system of difference equations
\begin{equation}
\label{equation:compet2d}
\left\{
\begin{array}{rcl}
x_{n+1} &=& f(x_n,y_n) \\
y_{n+1} &=& g(x_n,y_n)
\end{array}
\right.
\, ,
\quad n=0,1,2,\ldots,
\quad (x_{-1},x_0)\in \mathcal{R}\, ,
\end{equation}
where $\mathcal{R} \subset \mathbb{R}^2$, $(f, g): \mathcal{R} \rightarrow  \mathcal{R}$,
$f$, $g$ are continuous functions
is  {\it competitive} if
$f(x,y)$ is non-decreasing in $x$ and non-increasing in $y$, and $g(x, y)$ is non-increasing in $x$ and non-decreasing in $y$.
If both $f$ and $g$ are nondecreasing in $x$ and $y$, the system
(\ref{equation:compet2d}) is {\it cooperative}.
Competitive and cooperative maps are defined similarly.
{\it Strongly  competitive} systems of difference equations or maps are those for which the functions
$f$ and $g$ are coordinate-wise stricly monotone.
Competitive and cooperative systems of difference equations of the form (\ref{equation:compet2d})
have been studied by many authors
\cite{BM, BKK1, CaKuLaMe, CK, CKS, CAKS, CLCH, Dancer and Hess, deMS, FY1, FY2, FY3, HP, HS1, KUM1, KUM2, KN4, PT1, S1986JDE, S1986SIAM, S2-1986SIAM,  Sm1}
 and others.

A classical result of Poincar\'e, Hadamard, and Sternberg
(see Lemma 5.1  in page 234 of \cite{Hartman} and the Notes section  in page 271 therein)
gives conditions for the existence of a {\it local} smooth curve through
the fixed point of a smooth map on the plane when
the characteristic values are real and distinct and such that one of them is smaller than $1$.
The  well known Stable Manifold Theorem   has local character and
applies to hyperbolic cases of very general maps in the plane, for example see \cite{ASY}.
The local stable manifold of a diffeomorphism is always a ``nice'' curve, but  in general
the corresponding global stable manifold may be a very complex set,
for example it is a strange attractor  in the case of Henon's system,
see \cite{ASY} and references therein. See also \cite{KUM1}.
H. L. Smith  \cite{S1986SIAM}
obtained results for fixed points of smooth maps on Banach space
and showed that under certain conditions,
if the Frechet derivative of the map at the fixed point is an eigenvalue larger than one,
then there is an invariant curve emanating from the fixed point, which Smith termed the
``most unstable manifold''.  Smith also showed that if the map has certain monotonicity
conditions, then the curve is monotone.
See also \cite{SZ,CKS}.

The first result of this article
gives conditions for the existence of a global invariant curve through
a fixed point (hyperbolic or not) of a competitive map that is differentiable
in a neighborhood of the fixed point, when at least one of two nonzero eigenvalues
of the Jacobian of the map at the fixed point has absolute
value less than one.
Proofs for all theorems in this section
will be given in Section \ref{sec: Proof}.
A region $\mathcal{R}\subset \mathbb{R}^2$ is {\em rectangular}
if it is the cartesian product of two intervals in $\mathbb{R}$.
By ${\rm int}\,\mathcal{A}$ we denote the interior of a set $\mathcal{A}$.
\begin{theorem}
\label{th: invariant curve}
Let  $T$ be a competitive
map on a rectangular region $\mathcal{R}\subset \mathbb{R}^2$.
Let  $\overline{\rm x}\in \mathcal{R}$ be a fixed point of $T$
such that $\Delta:=\mathcal{R}\cap {\rm int}\,( \mathcal{Q}_{1}(\overline{\rm x}) \cup \mathcal{Q}_3(\overline{\rm x}))$
is nonempty (i.e., $\overline{\rm x}$ is not the NW or SE vertex of $\mathcal{R})$,
and $T$ is strongly competitive on $\Delta$.
Suppose that the following statements are true.
\begin{itemize}
\item[\rm a.]
The map $T$ has a  $C^1$ extension to a  neighborhood of $\overline{\rm x}$.
\item[\rm b.]
The Jacobian $J_T(\overline{\rm x}) $ of $T$ at $\overline{\rm x}$ has real eigenvalues
$\lambda$, $\mu$ such that  $0< |\lambda|< \mu$, where $|\lambda|<1$,
and the eigenspace $E^\lambda$
associated with $\lambda$ is not a coordinate axis.
\end{itemize}
Then there exists a   curve $\mathcal{C} \subset \mathcal{R}$
through $\overline{{\rm x}}$ that is invariant and
 a subset
of the basin of attraction of $\overline{{\rm x}}$, such that
$\mathcal{C}$ is tangential to the eigenspace $E^\lambda$ at $\overline{{\rm x}}$,
and $\mathcal{C}$ is the graph of a strictly increasing continuous
function of the first coordinate on an interval.
Any endpoints of  $\mathcal{C}$ in the interior of $\mathcal{R}$
are either fixed points or minimal period-two points.  In the latter case,
the set of  endpoints of $\mathcal{C}$ is a minimal period-two orbit of $T$.
\end{theorem}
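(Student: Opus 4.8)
The plan is to build the curve $\mathcal{C}$ as a monotone limit of iterates of $T$ applied to a carefully chosen initial segment, exploiting the order structure induced by the competitive cone. First I would introduce the standard partial order in which competitive maps become, after the coordinate flip $(x,y)\mapsto(x,-y)$, monotone (order-preserving) maps; with respect to this ``south-east'' order $\preceq_{se}$, the map $T$ is nondecreasing on $\mathcal{R}$, and strongly increasing on $\Delta$. The eigenspace $E^\lambda$ is not a coordinate axis, so it is a line of the form $y - \bar y = m(x-\bar x)$ with $m>0$ (a standard sign computation for competitive Jacobians forces the eigenvector for the eigenvalue of smaller modulus to lie in the first/third quadrant directions, while the other eigenvector lies in the second/fourth). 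Using the $C^1$ character of $T$ near $\overline{\rm x}$ and $|\lambda|<\mu$, I would show that there is a small closed segment $S_0$ of a curve through $\overline{\rm x}$, contained in $\mathcal{R}$ and tangent to $E^\lambda$, which is ``forward-contracted'': $T(S_0)$ is again a curve through $\overline{\rm x}$, is contained in $S_0$, and its endpoints move strictly toward $\overline{\rm x}$ along $S_0$. This is the one genuinely analytic ingredient; it is essentially the local normal-form / graph-transform argument behind the Poincaré–Hadamard–Sternberg lemma cited in the introduction, adapted so that the local curve is a graph of an increasing function (possible precisely because $E^\lambda$ has positive slope).

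**Constructing $\mathcal{C}$.**

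Given $S_0$, define $S_n := T^n(S_0)$. Monotonicity of $T$ plus $T(S_0)\subseteq S_0$ gives a nested decreasing sequence of connected sets all containing $\overline{\rm x}$, each an increasing-graph arc (increasing graphs are preserved by competitive maps — this should be recorded as a short lemma, following from the definition of competitiveness). The local contraction estimate forces the endpoints of $S_n$ to converge to $\overline{\rm x}$, so $\bigcap_n S_n$ reduces to $\{\overline{\rm x}\}$ on the local scale; the actual curve is produced by going the other way — taking the union of \emph{pre-segments}. Concretely, pick on $S_0$ the two points $\overline{\rm x}^- \preceq_{se} \overline{\rm x} \preceq_{se} \overline{\rm x}^+$ that $T$ maps strictly inward; because $T$ restricted to the arc between $\overline{\rm x}^\pm$ is a strictly increasing self-map of an interval (parametrize $S_0$ monotonically), $T$ has, on each side, a well-defined increasing inverse branch, and the arcs $S_0 \subseteq T^{-1}(S_0)\cap\mathcal{R} \subseteq T^{-2}(S_0)\cap\mathcal{R}\subseteq\cdots$ form an increasing nested family of increasing-graph arcs. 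Set $\mathcal{C}$ to be the closure of their union. Each such arc is the graph of a strictly increasing continuous function on an interval, and the arcs agree where they overlap, so the union is again such a graph; invariance $T(\mathcal{C})\subseteq\mathcal{C}$ is immediate from the construction, and $\mathcal{C}\subseteq\mathcal{R}$ because at every stage we intersect with $\mathcal{R}$. That $\mathcal{C}$ lies in the basin of attraction of $\overline{\rm x}$ follows because every point of $\mathcal{C}$ is eventually mapped into $S_0$, on which $T$ contracts to $\overline{\rm x}$. Tangency to $E^\lambda$ at $\overline{\rm x}$ is inherited from $S_0$.

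**The endpoint statement.**

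For the final claim, let $\rm p$ be an endpoint of $\mathcal{C}$ lying in ${\rm int}\,\mathcal{R}$. The curve $\mathcal{C}$, being the graph of an increasing function on an interval $(a,b)$ (or half-open), has at most two such endpoints, corresponding to the endpoints of the interval being interior to the projection of $\mathcal{R}$. Because $\mathcal{C}$ is invariant and $T$ is continuous, $T$ maps the endpoint set of $\mathcal{C}$ into the endpoint set of $\overline{T(\mathcal{C})}\subseteq\overline{\mathcal{C}}$; since $\mathcal{C}\subseteq\overline{\mathcal C}$ has the same two interior endpoints, $T$ permutes the (at most two) interior endpoints of $\mathcal{C}$. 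Here I would use monotonicity of $T$ together with the orientation of $E^\lambda$ to see which permutation can occur: $T$ preserves the $se$-order on $\mathcal{C}$, hence preserves the order of the two endpoints along $\mathcal{C}$, so it either fixes each endpoint — giving a fixed point — or, if there is a single interior endpoint, fixes it; the only way to get a nontrivial permutation is a $2$-cycle, which requires \emph{two} interior endpoints $\{{\rm p},{\rm q}\}$ with $T({\rm p})={\rm q}$, $T({\rm q})={\rm p}$, i.e. a minimal period-two orbit, and this orbit is then exactly the endpoint set. Minimality (the points are not fixed) is automatic since in the $2$-cycle case ${\rm p}\neq{\rm q}$. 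The one point needing care is ruling out that an endpoint is mapped into the \emph{interior} of $\mathcal{C}$: this cannot happen because that would force, by continuity and invariance, the image of a full neighborhood of the endpoint in $\mathcal{C}$ to lie in $\mathcal{C}$ and strictly on one side, contradicting that the endpoint's forward orbit stays in $\mathcal{C}$ and in fact limits onto $\overline{\rm x}$ only if the endpoint is eventually inside $S_0$ — but points of a genuine $2$-cycle are not, so they must be endpoints of every pre-segment $T^{-n}(S_0)\cap\mathcal R$, which pins them down as the endpoints of $\mathcal{C}$.

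**Main obstacle.** The delicate step is the first one — producing the local invariant segment $S_0$ that is simultaneously (i) a graph of an increasing function, (ii) tangent to $E^\lambda$, and (iii) strictly forward-contracted by $T$ — in the \emph{non-hyperbolic} case where $|\lambda|<1=\mu$ or $\mu$ is otherwise not $>1$ may fail to give hyperbolic splitting. The usual stable-manifold machinery needs $\mu\neq 1$; here one only knows $|\lambda|<\mu$, so the contraction toward $\overline{\rm x}$ along the weak direction is not exponential and must be extracted purely from $|\lambda|<1$ plus the competitive (monotone) structure, which is exactly why the order-theoretic framework, rather than a direct fixed-point/graph-transform estimate, is essential. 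The rest is bookkeeping with nested increasing arcs.
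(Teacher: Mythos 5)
Your overall architecture --- a local invariant arc from the Poincar\'e--Hadamard--Sternberg lemma, globalized by pulling back under $T$ and controlled by the order structure --- is the same as the paper's, and your worry in the last paragraph is misplaced: Hartman's Lemma 5.1 requires only $0<|\lambda|<\mu$ with $|\lambda|<1$, not $\mu\neq 1$, so the local curve $\hat{\mathcal{C}}$ tangent to $E^\lambda$ on which iterates converge to $\overline{\rm x}$ comes for free even in the non-hyperbolic case. The genuine gap is in the globalization step. You extend the local arc by writing $S_0\subseteq T^{-1}(S_0)\cap\mathcal{R}\subseteq T^{-2}(S_0)\cap\mathcal{R}\subseteq\cdots$ and invoking ``a well-defined increasing inverse branch,'' but the inverse branch you construct is an inverse of $T$ restricted to the one-dimensional arc $S_0$, which only reproduces points of $S_0$ itself; the set $T^{-1}(S_0)\cap\mathcal{R}$ is the full preimage under a possibly non-invertible planar map, and nothing you say shows it is connected, is an arc, or actually extends the graph beyond its current endpoints. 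The paper instead defines $\mathcal{C}$ as the connected component (containing $\hat{\mathcal{C}}$) of $\{{\rm x}\in\mathcal{R}\cap\mathcal{Q}_1(\overline{\rm x}):\exists n,\ T^n({\rm x})\in\hat{\mathcal{C}}\}$, proves it is totally unordered for $\preceq_{se}$ using \emph{strong} competitiveness applied to preimages (your ``increasing graphs are preserved'' lemma goes the wrong way: you need that comparable points cannot both map into the unordered set $\hat{\mathcal{C}}$), and then carries out an explicit extension argument: near a non-fixed interior endpoint ${\rm x}_0$ one takes short vertical segments $L_t$ to the right of ${\rm x}_0$, notes each $T(L_t)$ is linearly ordered with endpoints on opposite sides of $\mathcal{C}$, and concludes by an intermediate-value argument that each $L_t$ meets $T^{-1}(\mathcal{C})$, so the graph extends --- contradicting maximality of the component. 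That argument is the technical heart of the proof and has no counterpart in your sketch.

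The same omission undermines your endpoint classification. You correctly observe that if the endpoint set is invariant then it consists of fixed points or a minimal period-two orbit, but the substance is precisely \emph{why} an interior endpoint cannot map to a non-endpoint of $\mathcal{C}$ (or to $\overline{\rm x}$ itself); your paragraph on this is circular, assuming that a period-two point ``must be an endpoint of every pre-segment.'' In the paper this is exactly where the extension argument is deployed (if $T({\rm x}_0)\in\mathcal{C}\cap\Delta$ and $T({\rm x}_0)\neq{\rm x}_0$ the curve extends, contradiction), supplemented by a separate short argument, using strong competitiveness and $\overline{\rm x}\in\partial\mathcal{R}$, that $T({\rm x})=\overline{\rm x}$ has no solution in $\Delta$. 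You would need to supply both of these to close the proof.
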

We shall see in Theorem \ref{th: basins}
and in the Section \ref{sec: Applications} examples that
the situation where
 the endpoints of $\mathcal{C}$
are boundary points of $\mathcal{R}$
is of interest.
The following result gives a sufficient condition for this case.
\begin{theorem}
\label{th: suff cond for boundary endpoints}
\label{cor: cor to invariant curve}
For the curve $\mathcal{C}$ of Theorem \ref{th: invariant curve}
to have endpoints in $\partial \mathcal{R}$,
it is sufficient that at least one
of the following conditions is satisfied.
\begin{itemize}
\item[\rm i.]
The map $T$ has no fixed points nor periodic points of minimal period-two in
$\Delta$.
\item[\rm ii.]
The map $T$ has no fixed points in $\Delta$,
$\det \, J_T(\overline{\rm x}) > 0$, and
$T({\rm x}) = \overline{\rm x}$ has no solutions ${\rm x} \in \Delta$.
\item[\rm iii.]
The map $T$ has no points of minimal period-two in $\Delta$,
$\det \, J_T(\overline{\rm x}) < 0$, and
$T({\rm x}) = \overline{\rm x}$ has no solutions ${\rm x} \in \Delta$.
\end{itemize}
\end{theorem}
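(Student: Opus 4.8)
The plan is to derive everything from the endpoint dichotomy already established in Theorem~\ref{th: invariant curve}: any endpoint of $\mathcal{C}$ that lies in ${\rm int}\,\mathcal{R}$ is either a fixed point of $T$ or a member of a minimal period-two orbit whose second member is also an endpoint of $\mathcal{C}$. Write $\mathcal{C}=\{(t,\phi(t)):t\in I\}$ with $\phi$ continuous and strictly increasing, and let $x_0\in I$ be the first coordinate of $\overline{\rm x}$. Since $\mathcal{C}$ passes through $\overline{\rm x}$, the point $\overline{\rm x}$ is not an endpoint of $\mathcal{C}$, so $x_0$ is interior to $I$; hence any endpoint ${\rm p}\neq\overline{\rm x}$ of $\mathcal{C}$ lies strictly to the northeast or strictly to the southwest of $\overline{\rm x}$, and therefore such an endpoint lying in ${\rm int}\,\mathcal{R}$ belongs to $\Delta$. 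This settles (i) at once: an endpoint in ${\rm int}\,\mathcal{R}$ would be a point of $\Delta$ that is either a fixed point or a minimal period-two point, contrary to hypothesis, so both endpoints lie in $\partial\mathcal{R}$.

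For (ii) and (iii) I would analyze the map induced by $T$ on $\mathcal{C}$. Because $\mathcal{C}$ is invariant, $h(t):=\bigl(T(t,\phi(t))\bigr)_1$ is a well-defined continuous function of $t\in I$ with $h(x_0)=x_0$. Since $T$ is $C^1$ near $\overline{\rm x}$, $\mathcal{C}$ is tangential to $E^\lambda$ there by Theorem~\ref{th: invariant curve}, and $J_T(\overline{\rm x})$ restricts to multiplication by $\lambda$ on $E^\lambda$, the expansion $T({\rm z})-\overline{\rm x}=J_T(\overline{\rm x})({\rm z}-\overline{\rm x})+o(|{\rm z}-\overline{\rm x}|)$ gives
\[
{\rm sgn}\bigl(h(t)-x_0\bigr)={\rm sgn}(\lambda)\cdot{\rm sgn}(t-x_0)\qquad\text{for }t\neq x_0\text{ near }x_0.
\]
Moreover $0<|\lambda|<\mu$ forces $\mu>0$, so ${\rm sgn}\,\det J_T(\overline{\rm x})={\rm sgn}(\lambda\mu)={\rm sgn}(\lambda)$.

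Now suppose, for contradiction, that some endpoint ${\rm q}$ of $\mathcal{C}$ lies in ${\rm int}\,\mathcal{R}$; say ${\rm q}$ is the northeast endpoint, at parameter $\beta=\sup I$ (the southwest case is symmetric), and extend $h$ to $\beta$ by letting $h(\beta)$ be the first coordinate of $T({\rm q})$, so that $h$ is continuous on $(x_0,\beta]$. In case (ii), $\det J_T(\overline{\rm x})>0$ gives $\lambda>0$, and by Theorem~\ref{th: invariant curve} ${\rm q}$ is either a fixed point — impossible, since ${\rm q}\in\Delta$ and $T$ has no fixed point in $\Delta$ — or a minimal period-two point whose partner is the southwest endpoint, so that $h(\beta)$ is the first coordinate of that endpoint, hence $h(\beta)<x_0$; on the other hand $h(t)>x_0$ for $t>x_0$ near $x_0$. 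In case (iii), $\det J_T(\overline{\rm x})<0$ gives $\lambda<0$, and ${\rm q}$ is either a minimal period-two point — impossible, since ${\rm q}\in\Delta$ — or a fixed point, so that $h(\beta)=\beta>x_0$, while $h(t)<x_0$ for $t>x_0$ near $x_0$. In either case $h$ assumes values on both sides of $x_0$ on $(x_0,\beta)$, so by the intermediate value theorem there is $s\in(x_0,\beta)$ with $h(s)=x_0$; since $T(s,\phi(s))\in\mathcal{C}$ and its first coordinate is $x_0$, we get $T(s,\phi(s))=\overline{\rm x}$, and $(s,\phi(s))\in\Delta$ is distinct from $\overline{\rm x}$. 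This contradicts the hypothesis in (ii) (resp. (iii)) that $T({\rm x})=\overline{\rm x}$ has no solution ${\rm x}\in\Delta$. Hence no endpoint of $\mathcal{C}$ lies in ${\rm int}\,\mathcal{R}$, which proves (ii) and (iii).

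The only step that goes beyond bookkeeping is the displayed sign identity, that is, the statement that the side of $\overline{\rm x}$ to which $T$ moves a nearby point of $\mathcal{C}$ is controlled by the sign of $\lambda$ (equivalently of $\det J_T(\overline{\rm x})$); I expect the verification of this — using the tangency of $\mathcal{C}$ to $E^\lambda$ from Theorem~\ref{th: invariant curve} together with the first-order behaviour of the $C^1$ map $T$ at $\overline{\rm x}$, and the fact that $E^\lambda$ is not a coordinate axis so that $\mathcal{C}$ is a genuine increasing graph with tangent line $E^\lambda$ — to be the main, though not difficult, point. Everything else is the reduction to the dichotomy of Theorem~\ref{th: invariant curve}, the remark that interior endpoints lie in $\Delta$, and a single intermediate-value crossing.
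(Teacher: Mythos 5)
Your argument is correct and rests on the same three ingredients as the paper's proof: the endpoint dichotomy of Theorem \ref{th: invariant curve}, the sign of $\lambda$ read off from $\mathrm{sgn}\,\det J_T(\overline{\rm x})=\mathrm{sgn}(\lambda\mu)$ with $\mu>0$ together with the tangency of $\mathcal{C}$ to $E^\lambda$, and the hypothesis that $\overline{\rm x}$ has no preimage in $\Delta$ combined with connectedness of $\mathcal{C}$. Your intermediate-value formulation via the induced one-dimensional map $h$ is just the contrapositive packaging of the paper's claim that $\mathcal{C}\cap\mathcal{Q}_1(\overline{\rm x})$ and $\mathcal{C}\cap\mathcal{Q}_3(\overline{\rm x})$ are each invariant, so this is essentially the paper's proof.
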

In many cases one can expect the curve $\mathcal{C}$ to be smooth.
\begin{theorem}
\label{th: smooth manifold}
Under the hypotheses of Theorem \ref{th: invariant curve}, suppose there exists
a neighborhood $\mathcal{U}$ of $\overline{\rm x}$ in $\mathbb{R}^2$
such that $T$ is of class $C^k$ on $\mathcal{U}\cup \Delta$ for some $k\geq 1$,
and that the Jacobian of $T$ at each ${\rm x}\in \Delta$
is invertible.
Then the curve $\mathcal{C}$  in the conclusion of
Theorem \ref{th: invariant curve} is of class $C^k$.
\end{theorem}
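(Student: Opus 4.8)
The plan is to first show that $\mathcal{C}$ is of class $C^k$ on a relatively open subarc containing $\overline{\rm x}$, and then to propagate this regularity along all of $\mathcal{C}$ by pulling back with iterates of $T$. For the local step, recall from hypothesis~(b) of Theorem~\ref{th: invariant curve} that the eigenvalues are real and distinct ($|\lambda|<\mu$), with $0<|\lambda|<1$. Hence the classical Poincar\'e--Hadamard--Sternberg lemma (Lemma~5.1, p.~234 of \cite{Hartman}), or equivalently the $C^k$ stable (resp.\ strong--stable) manifold theorem at $\overline{\rm x}$ — which applies whether $\mu>1$, $\mu=1$, or $\mu<1$, in the last case because $E^\lambda$ is the strong--stable direction ($|\lambda|<\mu$) — produces a local invariant curve $\mathcal{L}$ of class $C^k$ through $\overline{\rm x}$, tangent to $E^\lambda$, characterized as the set of points in a small neighborhood $V$ of $\overline{\rm x}$ whose forward orbit stays in $V$ and tends to $\overline{\rm x}$ at the exponential rate $|\lambda|$. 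On the other hand, $\mathcal{C}$ is invariant, tangent to $E^\lambda$ at $\overline{\rm x}$, and contained in the basin of attraction of $\overline{\rm x}$; since $\mathcal{C}$ is differentiable at $\overline{\rm x}$ with tangent $E^\lambda$, the one--dimensional map induced by $T$ on $\mathcal{C}$ in the graph parametrization has derivative $\lambda$ at $\overline{\rm x}$, so points of $\mathcal{C}$ sufficiently near $\overline{\rm x}$ have forward orbit remaining near and converging to $\overline{\rm x}$ at rate $O(|\lambda|^n)$, hence lie in $\mathcal{L}$. Therefore a neighborhood of $\overline{\rm x}$ in $\mathcal{C}$ is a connected subset of the embedded $C^k$ curve $\mathcal{L}$, so it is a $C^k$ arc; call it $\mathcal{C}_0$.

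Next I observe that $T$ is a local $C^k$ diffeomorphism at every point of $\mathcal{C}$. Indeed, since $\mathcal{C}$ is the graph of a strictly increasing continuous function through $\overline{\rm x}$, each point of $\mathcal{C}\setminus\{\overline{\rm x}\}$ lies in ${\rm int}\,\mathcal{Q}_1(\overline{\rm x})\cup {\rm int}\,\mathcal{Q}_3(\overline{\rm x})\subseteq {\rm int}(\mathcal{Q}_1(\overline{\rm x})\cup\mathcal{Q}_3(\overline{\rm x}))$, and as $\mathcal{C}\subseteq\mathcal{R}$ this gives $\mathcal{C}\setminus\{\overline{\rm x}\}\subseteq\Delta$, while $\overline{\rm x}\in\mathcal{U}$; thus $\mathcal{C}\subseteq\mathcal{U}\cup\Delta$. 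By hypothesis $T$ is $C^k$ on $\mathcal{U}\cup\Delta$ with $J_T$ invertible on $\Delta$, and $J_T(\overline{\rm x})$ is invertible since $\lambda\mu\neq0$; the inverse function theorem gives the claim, and consequently every finite iterate $T^n$ is a local $C^k$ diffeomorphism at each point of $\mathcal{C}$, being a composition of local $C^k$ diffeomorphisms along a forward orbit that stays in $\mathcal{C}$. Now let ${\rm x}\in\mathcal{C}$. Because $\mathcal{C}$ lies in the basin of attraction of $\overline{\rm x}$, we have $T^n{\rm x}\to\overline{\rm x}$, so $T^n{\rm x}\in\mathcal{C}_0$ for some $n$. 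Choose a neighborhood $W$ of $T^n{\rm x}$ small enough that $\mathcal{C}\cap W$ is a single $C^k$ arc inside $\mathcal{C}_0$ and that the local inverse $(T^n)^{-1}$ is defined and $C^k$ on $W$; then $A:=(T^n)^{-1}(\mathcal{C}\cap W)$ is an embedded $C^k$ arc near ${\rm x}$. Forward invariance $T^n(\mathcal{C})\subseteq\mathcal{C}$ together with continuity of $T^n$ implies that some relatively open connected neighborhood $N$ of ${\rm x}$ in $\mathcal{C}$ satisfies $T^n(N)\subseteq\mathcal{C}\cap W$, i.e.\ $N\subseteq A$. Being a nondegenerate connected subset of the embedded $C^k$ one--manifold $A$, $N$ is a $C^k$ arc, so $\mathcal{C}$ is of class $C^k$ at ${\rm x}$; since ${\rm x}$ was arbitrary, $\mathcal{C}$ is of class $C^k$.

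The globalization in the second paragraph is routine: it uses only the invertibility of $J_T$ on $\Delta$ and the fact that orbits on $\mathcal{C}$ converge to $\overline{\rm x}$. The crux is the local step — knowing that $\mathcal{C}$ coincides near $\overline{\rm x}$ with a genuinely smooth invariant manifold. When $\mu>1$ this is the ordinary stable manifold and the identification is immediate, but when $\mu\le1$ — in particular in the non--hyperbolic case $\mu=1$ and the sink case $0<|\lambda|<\mu<1$ relevant to the applications — $E^\lambda$ is a fast direction, and one must invoke the uniqueness of the strong--stable manifold together with the precise $O(|\lambda|^n)$ decay of orbits on $\mathcal{C}$ (a consequence of the tangency of $\mathcal{C}$ to $E^\lambda$ at $\overline{\rm x}$) to rule out that $\mathcal{C}$ departs from the smooth manifold. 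If instead one uses the construction of $\mathcal{C}$ carried out in the proof of Theorem~\ref{th: invariant curve} — obtaining $\mathcal{C}$ by pulling back the Poincar\'e--Hadamard--Sternberg local curve under iterates of $T$ — this local step is automatic, and only the bookkeeping of the second paragraph remains.
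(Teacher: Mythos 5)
Your proposal is correct and follows essentially the same route as the paper: establish $C^k$ regularity of $\mathcal{C}$ on a neighborhood of $\overline{\rm x}$ via Hartman's Lemma 5.1 (and Exercise 5.1(a)(ii) for the $C^k$ statement), then propagate along $\mathcal{C}$ by pulling back with local $C^k$ inverses of the iterates $T^n$, which exist because $J_T$ is invertible on $\Delta$ and at $\overline{\rm x}$. The extra care you take in identifying $\mathcal{C}$ with the local smooth invariant curve near $\overline{\rm x}$ addresses a point the paper leaves implicit, and as you note it is automatic from the construction of $\mathcal{C}$ in the proof of Theorem \ref{th: invariant curve}.
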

In applications it is common to have rectangular domains $\mathcal{R}$ for competitive maps.
If  a competitive map exist has several fixed points,
often the domain of the map may be split into rectangular invariant subsets such that
 Theorem \ref{th: invariant curve} could be applied to the restriction of the map
 to one or more subsets.
For maps that are strongly competitive near the fixed point,
hypothesis b. of Theorem \ref{th: invariant curve}
reduces just to $|\lambda | <1$.
This  follows  from a change
of variables  \cite{Sm1} that allows the Perron-Frobenius Theorem to be applied to give that
at any point,  the Jacobian of a strongly competitive map has two real
and distinct eigenvalues, the larger one in absolute value being
positive, and  that
corresponding eigenvectors may be chosen to point in
the direction of the second and first quadrant, respectively.
Also, one can show that in such case
no associated eigenvector is aligned with a coordinate axis.

Lemma 5.1  in page 234 of \cite{Hartman} is a local version of Theorem \ref{th: invariant curve}
for (not necessarily competitive) $C^1$ planar maps, and it
gives the existence of a $C^1$ local curve  $\mathcal{C}$ as
in Theorem \ref{th: invariant curve}.
%
%
In the case when the map is a diffeomorphism, H. L. Smith's results give the conclusions of
Theorem \ref{th: invariant curve}, see Remark 5 in \cite{S1986SIAM}.
The proof of a result analogous to
Theorem \ref{th: invariant curve} was given in Theorem 5 of
\cite{KUM1}
in the hyperbolic case when the equilibrium is a saddle point,
but a key feature of Theorem  \ref{th: invariant curve}
 is that the equilibrium may be {\it non-hyperbolic}.
 Theorem \ref{th: invariant curve} refines and extends Theorem 5
from \cite{KUM1} in that it only requires smoothness of the map
in a neighborhood of the fixed point, it relaxes the hypothesis
that the fixed point is a saddle, and it removes other hypotheses.

The next result is useful
for determining basins of attraction of fixed points
of competitive maps.
If ${\rm x} \in \mathbb{R}^2$, we denote with $\mathcal{Q}_{\ell}({\rm x})$,
$\ell \in \{1,2,3,4\}$,
the  four quadrants in ${\mathbb{R}}^2$
relative to ${\rm x}$, i.e.,
$\mathcal{Q}_{1}(x,y) =
\{\ (u, v)\in  {\mathbb{R}}^2\ :\ u \geq x,\ v \geq y\ \}$,
$\mathcal{Q}_{2}(x,y) = \{\ (u, v)\in  {\mathbb{R}}^2\  :\ x \geq u,\ v \geq y\ \}$, and so on.
Define the {\it South-East} partial order $\preceq_{se}$ on $\mathbb{R}^2$ by
$(x,y) \preceq_{se} (s,t)$ if and only if $x \leq s$ and $y \geq t$.
For $\mathcal{A}\subset \mathbb{R}^2$ and ${\rm x} \in \mathbb{R}^2$, define
the {\it distance from ${\rm x}$ to $\mathcal{A}$} as
${\rm dist}({\rm x},\mathcal{A}):=\inf \,\{\|{\rm x}-{\rm y}\|\,:{\rm y} \in \mathcal{A}\}$.
\begin{theorem}
\label{th: basins}
{\rm (A)} Asume the hypotheses of Theorem \ref{th: invariant curve},
and let $\mathcal{C}$ be the curve whose existence is guaranteed by Theorem \ref{th: invariant curve}.
If the endpoints of $\mathcal{C}$ belong to $\partial \mathcal{R}$,
then
$\mathcal{C}$ separates $\mathcal{R}$ into two connected components,
namely
\begin{equation}
\mathcal{W}_- := \{ {\rm x} \in \mathcal{R}\setminus \mathcal{C} :
\exists {\rm y} \in \mathcal{C} \mbox{ with } {\rm x} \preceq_{se} {\rm y} \}
\quad \mbox{\rm and} \quad
\mathcal{W}_+ := \{ {\rm x} \in \mathcal{R}\setminus \mathcal{C} :
\exists {\rm y} \in \mathcal{C} \mbox{ with } {\rm y} \preceq_{se} {\rm x} \}\, ,
\end{equation}
such that the following statements are true.
\begin{itemize}
\item[\rm (i)]  $\mathcal{W}_-$ is invariant, and
$\mbox{\rm dist}(T^n({\rm x}),\mathcal{Q}_2(\overline{{\rm x}}))\rightarrow 0$
as $n \rightarrow \infty$
for every ${\rm x} \in \mathcal{W}_-$.
\item[\rm (ii)]  $\mathcal{W}_+$ is invariant, and
$\mbox{\rm dist}(T^n({\rm x}),\mathcal{Q}_4(\overline{{\rm x}}))\rightarrow 0$
as $n \rightarrow \infty$ for every ${\rm x} \in \mathcal{W}_+$.
\end{itemize}
{\rm (B)} If, in addition to the hypotheses of part {\rm (A)},  $\overline{{\rm x}}$ is an interior point of $\mathcal{R}$
and $T$ is $C^2$ and strongly competitive in a neighborhood of $\overline{{\rm x}}$,
then $T$ has no periodic points in the boundary of
$\mathcal{Q}_1(\overline{{\rm x}})\cup \mathcal{Q}_3(\overline{{\rm x}})$
except for $\overline{{\rm x}}$, and the following statements are true.
\begin{itemize}
\item[\rm (iii)]
For every ${\rm x} \in \mathcal{W}_-$ there exists $n_0 \in \mathbb{N}$
such that $T^n({\rm x}) \in {\rm int}\,\mathcal{Q}_2(\overline{{\rm x}})$ for $n\geq n_0$.
\item[\rm (iv)]
For every ${\rm x} \in  \mathcal{W}_+$ there exists $n_0 \in \mathbb{N}$
such that $T^n({\rm x}) \in {\rm int}\, \mathcal{Q}_4(\overline{{\rm x}})$ for $n\geq n_0$.
\end{itemize}
\end{theorem}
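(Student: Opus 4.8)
The plan is to exploit the order structure inherited from the increasing curve $\mathcal{C}$ together with the competitiveness of $T$. First I would establish the topological claim that $\mathcal{C}$ separates $\mathcal{R}$ into exactly the two pieces $\mathcal{W}_-$ and $\mathcal{W}_+$: since $\mathcal{C}$ is the graph of a continuous strictly increasing function on an interval, with endpoints on $\partial\mathcal{R}$, its complement in $\mathcal{R}$ has exactly two connected components, and one checks directly from the definitions of $\preceq_{se}$ that these are precisely $\mathcal{W}_-$ (the points lying ``above-left'' of the graph) and $\mathcal{W}_+$ (the points ``below-right''). I would note here that for ${\rm x} \in \mathcal{W}_+$ one has ${\rm y} \preceq_{se} {\rm x}$ for the unique ${\rm y}\in\mathcal{C}$ on the same vertical (or horizontal) line, and symmetrically for $\mathcal{W}_-$.

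Next, for invariance of $\mathcal{W}_+$ (part (ii); $\mathcal{W}_-$ being symmetric), the key observation is that a competitive map on the plane is \emph{monotone with respect to $\preceq_{se}$}: if ${\rm u}\preceq_{se}{\rm v}$ then $T({\rm u})\preceq_{se}T({\rm v})$. Given ${\rm x}\in\mathcal{W}_+$, pick ${\rm y}\in\mathcal{C}$ with ${\rm y}\preceq_{se}{\rm x}$; applying $T$ and using invariance of $\mathcal{C}$ gives $T({\rm y})\in\mathcal{C}$ with $T({\rm y})\preceq_{se}T({\rm x})$, and since $T({\rm x})\notin\mathcal{C}$ (as $\mathcal{C}$ is invariant and $\mathcal{C}\cap\mathcal{W}_+=\emptyset$, one needs a small argument that $T$ does not map $\mathcal{W}_+$ onto $\mathcal{C}$ — this follows because $\overline{\rm x}$ attracts $\mathcal{C}$ while points of $\mathcal{W}_+$ cannot be in the basin of $\overline{\rm x}$ via $\mathcal{C}$, or more cleanly from strong competitiveness pushing $T(\mathcal{W}_+)$ strictly into the interior), we conclude $T({\rm x})\in\mathcal{W}_+$. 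For the convergence statement, I would show that the $\preceq_{se}$-order forces the forward orbit of ${\rm x}\in\mathcal{W}_+$ to satisfy ${\rm y}\preceq_{se}T^n({\rm x})$ for a point ${\rm y}\in\mathcal{C}$; combined with the fact (established in the proof of Theorem \ref{th: invariant curve}) that orbits starting in the appropriate cone relative to $\overline{\rm x}$ are asymptotic to $\mathcal{Q}_4(\overline{\rm x})$ — essentially because the non-$\mathcal{C}$ side is squeezed between $\mathcal{C}\to\overline{\rm x}$ and the quadrant $\mathcal{Q}_4(\overline{\rm x})$ — one gets ${\rm dist}(T^n({\rm x}),\mathcal{Q}_4(\overline{\rm x}))\to 0$.

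For part (B), the added hypotheses ($\overline{\rm x}$ interior, $T$ of class $C^2$ and strongly competitive near $\overline{\rm x}$) should be used to rule out periodic points on $\partial(\mathcal{Q}_1(\overline{\rm x})\cup\mathcal{Q}_3(\overline{\rm x}))$ other than $\overline{\rm x}$: along the boundary rays through $\overline{\rm x}$ one compares with the linearization, using that $E^\lambda$ is not a coordinate axis and strong competitiveness gives strict monotonicity of $T$ relative to $\preceq_{se}$, which prevents a nontrivial period-two point or fixed point from sitting on a coordinate-parallel line through $\overline{\rm x}$; a linearization/Perron--Frobenius argument as invoked after Theorem \ref{th: invariant curve} handles the local picture. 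Once no such periodic points exist, (iii) and (iv) follow from (i) and (ii) by an absorption argument: an orbit in $\mathcal{W}_+$ accumulates on $\mathcal{Q}_4(\overline{\rm x})$, cannot stay on its boundary (no periodic points there, and strong competitiveness opens the orbit into the interior after one step), hence must eventually enter ${\rm int}\,\mathcal{Q}_4(\overline{\rm x})$ and, by invariance of that quadrant under the $\preceq_{se}$-monotone map together with strong competitiveness, remain there.

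The main obstacle I anticipate is the rigorous handling of the convergence and the eventual-interior statements: making precise why an orbit trapped between the invariant curve $\mathcal{C}$ (which converges to $\overline{\rm x}$) and the quadrant $\mathcal{Q}_4(\overline{\rm x})$ must itself converge to that quadrant, and ruling out the degenerate possibility that an orbit creeps along $\partial\mathcal{Q}_4(\overline{\rm x})$ forever. This requires combining the monotonicity of orbits in the $\preceq_{se}$ order (so that coordinates are eventually monotone, hence convergent to some limit point, which must be a fixed or period-two point) with the nonexistence-of-periodic-points conclusions and the strong competitiveness near $\overline{\rm x}$; I would lean on the machinery already built in the proof of Theorem \ref{th: invariant curve} rather than redeveloping it.
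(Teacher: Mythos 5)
Part (A) of your plan is essentially the paper's argument: the separation claim is handled topologically via the endpoints of $\mathcal{C}$ on $\partial\mathcal{R}$, and invariance plus the distance statements in (i)--(ii) follow exactly as you say, by choosing ${\rm y}\in\mathcal{C}$ comparable to ${\rm x}$, iterating the order relation $T^n({\rm x})\preceq_{se}T^n({\rm y})$, and using $T^n({\rm y})\to\overline{\rm x}$. That part is fine.

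There is, however, a genuine gap in your treatment of (iii)--(iv). Your ``absorption argument'' assumes that an orbit whose distance to $\mathcal{Q}_2(\overline{\rm x})$ tends to $0$ must either enter $\mathcal{Q}_2(\overline{\rm x})$ or accumulate on its boundary at a non-fixed point, which the no-periodic-points observation would then exclude. But the dangerous scenario is different: the orbit can remain in ${\rm int}\,\mathcal{Q}_1(\overline{\rm x})$ for all $n$ and converge to $\overline{\rm x}$ itself, which \emph{is} a fixed point on that boundary; then ${\rm dist}(T^n({\rm x}),\mathcal{Q}_2(\overline{\rm x}))\to 0$ holds while the orbit never enters $\mathcal{Q}_2(\overline{\rm x})$. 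Nothing in part (A), and nothing in the proof of Theorem \ref{th: invariant curve}, rules this out --- indeed ruling it out is the entire content of part (B), and it is where the hypotheses ``$C^2$'' and ``strongly competitive near $\overline{\rm x}$'' are actually consumed. The paper first proves (its Claim 1) that such a trapped orbit would have to converge to $\overline{\rm x}$, then passes to $C^2$ linearizing coordinates (Hartman, Lemma 5.1 and Exercise 5.1) in which $\mathcal{C}$ becomes the $x$-axis, and proves a quantitative wedge estimate: for any $m>0$ and any $q\in(1,\mu/\lambda)$, points in the wedge $\{0<y<mx\}$ near the origin satisfy $\tan\theta(\hat{T}(x,y))>q\,\tan\theta(x,y)$. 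Geometric growth of the polar angle forces the orbit out of every wedge around the stable direction, contradicting confinement to $\mathcal{Q}_1(\overline{\rm x})$. Your proposal contains no counterpart to this estimate, and ``leaning on the machinery already built in the proof of Theorem \ref{th: invariant curve}'' will not supply it, since that proof establishes only the existence and monotonicity of $\mathcal{C}$. A secondary issue: your appeal to eventual componentwise monotonicity of orbits (to get convergence to a fixed or period-two point) is not available here, since conditions $(O+)$ or $(O-)$ are not among the hypotheses of this theorem.
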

Basins of attraction of {\it period-two solutions}
or period-two orbits  of certain systems or maps can be
effectively treated with Theorems \ref{th: invariant curve} and  \ref{th: basins}.
See \cite{KUM1, KUM2, KN4} for the hyperbolic case,
for the non-hyperbolic case see
Example \ref{ex: period two} in Section \ref{sec: Applications},
and reference \cite{BKK1}.

If $T$ is a map on a set $\mathcal{R}$ and if
$\overline{\rm x}$ is a fixed point of $T$, the
{\it stable set} $\mathcal{W}^s(\overline{\rm x})$
of   $\overline{\rm x}$ is the set
$\{ x \in \mathcal{R}: T^n({\rm x}) \rightarrow \overline{\rm x} \}$
and {\it unstable set} $\mathcal{W}^u(\overline{\rm x})$
of   $\overline{\rm x}$ is the set
$$
\left\{ \ {\rm x} \in \mathcal{R}:
\mbox{there exists } \{{\rm x}_n\}_{n=-\infty}^{0} \subset \mathcal{R} \mbox{ s.t. }
T({\rm x}_n)={\rm x}_{n+1} ,\   {\rm x}_0 = {\rm x}, \mbox{ and }
\lim_{n\rightarrow -\infty} {\rm x}_n = \overline{\rm x}
  \ \right\}
$$
When $T$ is non-invertible,
the set $\mathcal{W}^s(\overline{\rm x})$ may not be connected
and made up of infinitely many curves,
or $\mathcal{W}^u(\overline{\rm x})$ may not be a manifold.
The following result gives a description of
the  stable and unstable sets of a saddle point of a
competitive map.
If the map is a diffeomorphism on $\mathcal{R}$, the  sets
$\mathcal{W}^s(\overline{\rm x})$ and $\mathcal{W}^u(\overline{\rm x})$
are the stable and unstable manifolds of $\overline{x}$.
\begin{theorem}
\label{th: saddle point}
In addition to the hypotheses of part (B) of Theorem \ref{th: basins},
suppose that $\mu>1$ and that the eigenspace $E^\mu$ associated with $\mu$
is not a coordinate axis.
If the curve $\mathcal{C}$ of Theorem \ref{th: invariant curve}
has endpoints in $\partial \mathcal{R}$, then $\mathcal{C}$
is the stable set
$\mathcal{W}^s(\overline{\rm x})$ of   $\overline{\rm x}$, and
the unstable set $\mathcal{W}^u(\overline{\rm x})$ of  $\overline{x}$
is a curve in $\mathcal{R}$ that is tangential to $E^\mu$ at
$\overline{\rm x}$ and such that it is the graph of a strictly
decreasing function of the first coordinate on an interval.
Any endpoints of $\mathcal{W}^u(\overline{\rm x})$
in   $\mathcal{R}$ are fixed points of $T$.
\end{theorem}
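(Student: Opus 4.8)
The plan rests on the observation that, once $\mu>1$ is added to $|\lambda|<1$, the fixed point $\overline{\rm x}$ is a \emph{hyperbolic} saddle, so the classical local stable/unstable manifold theorem applies. Since $\det J_T(\overline{\rm x})=\lambda\mu\neq 0$, $T$ restricts to a $C^2$ diffeomorphism on some open neighborhood $\mathcal{N}\subset\mathcal{U}$ of $\overline{\rm x}$, and on $\mathcal{N}$ this theorem produces $C^2$ curves $W^s_{loc},W^u_{loc}$ through $\overline{\rm x}$, tangent at $\overline{\rm x}$ to $E^\lambda$ and $E^\mu$, with $T(W^s_{loc})\subset W^s_{loc}$, $T(W^u_{loc})\supset W^u_{loc}$, $T$ contracting on $W^s_{loc}$, $T$ injective and expanding on $W^u_{loc}$, and with the usual local characterizations ($W^s_{loc}$ is precisely the set of ${\rm y}\in\mathcal{N}$ whose entire forward orbit stays in $\mathcal{N}$, and $W^u_{loc}$ the set of ${\rm y}\in\mathcal{N}$ admitting a backward orbit in $\mathcal{N}$). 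Because $T$ is strongly competitive on $\mathcal{U}$, the Perron--Frobenius argument recalled just after Theorem \ref{th: invariant curve} forces $E^\lambda$ into the open first and third quadrants and $E^\mu$ into the open second and fourth quadrants, so after shrinking $\mathcal{N}$ we may assume $W^s_{loc}$ is a strictly increasing graph with $W^s_{loc}\setminus\{\overline{\rm x}\}\subset{\rm int}\,\mathcal{Q}_1(\overline{\rm x})\cup{\rm int}\,\mathcal{Q}_3(\overline{\rm x})$, and $W^u_{loc}$ a strictly decreasing graph with $W^u_{loc}\setminus\{\overline{\rm x}\}\subset{\rm int}\,\mathcal{Q}_2(\overline{\rm x})\cup{\rm int}\,\mathcal{Q}_4(\overline{\rm x})$.

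To get $\mathcal{C}=\mathcal{W}^s(\overline{\rm x})$, one inclusion is already in Theorem \ref{th: invariant curve} (the curve $\mathcal{C}$ lies in the basin of $\overline{\rm x}$). For the other I would argue by contradiction: if ${\rm x}\in\mathcal{W}^s(\overline{\rm x})\setminus\mathcal{C}$, then since $\mathcal{C}$ has endpoints in $\partial\mathcal{R}$, Theorem \ref{th: basins}(A) gives $\mathcal{R}\setminus\mathcal{C}=\mathcal{W}_-\cup\mathcal{W}_+$ with both sets invariant, so ${\rm x}$ lies in one of them, say $\mathcal{W}_-$. By Theorem \ref{th: basins}(B)(iii) there is $n_0$ with $T^n({\rm x})\in{\rm int}\,\mathcal{Q}_2(\overline{\rm x})$ for all $n\ge n_0$. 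But $T^n({\rm x})\to\overline{\rm x}$, so for $n$ beyond some $n_1$ the full forward orbit of $T^n({\rm x})$ remains in $\mathcal{N}$, whence $T^n({\rm x})\in W^s_{loc}$; for $n\ge\max\{n_0,n_1\}$ this would place $T^n({\rm x})$ in $W^s_{loc}\cap{\rm int}\,\mathcal{Q}_2(\overline{\rm x})=\emptyset$, a contradiction. The case ${\rm x}\in\mathcal{W}_+$ is the same using Theorem \ref{th: basins}(B)(iv) and $W^s_{loc}\cap{\rm int}\,\mathcal{Q}_4(\overline{\rm x})=\emptyset$. Thus $\mathcal{W}^s(\overline{\rm x})\subset\mathcal{C}$. (That $T$ has no periodic points on $\partial(\mathcal{Q}_1(\overline{\rm x})\cup\mathcal{Q}_3(\overline{\rm x}))$ other than $\overline{\rm x}$ is already asserted in Theorem \ref{th: basins}(B).)

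For $\mathcal{W}^u(\overline{\rm x})$ I would first establish $\mathcal{W}^u(\overline{\rm x})=\bigcup_{n\ge 0}T^n(W^u_{loc})$: the inclusion $\supset$ holds since each point of $W^u_{loc}$ has, through iteration of the contracting local inverse $T^{-1}$ on $W^u_{loc}$, a backward orbit in $W^u_{loc}$ tending to $\overline{\rm x}$; and $\subset$ holds since any backward orbit of ${\rm x}\in\mathcal{W}^u(\overline{\rm x})$ tending to $\overline{\rm x}$ eventually stays in $\mathcal{N}$, hence in $W^u_{loc}$, making ${\rm x}$ a forward image of a point of $W^u_{loc}$. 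The sets $T^n(W^u_{loc})$ increase with $n$ and all contain $\overline{\rm x}$, so the union is connected, and since the two branches of $W^u_{loc}$ lie in the $\preceq_{se}$-forward-invariant quadrants $\mathcal{Q}_2(\overline{\rm x})$ and $\mathcal{Q}_4(\overline{\rm x})$, so does $\mathcal{W}^u(\overline{\rm x})$. Now competitiveness of $T$ is exactly monotonicity for $\preceq_{se}$ and strong competitiveness is strict monotonicity for $\preceq_{se}$; a strictly decreasing graph is a $\preceq_{se}$-chain, $T$ sends $\preceq_{se}$-chains to connected $\preceq_{se}$-chains, and strict monotonicity excludes vertical and horizontal subsegments in the image, so induction on $n$ shows each $T^n(W^u_{loc})$ is the graph of a strictly decreasing continuous function on an interval. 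The nested union is therefore such a graph on an interval, it equals $W^u_{loc}$ near $\overline{\rm x}$, and hence it is tangent there to $E^\mu$. Finally, from $W^u_{loc}\subset T(W^u_{loc})$ one gets $T(\mathcal{W}^u(\overline{\rm x}))=\mathcal{W}^u(\overline{\rm x})$, so $T$ maps $\mathcal{W}^u(\overline{\rm x})$ onto itself; since $T$ is $\preceq_{se}$-monotone and $\mathcal{W}^u(\overline{\rm x})$ is totally ordered by $\preceq_{se}$ (being a strictly decreasing graph), an endpoint ${\rm e}$ lying in $\mathcal{R}$ is the $\preceq_{se}$-largest or $\preceq_{se}$-smallest element, and then $T({\rm e})\preceq_{se}{\rm e}$ (by invariance) while $T({\rm e}')={\rm e}$ for some ${\rm e}'\in\mathcal{W}^u(\overline{\rm x})$ gives ${\rm e}=T({\rm e}')\preceq_{se}T({\rm e})$ (by monotonicity); antisymmetry yields $T({\rm e})={\rm e}$, and a non-attained endpoint in $\mathcal{R}$ is handled the same way by continuity of $T$. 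This gives all the conclusions.

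The step I expect to be the real obstacle is the \emph{strict} monotonicity of $\mathcal{W}^u(\overline{\rm x})$: the hypotheses guarantee strong competitiveness only on $\Delta$ and on a neighborhood of $\overline{\rm x}$, whereas $\mathcal{W}^u(\overline{\rm x})\subset\mathcal{Q}_2(\overline{\rm x})\cup\mathcal{Q}_4(\overline{\rm x})$ is disjoint from $\Delta$, so the induction above really needs strong competitiveness of $T$ all along $\mathcal{W}^u(\overline{\rm x})$; I would supply this from strong competitiveness of $T$ on $\mathcal{R}$ (as holds in the applications in Section \ref{sec: Applications}), noting that with only weak competitiveness one obtains merely that $\mathcal{W}^u(\overline{\rm x})$ is a non-increasing curve. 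The other point needing care is that $T$ may fail to be invertible away from $\overline{\rm x}$; this is circumvented by building $\mathcal{W}^u(\overline{\rm x})$ as $\bigcup_n T^n(W^u_{loc})$ rather than through branches of $T^{-1}$, after which the fact that $\mathcal{W}^u(\overline{\rm x})$ is a $\preceq_{se}$-chain carried onto itself by $T$ makes the remaining arguments routine.
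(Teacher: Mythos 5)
Your proposal is correct and follows essentially the same route as the paper: both invoke the local unstable manifold theorem at the now-hyperbolic saddle, realize $\mathcal{W}^u(\overline{\rm x})$ as the nested union $\bigcup_{n\geq 0} T^n(\mathcal{W}^u_{\rm loc})$, use the sign pattern of the eigenvector for $\mu$ to see that $\mathcal{W}^u_{\rm loc}$ is a $\preceq_{se}$-chain lying in $\mathcal{Q}_2(\overline{\rm x})\cup\mathcal{Q}_4(\overline{\rm x})$, and identify interior endpoints as fixed points by an order argument (the paper phrases this via subsolutions accumulating at the endpoint, you via antisymmetry of $\preceq_{se}$; the content is the same). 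The one place you genuinely diverge is the inclusion $\mathcal{W}^s(\overline{\rm x})\subseteq\mathcal{C}$: the paper first shows that points of $\mathcal{W}^u_{\rm loc}\cap\mathcal{Q}_2(\overline{\rm x})$ are subsolutions (a supersolution there would have a backward orbit contradicting the expansion property) and then traps $T^{n+k}({\rm x})$ on the wrong side of the diverging orbit of such a subsolution, whereas you use the characterization of $W^s_{\rm loc}$ as the set of points whose forward orbit never leaves a small neighborhood, together with $W^s_{\rm loc}\cap{\rm int}\,\mathcal{Q}_2(\overline{\rm x})=\emptyset$. Both arguments work; the paper's has the minor economy of reusing the subsolution fact it needs anyway for the endpoint claim, while yours is more self-contained given the standard stable manifold theorem. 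Finally, the obstacle you flag --- that strong competitiveness is hypothesized only on $\Delta$ and near $\overline{\rm x}$, while strictness of the decrease of $\mathcal{W}^u(\overline{\rm x})$ requires strict monotonicity along the whole curve inside $\mathcal{Q}_2(\overline{\rm x})\cup\mathcal{Q}_4(\overline{\rm x})$ --- is a fair one, but it is not a defect peculiar to your write-up: the paper's own proof passes from ``linearly ordered by $\preceq_{se}$'' directly to ``graph of a decreasing function,'' which, exactly as you note, only yields a non-increasing chain (possibly with horizontal or vertical pieces) unless $T$ is strongly competitive on the relevant region, as it is in all the applications of Section \ref{sec: Applications}.
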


The following result gives  information on {\it local} dynamics
near a fixed point of a map when there exists a characteristic vector
whose coordinates have negative product
and such that
the associated eigenvalue is hyperbolic.
A point $(x,y)$ is a {\it subsolution} if $T(x,y) \preceq_{se} (x,y)$, and $(x,y)$ is a
{\it supersolution} if $(x,y) \preceq_{se} T(x,y)$.
An {\it order interval}
$\llbracket (a,b),(c,d)\rrbracket$
is the cartesian product of the
two compact intervals $[a,c]$ and $[b,d]$.
\begin{theorem}
\label{th: 2 and 4 all cases}
Let $T$ be a competitive map on a rectangular set $ \mathcal{R}\subset \mathbb{R}^2$
with an isolated fixed point  $\overline{{\rm x}} \in \mathcal{R}$ such that
$ \mathcal{R}\cap \mbox{\rm int } (\mathcal{Q}_2(\overline{{\rm x}})\cup \mathcal{Q}_4(\overline{{\rm x}})) \neq \emptyset$.
Suppose  $T$ has a $C^1$ extension to a neighborhood of $\overline{\rm x}$.
Let ${\rm v} =({\rm v}^{(1)},{\rm v}^{(2)}) \in \mathbb{R}^2$ be an eigenvector of the Jacobian of $T$ at $\overline{{\rm x}}$,
with associated eigenvalue $\mu \in \mathbb{R}$.
If ${\rm v}^{(1)} {\rm v}^{(2)} < 0$,
then there exists an order interval $\mathcal{I}$ which is also a relative neighborhood of
$\overline{{\rm x}}$ such that
for every relative neighborhood $\mathcal{U} \subset \mathcal{I}$ of $\overline{{\rm x}}$
the following statements are true.
\begin{itemize}
\item[\rm i.]
If $\mu >1$, then
$\mathcal{U} \cap\,{\rm int}\,\mathcal{Q}_2(\overline{{\rm x}})$
contains a subsolution and
$\mathcal{U} \cap\,{\rm int}\,\mathcal{Q}_4(\overline{{\rm x}})$
contains a supersolution.
In this case
for every ${\rm x} \in \mathcal{I} \cap \mbox{\rm int} (\, \mathcal{Q}_2(\overline{{\rm x}})\cup \mathcal{Q}_4(\overline{{\rm x}})\,)$
 there exists $N$ such that
$T^n({\rm x}) \not \in \mathcal{I}$ for $n \geq N$.
\item[\rm ii.]
If $\mu <1$, then
$\mathcal{U} \cap\,{\rm int}\,\mathcal{Q}_2(\overline{{\rm x}})$
contains a supersolution and
$\mathcal{U} \cap\,{\rm int}\,\mathcal{Q}_4(\overline{{\rm x}})$
contains a subsolution.
In this case $T^n({\rm x}) \rightarrow \overline{{\rm x}}$ for every
${\rm x} \in \mathcal{I}$.
\end{itemize}
\end{theorem}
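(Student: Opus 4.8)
The plan is to build explicit sub- and supersolutions on the line through $\overline{\rm x}$ in the direction of ${\rm v}$, and then let the monotonicity of $T$ for the South-East order propagate the dynamics. After replacing ${\rm v}$ by $-{\rm v}$ if necessary, assume ${\rm v}^{(1)}>0>{\rm v}^{(2)}$, so that ${\rm v}$ points into ${\rm int}\,\mathcal{Q}_4(\overline{\rm x})$ and $-{\rm v}$ into ${\rm int}\,\mathcal{Q}_2(\overline{\rm x})$. Two facts are used repeatedly: since $\mathcal{R}$ is rectangular, a competitive map is order-preserving for $\preceq_{se}$ on all of $\mathcal{R}$ (if ${\rm z}\preceq_{se}{\rm w}$ then $T({\rm z})\preceq_{se}T({\rm w})$); and ${\rm z}\in\mathcal{Q}_2(\overline{\rm x})\Leftrightarrow{\rm z}\preceq_{se}\overline{\rm x}$, ${\rm z}\in\mathcal{Q}_4(\overline{\rm x})\Leftrightarrow\overline{\rm x}\preceq_{se}{\rm z}$. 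First I would produce the sub/supersolutions: from the $C^1$ expansion $T(\overline{\rm x}+{\rm h})=\overline{\rm x}+J_T(\overline{\rm x}){\rm h}+o(\|{\rm h}\|)$ with ${\rm h}=t\,{\rm v}$ and $J_T(\overline{\rm x}){\rm v}=\mu\,{\rm v}$,
$$
T(\overline{\rm x}+t\,{\rm v})-(\overline{\rm x}+t\,{\rm v})=(\mu-1)\,t\,{\rm v}+o(t)\qquad(t\to 0),
$$
so for small $t\ne0$ the two coordinates of the displacement have the signs of $(\mu-1)\,{\rm v}^{(1)}\,t$ and $(\mu-1)\,{\rm v}^{(2)}\,t$. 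Hence there is $t^*>0$ so that, if $\mu>1$, the point $\overline{\rm x}+t\,{\rm v}$ is a strict supersolution in ${\rm int}\,\mathcal{Q}_4(\overline{\rm x})$ for $0<t\le t^*$ and a strict subsolution in ${\rm int}\,\mathcal{Q}_2(\overline{\rm x})$ for $-t^*\le t<0$, while if $\mu<1$ the two roles are interchanged. Since these points accumulate at $\overline{\rm x}$, every relative neighborhood $\mathcal{U}$ of $\overline{\rm x}$ contains a supersolution in one of $\mathcal{Q}_2(\overline{\rm x}),\mathcal{Q}_4(\overline{\rm x})$ and a subsolution in the other, the assignment being dictated by the sign of $\mu-1$; this already gives the first assertion of i.\ and of ii.

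Next I would fix $\mathcal{I}$. Since $\overline{\rm x}$ is isolated, fix an order interval $\mathcal{I}\subset\mathcal{R}$ that is a relative neighborhood of $\overline{\rm x}$, contained in the $C^1$-neighborhood, and small enough to contain no fixed point of $T$ other than $\overline{\rm x}$. For the case $\mu<1$ choose $\mathcal{I}$, in addition, of the form $\{{\rm z}:{\rm q}\preceq_{se}{\rm z}\preceq_{se}{\rm p}\}$, where ${\rm q}\in{\rm int}\,\mathcal{Q}_2(\overline{\rm x})$ is a supersolution and ${\rm p}\in{\rm int}\,\mathcal{Q}_4(\overline{\rm x})$ a subsolution from the previous paragraph, with $|t|$ so small that this rectangle lies in $\mathcal{R}$; then ${\rm q}\preceq_{se}\overline{\rm x}\preceq_{se}{\rm p}$ and $\overline{\rm x}$ lies in the relative interior of $\mathcal{I}$. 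If $\mu<1$, then for every ${\rm z}\in\mathcal{I}$ the relations ${\rm q}\preceq_{se}T({\rm q})$, $T({\rm p})\preceq_{se}{\rm p}$ together with monotonicity give ${\rm q}\preceq_{se}T({\rm q})\preceq_{se}T({\rm z})\preceq_{se}T({\rm p})\preceq_{se}{\rm p}$, so $\mathcal{I}$ is invariant; iterating, $\{T^n({\rm q})\}$ is $\preceq_{se}$-nondecreasing and $\{T^n({\rm p})\}$ is $\preceq_{se}$-nonincreasing, both remain in the compact set $\mathcal{I}$, hence converge, and their limits are fixed points of $T$ lying in $\mathcal{I}$, hence both equal $\overline{\rm x}$. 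For an arbitrary ${\rm z}\in\mathcal{I}$ the squeeze $T^n({\rm q})\preceq_{se}T^n({\rm z})\preceq_{se}T^n({\rm p})$ then forces $T^n({\rm z})\to\overline{\rm x}$, which is ii.

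Suppose now $\mu>1$ and fix ${\rm x}\in\mathcal{I}\cap{\rm int}\,\mathcal{Q}_4(\overline{\rm x})$; the case ${\rm x}\in{\rm int}\,\mathcal{Q}_2(\overline{\rm x})$ is symmetric, with subsolutions replacing supersolutions. Choose a supersolution ${\rm s}=\overline{\rm x}+t_0\,{\rm v}\in{\rm int}\,\mathcal{Q}_4(\overline{\rm x})$ with $t_0\in(0,t^*]$ small enough that ${\rm s}\preceq_{se}{\rm x}$, which is possible because ${\rm x}$ lies in the open quadrant. Then $\{T^n({\rm s})\}$ is $\preceq_{se}$-nondecreasing, that is, its first coordinate is nondecreasing and its second nonincreasing, and ${\rm s}\preceq_{se}T^n({\rm s})$ with ${\rm s}$ in the open quadrant, so no limit of $\{T^n({\rm s})\}$ can equal $\overline{\rm x}$. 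It follows that $\{T^n({\rm s})\}$ cannot remain in $\mathcal{I}$ for all $n$: otherwise it would be a bounded monotone sequence converging to a fixed point of $T$ in $\mathcal{I}$, necessarily $\overline{\rm x}$, a contradiction. So $T^N({\rm s})\notin\mathcal{I}$ for some $N$, and since ${\rm s}\preceq_{se}T^N({\rm s})$ with ${\rm s}$ in the open quadrant, the defining inequality of the order interval $\mathcal{I}$ that fails is one of the two at its SE vertex; that inequality concerns a coordinate along which $\{T^n({\rm s})\}$ is monotone, so it remains violated, and $T^n({\rm s})\notin\mathcal{I}$ for all $n\ge N$. Finally ${\rm s}\preceq_{se}{\rm x}$ gives $T^n({\rm s})\preceq_{se}T^n({\rm x})$, so the same SE inequality fails for $T^n({\rm x})$, whence $T^n({\rm x})\notin\mathcal{I}$ for all $n\ge N$; this is i.

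The step I expect to be the main obstacle is this last one in the case $\mu>1$: turning ``the orbit eventually leaves $\mathcal{I}$'' into ``the orbit leaves $\mathcal{I}$ and never returns'', handled above by noting that, once $\{T^n({\rm s})\}$ has been pushed past the SE vertex of $\mathcal{I}$, the relevant scalar coordinate is monotone. A secondary technical point is the configuration $\overline{\rm x}\in\partial\mathcal{R}$, in which only one of $\mathcal{Q}_2(\overline{\rm x}),\mathcal{Q}_4(\overline{\rm x})$ meets ${\rm int}\,\mathcal{R}$: there one still obtains the needed sub- or supersolution on the appropriate side after first observing that $T(\mathcal{R})\subseteq\mathcal{R}$ forces one off-diagonal entry of $J_T(\overline{\rm x})$ to vanish, so that $J_T(\overline{\rm x})$ is triangular, whereupon the position of $\mu$ relative to $1$ supplies the scalar inequality that makes the vertex of $\mathcal{I}$ lying on $\partial\mathcal{R}$ a sub- or supersolution.
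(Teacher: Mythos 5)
Your proposal is correct and follows essentially the same route as the paper: a first-order Taylor expansion along the eigenvector $\rm v$ produces strict sub- and supersolutions on either side of $\overline{\rm x}$ in ${\rm int}\,\mathcal{Q}_2(\overline{\rm x})$ and ${\rm int}\,\mathcal{Q}_4(\overline{\rm x})$, and the order interval $\mathcal{I}$ spanned by two such points is then handled by $\preceq_{se}$-monotonicity (squeeze to $\overline{\rm x}$ when $\mu<1$, monotone expulsion when $\mu>1$); you are in fact slightly more careful than the paper on the ``once the orbit leaves $\mathcal{I}$ it never returns'' step and on part (ii), which the paper skips. The only superfluous element is the closing remark about $J_T(\overline{\rm x})$ being triangular when $\overline{\rm x}\in\partial\mathcal{R}$ --- that claim is neither needed nor clearly true; the paper simply sets $t_0=0$ or $t_1=0$ and builds the sub/supersolution only on the side of $\overline{\rm x}$ where the quadrant meets $\mathcal{R}$.
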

In the non-hyperbolic case, we have the following result.
\begin{theorem}
\label{th: 2 and 4 all cases part 2}
Assume that the hypotheses of Theorem \ref{th: 2 and 4 all cases} hold,
that $T$ is real analytic at $\overline{{\rm x}}$, and that $\mu = 1$.
Let $c_j$, $d_j$, $j=2,3,\ldots$
be defined by the Taylor series
\begin{equation}
\label{eq: taylor}
T(\overline{{\rm x}}+t\, {\rm v}) = \overline{{\rm x}} + {\rm v} \,t+ (c_2,d_2) \,t^2 + \cdots + (c_n,d_n) \,t^n + \cdots \, .
\end{equation}
Suppose that there exists an index $\ell \geq 2$ such that
$(c_\ell,d_\ell)\neq (0,0)$ and
$(c_j, d_j) = (0,0)\quad  \mbox{\rm for } j < \ell$.
If either
$$
{\rm (a)}\, c_\ell\, d_\ell < 0\, , \  \mbox{\rm or} \
{\rm (b)}\, c_\ell \neq 0\ \mbox{\rm and } \, T(\overline{{\rm x}}+t\,{\rm v})^{(2)} \mbox{ \rm is affine in $t$},
\  \mbox{\rm or} \
{\rm (c)}\, d_\ell \neq 0\ \mbox{\rm and } \, T(\overline{{\rm x}}+t\,{\rm v})^{(1)} \mbox{ \rm is affine in $t$},
$$
then
there exists an order interval $\mathcal{I}$ which is also a relative neighborhood of
$\overline{{\rm x}}$ such that
for every relative neighborhood $\mathcal{U} \subset \mathcal{I}$ of $\overline{{\rm x}}$
the following statements are true.
\begin{itemize}
\item[\rm i.]
If $\ell$ is odd and $(c_\ell,d_\ell) \preceq_{se} (0,0)$, then
$\mathcal{U} \cap \,{\rm int}\,\mathcal{Q}_4(\overline{{\rm x}})$ contains a supersolution and
$\mathcal{U} \cap \,{\rm int}\,\mathcal{Q}_2(\overline{{\rm x}})$ contains a subsolution.
In this case,
for every ${\rm x} \in \mathcal{I} \cap \mbox{\rm int} (\, \mathcal{Q}_2(\overline{{\rm x}})\cup \mathcal{Q}_4(\overline{{\rm x}})\,)$
 there exists $N$ such that
$T^n({\rm x}) \not \in \mathcal{I}$ for $n \geq N$.
\item[\rm ii.]
If $\ell$ is odd and $(0,0) \preceq_{se} (c_\ell,d_\ell) $,  then
$\mathcal{U} \cap \,{\rm int}\,\mathcal{Q}_4(\overline{{\rm x}})$ contains a subsolution and
$\mathcal{U} \cap \,{\rm int}\,\mathcal{Q}_2(\overline{{\rm x}})$ contains a supersolution.
In this case,
$T^n({\rm x}) \rightarrow \overline{{\rm x}}$ for every ${\rm x} \in \mathcal{I}$.
\item[\rm iii.]
If $\ell$ is even and $(c_\ell,d_\ell) \preceq_{se} (0,0)$, then
$\mathcal{U} \cap \,{\rm int}\,\mathcal{Q}_4(\overline{{\rm x}})$ contains a subsolution and
$\mathcal{U} \cap \,{\rm int}\,\mathcal{Q}_2(\overline{{\rm x}})$ contains a subsolution.
In this case,
$T^n({\rm x}) \rightarrow \overline{{\rm x}}$ for every ${\rm x} \in \mathcal{I} \cap \mathcal{Q}_4(\overline{{\rm x}})$, and
for every ${\rm x} \in \mathcal{I} \cap \mbox{\rm int} (\, \mathcal{Q}_2(\overline{{\rm x}})\,)$
 there exists $N$ such that
$T^n({\rm x}) \not \in \mathcal{I}$ for $n \geq N$.
\item[\rm iv.]
If $\ell$ is even and $(0,0) \preceq_{se} (c_\ell,d_\ell)$, then
$\mathcal{U} \cap \,{\rm int}\,\mathcal{Q}_2(\overline{{\rm x}})$ contains a supersolution and
$\mathcal{U} \cap \,{\rm int}\,\mathcal{Q}_4(\overline{{\rm x}})$ contains a supersolution.
In this case,
$T^n({\rm x}) \rightarrow \overline{{\rm x}}$ for every ${\rm x} \in \mathcal{I} \cap \mathcal{Q}_2(\overline{{\rm x}})$, and
for every ${\rm x} \in \mathcal{I} \cap \mbox{\rm int} (\,  \mathcal{Q}_4(\overline{{\rm x}})\,)$
 there exists $N$ such that
$T^n({\rm x}) \not \in \mathcal{I}$ for $n \geq N$.
\end{itemize}
\end{theorem}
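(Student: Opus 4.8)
The plan is to place $\overline{\rm x}$ at the origin and, after conjugating $T$ by the reflection $(x,y)\mapsto(y,x)$ if necessary (this is again competitive, fixes $\overline{\rm x}$ and $\mu$, and merely interchanges $\mathcal{Q}_2$ with $\mathcal{Q}_4$ and the two components of every Taylor coefficient $(c_j,d_j)$, so it permutes the statements i.--iv.\ among themselves), to assume ${\rm v}^{(1)}<0<{\rm v}^{(2)}$. Writing $P(t):=\overline{\rm x}+t\,{\rm v}$, this puts $P(t)$ in ${\rm int}\,\mathcal{Q}_2(\overline{\rm x})$ for $t>0$ and in ${\rm int}\,\mathcal{Q}_4(\overline{\rm x})$ for $t<0$, with $P(t)\prec_{se}\overline{\rm x}$ in the first case and $\overline{\rm x}\prec_{se}P(t)$ in the second. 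Throughout I use that a competitive map is order preserving for $\preceq_{se}$; for points $a,b$ set $\llbracket a,b\rrbracket_{se}:=\{{\rm z}:a\preceq_{se}{\rm z}\preceq_{se}b\}$, so that by the usual monotone comparison $\llbracket a,b\rrbracket_{se}$ is forward invariant whenever $a$ is a supersolution and $b$ a subsolution, and the iterates of a subsolution (resp.\ supersolution) are $\preceq_{se}$-decreasing (resp.\ increasing). The overall structure parallels that of the proof of Theorem \ref{th: 2 and 4 all cases}, with the linear term $(\mu-1)t\,{\rm v}$ there replaced by the leading nonlinear term $(c_\ell,d_\ell)\,t^\ell$.

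First I would do the Taylor bookkeeping. Put $F(t):=T(P(t))-P(t)=(c_\ell,d_\ell)\,t^\ell+O(t^{\ell+1})$. The hypotheses make $(c_\ell,d_\ell)$ comparable to $(0,0)$ in $\preceq_{se}$ (under (a) because $c_\ell d_\ell<0$; under (b) and (c) because one component is forced to vanish identically). By real analyticity, for all sufficiently small $t\neq0$ each coordinate of $F(t)$ either vanishes (when the corresponding coordinate of $T(\overline{\rm x}+t{\rm v})$ is affine, as in (b), (c)) or has the sign of the corresponding coordinate of $(c_\ell,d_\ell)\,t^\ell$; hence $F(t)$ is $\preceq_{se}$-comparable to $(0,0)$, with the direction determined by the sign of $(c_\ell,d_\ell)$ and the parity of $\ell$. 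Since $P(t)$ is a subsolution iff $F(t)\preceq_{se}(0,0)$ and a supersolution iff $(0,0)\preceq_{se}F(t)$, reading off the four cases gives exactly the sub/super\-solution assertions of i.--iv.\ (for $\mathcal{Q}_4$ the nature of $P(t)$, $t<0$, agrees with that of $P(t)$, $t>0$, when $\ell$ is even and is reversed when $\ell$ is odd). Since $|t|$ can be taken arbitrarily small, these sub/super\-solutions lie in any prescribed relative neighborhood $\mathcal{U}$ of $\overline{\rm x}$.

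Then I fix $\tau>0$ so small that the sign analysis above holds on $[-\tau,0)\cup(0,\tau]$ and that $\mathcal{I}:=\llbracket P(\tau),P(-\tau)\rrbracket_{se}\cap\mathcal{R}$ --- a rectangular relative neighborhood of $\overline{\rm x}$, with $P(\tau)=\min_{\preceq_{se}}\mathcal{I}$ and $P(-\tau)=\max_{\preceq_{se}}\mathcal{I}$ (assuming $\overline{\rm x}$ is not a vertex of $\mathcal{R}$, the vertex cases being similar with some quadrant statements vacuous) --- contains no fixed point of $T$ other than $\overline{\rm x}$; this is possible because $\overline{\rm x}$ is isolated and these order intervals contract to $\{\overline{\rm x}\}$ as $\tau\downarrow0$. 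The convergence statements in ii., iii., iv.\ then follow by soft monotone iteration: if $P(t)$ is a supersolution for all small $t>0$, then $\mathcal{I}\cap\mathcal{Q}_2(\overline{\rm x})\subseteq\llbracket P(\tau),\overline{\rm x}\rrbracket_{se}$, this interval is forward invariant, $T^n(P(\tau))$ is $\preceq_{se}$-increasing and bounded above by $\overline{\rm x}$ hence converges to the unique fixed point in $\mathcal{I}$, namely $\overline{\rm x}$, and squeezing $T^n(z)$ between $T^n(P(\tau))$ and $\overline{\rm x}$ gives $T^n(z)\to\overline{\rm x}$; symmetrically, if $P(t)$ is a subsolution for all small $t<0$ one gets convergence on $\mathcal{I}\cap\mathcal{Q}_4(\overline{\rm x})$, and if both hold then $\mathcal{I}$ itself is invariant and every orbit in $\mathcal{I}$ tends to $\overline{\rm x}$.

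The remaining assertions are the escape statements of i., iii., iv., and this is the step I expect to be the main obstacle. Suppose $P(t)$ is a subsolution for all small $t>0$ and let $z\in\mathcal{I}\cap{\rm int}\,\mathcal{Q}_2(\overline{\rm x})$. Set $t_{*}:=\min\big(z^{(1)}/{\rm v}^{(1)},\,z^{(2)}/{\rm v}^{(2)}\big)$; one checks $t_{*}\in(0,\tau]$ and $z\preceq_{se}p:=P(t_{*})$, where $p$ is a subsolution with $p\prec_{se}\overline{\rm x}$. The orbit $p_n:=T^n(p)$ is $\preceq_{se}$-decreasing; were it to stay in $\mathcal{I}$ it would converge (monotone and bounded) to a fixed point $\preceq_{se}p\prec_{se}\overline{\rm x}$ in $\mathcal{I}$, contradicting the choice of $\tau$, so $p_m\notin\mathcal{I}$ for some $m$; since $p_n\preceq_{se}p\preceq_{se}\max_{\preceq_{se}}\mathcal{I}$ for all $n$, the only way $p_m$ can leave $\mathcal{I}$ is via $p_m\not\succeq_{se}\min_{\preceq_{se}}\mathcal{I}$, i.e.\ $p_m^{(1)}<P(\tau)^{(1)}$ or $p_m^{(2)}>P(\tau)^{(2)}$, and monotonicity of $\{p_n\}$ makes whichever inequality occurs persist for all $n\geq m$. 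Finally $z\preceq_{se}p$ yields $T^n(z)\preceq_{se}p_n$, that is $T^n(z)^{(1)}\leq p_n^{(1)}$ and $T^n(z)^{(2)}\geq p_n^{(2)}$, so in either case $T^n(z)\notin\mathcal{I}$ for every $n\geq m$; the $\mathcal{Q}_4$ version is the mirror image, using a supersolution $q=P(s)$, $s<0$, with $q\preceq_{se}z$ and $\overline{\rm x}\prec_{se}q$, whose $\preceq_{se}$-increasing orbit eventually violates $q_n\preceq_{se}\max_{\preceq_{se}}\mathcal{I}$ in a fixed coordinate. The delicate point, and the heart of the proof, is selecting the companion point $p$ (or $q$) correctly on the eigenvector ray and keeping all $\preceq_{se}$-inequalities pointing the right way, so that the one-sided bound $T^n(z)\preceq_{se}p_n$ forces $T^n(z)$ out of $\mathcal{I}$ no matter which edge of $\mathcal{I}$ the orbit of $p$ crosses; the Taylor sign computation, the orientation normalization, and the convergence cases are routine once $\preceq_{se}$-monotonicity and the invariant order intervals are in hand.
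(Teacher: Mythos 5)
Your proposal is correct and follows essentially the same route as the paper: determine the sub/supersolution character of points on the eigenvector ray from the sign of the leading Taylor coefficient $(c_\ell,d_\ell)\,t^\ell$ (hypotheses (a)--(c) being exactly what forces $T(\overline{\rm x}+t\,{\rm v})-(\overline{\rm x}+t\,{\rm v})$ to be $\preceq_{se}$-comparable to $\overline{0}$ for small $t$), and then run the order-interval and monotone-comparison machinery of Theorem \ref{th: 2 and 4 all cases}. The paper compresses that second half into ``the rest of the proof now proceeds as in the proof of Theorem \ref{th: 2 and 4 all cases}''; your write-up simply fills in those deferred details (the choice of the companion point on the ray and the persistence of the coordinate inequality that ejects the orbit from $\mathcal{I}$).
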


The rest of this paper is organized as follows.
In Section \ref{sec: Competitive} some definitions and background on
competitive maps and systems of difference equations is given.
In Section \ref{sec: Applications} we present applications of the main results
to several classes of difference equations that depend on parameters.
In Example 1 we study system (\ref{equation:compet2dclarkulb=1}),
which has a continuum of non-hyperbolic equilibria
along a vertical line.  Theorem \ref{th: invariant curve} is used to establish
that
the stable set of each equilibrium point is an increasing curve,
and that
the limiting equilibrium of each solution is a continuous function of the
initial point.
Example 2 completes the analysis of a system of difference equation
that was studied in  \cite{CLCH, KUM1}  for the hyperbolic equilibria case.
Here we consider the case of non-hyperbolic equilibria,
for which there exists a line segment of such equilibria,
and we show that each of them has a global invariant set
given by an  increasing curve.
In Example 3, we apply our results to a difference equation
and obtain  stable sets for
each of the period-two points, which are non-hyperbolic and
consist of all points on a hyperbola.
Example 4 exhibits a system of difference equations with a unique equilibrium
which is of non-hyperbolic type and semi-stable.
The equilibrium is of
oscillatory type.
Theorems \ref{th: invariant curve},
\ref{th: basins} and \ref{th: 2 and 4 all cases part 2}
are used to establish global behavior of solutions.
For this we also used the competitive character of the system, as well as
information on eigenvectors associated to the characteristic values at the
equilibrium.
Example 5 is about a system with a semi-stable non-hyperbolic interior
equilibrium.
Only qualitative information is assumed about this system (two equilibria
exist),
yet this is all is needed to characterize the basins of attraction
of the two equilibria.  Our results here expand and complete the analysis
given in \cite{BM}.
In Section \ref{sec: Proof} the proofs of
Theorems \ref{th: invariant curve} -- \ref{th: 2 and 4 all cases part 2}  are
presented.
\section{Competitive and cooperative systems and maps}
\label{sec: Competitive}
We shall restrict our discussion to competitive systems,
since if system (\ref{equation:compet2d}) is cooperative,
a simple change of variables  yields a competitive system, see \cite{Sm1}.
Also, applications require the region $\mathcal{R}$ to be the cartesian product of
intervals in $\mathbb{R}$, which we shall assume in our main result.
\bigskip

The most natural way to study properties of competitive and cooperative
systems (\ref{equation:compet2d}) is to consider the corresponding maps
$T:\mathcal{R}\rightarrow \mathcal{R}$ where $T(x,y) = (f(x,y),g(x,y))$, $(x,y) \in \mathcal{R}$,
since such maps are {\it order preserving}
or {\it monotone}, i.e.,
$T(x^{(1)},y^{(1)}) \preceq T(x^{(2)},y^{(2)})$
whenever $(x^{(1)},y^{(1)}) \preceq  (x^{(2)},y^{(2)})$,
where $\preceq$ is a suitable partial order in $\mathbb{R}^2$.
Consider the ``North-East'' and  ``South-East'' partial orders in $\mathbb{R}^2$
given by
$$
\mbox{
$(x^{(1)},y^{(1)}) \preceq_{ne} (x^{(2)},y^{(2)})$ if and only if
$x^{(1)} \leq x^{(2)}\ \mbox{and} \ y^{(1)}\leq y^{(2)}$
}\, ,
$$
and
$$
\mbox{
$(x^{(1)},y^{(1)}) \preceq_{se} (x^{(2)},y^{(2)})$ if and only if $x^{(1)} \leq x^{(2)}\ \mbox{and} \ y^{(1)}\geq y^{(2)}$
}\, .
$$
We shall use the notation $\overline{0}$ to represent the origin $(0,0)$ in $\mathbb{R}^2$.
The first quadrant $\mathcal{Q}_1(\overline{0})=\{(x,y): x \geq 0, \ y\geq 0\}$ is the nonnegative cone
associated to $\preceq_{ne}$, and the fourth quadrant $\mathcal{Q}_4(\overline{0})=\{(x,y): x \geq 0, \ y\leq 0\}$
is the nonnegative cone associated to $\preceq_{se}$.

From the definition of cooperative and competitive systems
 one can see that maps of cooperative (respectively, competitive)
systems are monotone with respect to $\preceq_{ne}$
(resp. $\preceq_{se}$).
Note that strongly competitive maps $T$ satisfy the relation
$
(x,y) \preceq_{se} (w,z) \implies T(w,z)-T(x,y) \in {\rm int}\, \mathcal{Q}_4(\overline{0})
$.

Consider $\mathbb{R}^2$ equipped with a partial order $\preceq$
equal to either $\preceq_{ne}$ or $\preceq_{se}$, that is,
the nonnegative cone is $P=\mathcal{Q}_1(\overline{0})$ or $P=\mathcal{Q}_4(\overline{0})$.
We say that ${\rm x,y} \in \mathbb{R}^2$ are {\it comparable} in the order $\preceq$ if
either ${\rm x} \preceq {\rm y}$ or ${\rm y} \preceq {\rm x}$.
For ${\rm x}, {\rm y} \in \mathbb{R}^2$ such that ${\rm x} \prec {\rm y}$,
the {\it order interval} $\llbracket{\rm x},{\rm y}\rrbracket$ is the set
of all $\rm z$ such that ${\rm x}\preceq {\rm z} \preceq {\rm y}$.
A set $\mathcal{A}$ is said to be {\em linearly ordered}  if
$\preceq$ is a total order on $\mathcal{A}$.

A {\it map} $T$ on a set $\mathcal{B} \subset \mathbb{R}^2$ is a continuous function
$T:\mathcal{B}\rightarrow \mathcal{B}$.
A set $\mathcal{A} \subset \mathcal{B}$ is {\it invariant} for the map $T$ if
$T(\mathcal{A}) \subset \mathcal{A}$.
The {\it omega-limit set} of a point ${\rm z}\in \mathcal{A}$ is the set
$\omega({\rm z}) =\{ {\rm w} \in \mathcal{R}:\exists n_k \rightarrow \infty \mbox{ such that } T^{n_k}({\rm z}) \rightarrow {\rm w}\}$.
A point ${\rm x} \in \mathcal{B}$ is a {\it fixed point} of $T$ if
$T({\rm x}) = {\rm x}$, and a {\it minimal period-two point} if $T^2({\rm x})={\rm x}$
and $T({\rm x}) \neq {\rm x}$.
The {\it orbit of ${\rm x} \in \mathcal{B}$} is the sequence
$\{ T^\ell({\rm x})\}_{\ell=0}^\infty$.
A {\it minimal period-two orbit} is an orbit $\{ {\rm x}_\ell\}_{\ell=0}^\infty$
for which ${\rm x}_{0} \neq {\rm x}_1$ and ${\rm x}_{0} = {\rm x}_2$.
The {\it basin of attraction} of a fixed point ${\rm x}$ is the set of all
${\rm y}$ such that $T^n({\rm y})\rightarrow {\rm x}$.
A fixed point ${\rm x}$ is a {\it global attractor} of a set $\mathcal{A}$
if $\mathcal{A}$ is a subset of the basin of attraction of ${\rm x}$.

The map is {\it smooth} on $\mathcal{B}$ if the interior of $\mathcal{B}$ is nonempty
and if $T$ is continuously differentiable on the interior of $\mathcal{B}$.
If $T$ is differentiable, a sufficient condition for $T$ to be
strongly competitive is that the Jacobian matrix of $T$ at any ${\rm x} \in \mathcal{B}$
has the sign configuration
$$\left(\begin{array}{cc}+ & - \\ - & +\end{array}\right).
$$
System (\ref{equation:compet2d}) has an associated map
$T = (f,g)$ defined on the set $\mathcal{R}$.
For additional definitions and results  (e.g., repellor,  hyperbolic fixed points,
stability, asymptotic stability, stable and unstable sets and manifolds)
see \cite{Robinson} for maps,
\cite{HS1} and \cite{Sm1} for competitive maps, and \cite{KUM, KUM1}
for difference equations.

The next two theorems gives sufficient conditions for a
competitive system to have solutions that are component-wise
eventually monotonic.

Following Smith \cite{Sm1}, we introduce
\begin{definition}
\label{def: O+}
A competitive map $T:\mathcal{R}\rightarrow \mathcal{R}$, $\mathcal{R}\subset \mathbb{R}^2$,
 is said to satisfy condition
($O+$)  if for every $x$, $y$ in $\mathcal{R}$,
$T(x) \preceq_{ne} T(y)$ implies $x \preceq_{ne} y$.
The map $T$ is said to satisfy condition
($O-$)  if for every $x$, $y$ in $\mathcal{R}$,
$T(x) \preceq_{ne} T(y)$ implies $y \preceq_{ne} x$.
\end{definition}
The following theorem was proved by DeMottoni-Schiaffino
\cite{deMS} for the Poincar\'e map of a periodic
competitive Lotka-Volterra system of differential equations.
Smith generalized the proof to competitive and cooperative maps
 \cite{S1986JDE, S1986SIAM}.

\begin{theorem}
\label{thm: demottoni}
If $T:\mathcal{R}\rightarrow \mathcal{R}$, $\mathcal{R}\subset \mathbb{R}^2$,
 is a competitive map for which ($O+$) holds then
for all $x \in \mathcal{R}$, $\{T^n(x)\}$ is eventually componentwise monotone.
If the orbit of $x$ has compact closure, then it converges to
a fixed point of $T$.
If instead ($O-$) holds, then for all $x \in \mathcal{R}$,
$\{T^{2n}\}$ is eventually componentwise monotone.
If the orbit of $x$ has compact closure in $\mathcal{R}$, then
its omega limit set is either a period-two orbit or a fixed
point.
\end{theorem}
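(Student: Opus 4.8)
The plan is to work entirely with the two partial orders $\preceq_{ne}$ and $\preceq_{se}$ on $\mathbb{R}^2$, the defining monotonicity of a competitive map (monotonicity with respect to $\preceq_{se}$), and the hypothesis $(O+)$ (respectively $(O-)$). First I would record the elementary dichotomy that any two points of $\mathbb{R}^2$ are comparable in at least one of $\preceq_{ne}$, $\preceq_{se}$. Fix $x\in\mathcal{R}$, put $x_n=T^n(x)$, and compare $x_0$ with $x_1$. If they are $\preceq_{se}$-comparable, say $x_0\preceq_{se}x_1$, then competitiveness gives $x_1=T(x_0)\preceq_{se}T(x_1)=x_2$, and inductively $x_n\preceq_{se}x_{n+1}$ for all $n$; such a sequence has first coordinate nondecreasing and second coordinate nonincreasing, hence is componentwise monotone (the reverse comparison is symmetric).

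It remains to treat the case in which $x_0$ and $x_1$ are comparable only in $\preceq_{ne}$, say $x_0\preceq_{ne}x_1$. Here I would prove a one-step lemma: whenever $x_k\preceq_{ne}x_{k+1}$, one of the following holds: (a) $x_{k+1}\preceq_{ne}x_{k+2}$; (b) $x_{k+1}$ and $x_{k+2}$ are $\preceq_{se}$-comparable; or (c) $x_{k+2}\preceq_{ne}x_{k+1}$, in which case applying $(O+)$ to $T(x_{k+1})=x_{k+2}\preceq_{ne}x_{k+1}=T(x_k)$ yields $x_{k+1}\preceq_{ne}x_k$, so $x_k=x_{k+1}$ is a fixed point and the orbit is constant from index $k$ on. Iterating, the orbit either remains $\preceq_{ne}$-monotone for all $n$, or at some index $m$ it reaches a $\preceq_{se}$-comparable consecutive pair and is $\preceq_{se}$-monotone thereafter, or it becomes constant; in every case $\{x_n\}$ is eventually componentwise monotone. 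If additionally the orbit has compact closure, a bounded componentwise-monotone sequence converges, $x_n\to p$, and continuity of $T$ forces $x_{n+1}\to Tp$ as well as $x_{n+1}\to p$, hence $Tp=p$.

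For the $(O-)$ case I would pass to $T^2$. It is again monotone with respect to $\preceq_{se}$, hence competitive, and it satisfies $(O+)$: applying $(O-)$ twice to $T^2(x)=T(T(x))\preceq_{ne}T(T(y))=T^2(y)$ gives first $T(y)\preceq_{ne}T(x)$ and then $x\preceq_{ne}y$. The conclusion already proved, applied to $T^2$, shows $\{T^{2n}(x)\}$ is eventually componentwise monotone; when the orbit has compact closure it converges, $T^{2n}(x)\to p$ with $T^2p=p$ (the same continuity argument for $T^2$), and then $T^{2n+1}(x)\to Tp$, so $\omega(x)=\{p,Tp\}$, which is a minimal period-two orbit if $Tp\neq p$ and a fixed point otherwise.

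The main obstacle I anticipate is not a deep step but the bookkeeping in the $(O+)$ analysis: one must check that a reversal of the $\preceq_{ne}$-ordering between consecutive iterates cannot happen without collapsing to a fixed point (this is exactly where $(O+)$ is used), that once two consecutive iterates are $\preceq_{se}$-comparable the competitive property locks in componentwise monotonicity for all later times, and that the non-strict inequalities (iterates sharing a coordinate) are handled so that ``comparable in at least one order'' is applied correctly throughout.
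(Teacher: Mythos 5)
Your argument is correct, but there is nothing in the paper to compare it against: Theorem \ref{thm: demottoni} is quoted as background, attributed to de Mottoni--Schiaffino and to Smith, and the paper supplies no proof of it. What you have written is essentially the standard planar argument (the one in Smith's work): the decisive observation is that any two points of $\mathbb{R}^2$ are comparable in at least one of $\preceq_{ne}$, $\preceq_{se}$ --- this is the step that confines the theorem to the plane --- and all the remaining ingredients are in place. In particular, your case (c) is handled correctly: from $x_k\preceq_{ne}x_{k+1}$ and $T(x_{k+1})\preceq_{ne}T(x_k)$, condition $(O+)$ yields $x_{k+1}\preceq_{ne}x_k$, hence $x_k=x_{k+1}$ and the orbit is constant from then on, so a reversal of the $\preceq_{ne}$-chain is impossible without collapse to a fixed point; once a consecutive pair is $\preceq_{se}$-comparable, competitiveness locks in $\preceq_{se}$-monotonicity (first coordinate monotone one way, second the other), and a $\preceq_{ne}$-monotone orbit is componentwise monotone outright. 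The reduction of the $(O-)$ case to $T^2$ is also right: $T^2$ is again $\preceq_{se}$-monotone, and two applications of $(O-)$ show it satisfies $(O+)$, after which compactness gives $T^{2n}(x)\to p$ with $T^2(p)=p$ and $\omega(x)=\{p,T(p)\}$. The only point worth stating explicitly when you write this up is the degenerate situation where consecutive iterates share a coordinate (so they are comparable in both orders); as you note, either branch of your trichotomy then leads to a conclusion consistent with eventual componentwise monotonicity, so no harm is done.
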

The following result is Lemma 4.3 from \cite{Sm1} specialized to
smooth maps on planar rectangular regions.
If $T$ is a map which is differentiable at a point $\rm x$,
by $J_T({\rm x})$ we denote the Jacobian matrix of $T$ at $x$.
\begin{theorem}
\label{thm: suff cond O+}
Let $T$ be a $C^1$ competitive map on a rectangular region $\mathcal{R}$.
If $T$ is injective and
 $\mbox{\rm det} J_T(x) > 0$  for all $x \in \mathcal{R}$
 then $T$ satisfies ($O+$).
 If $T$ is injective and
 $\mbox{\rm det} J_T(x) < 0$ for all $x \in \mathcal{R}$
 then $T$ satisfies ($O-$).
\end{theorem}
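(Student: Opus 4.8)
\medskip
\noindent\emph{A proposed proof.}
Since this is the planar, rectangular specialization of Lemma~4.3 of \cite{Sm1}, one clean option is to cite that lemma directly, observing that a $C^1$ competitive (resp.\ cooperative) map has Jacobian with the sign pattern $\left(\begin{smallmatrix}+&-\\-&+\end{smallmatrix}\right)$ (resp.\ $\left(\begin{smallmatrix}+&+\\+&+\end{smallmatrix}\right)$), so that the four hypotheses here are exactly Smith's. I would, however, also record a self-contained argument. Write $T=(f,g)$, and note first the one consequence of the determinant hypothesis that is used pointwise: for a competitive map $f_x\ge0$, $g_y\ge0$, $f_y\le0$, $g_x\le0$, so $f_yg_x\ge0$ and hence $\det J_T=f_xg_y-f_yg_x>0$ forces $f_x>0$ and $g_y>0$ everywhere on $\mathcal R$; thus $f(\cdot,v)$ and $g(u,\cdot)$ are \emph{strictly} increasing for each fixed value of the frozen variable. (When $\det J_T<0$ one gets instead $f_y<0$ and $g_x<0$ everywhere.)

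\noindent Now assume $\det J_T>0$ and $T(x)\preceq_{ne}T(y)$; I must show $x\preceq_{ne}y$. Write $x=(x_1,x_2)$, $y=(y_1,y_2)$, and suppose $x\not\preceq_{ne}y$, i.e.\ $x_1>y_1$ or $x_2>y_2$. If $x_1\le y_1$ and $x_2>y_2$, then $x\preceq_{se}y$ with $x\neq y$, so competitiveness gives $T(x)\preceq_{se}T(y)$; comparing second coordinates with $T(x)\preceq_{ne}T(y)$ forces $g(x)=g(y)$, whereas $g(x)=g(x_1,x_2)\ge g(y_1,x_2)>g(y_1,y_2)=g(y)$ by the monotonicity of $g$ together with the strictness just noted---a contradiction. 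Symmetrically, if $x_1>y_1$ and $x_2\le y_2$, then $y\preceq_{se}x$, $y\neq x$, which forces $f(x)=f(y)$ while $f(x)=f(x_1,x_2)\ge f(x_1,y_2)>f(y_1,y_2)=f(y)$, again impossible. Hence the only surviving possibility is $y_1<x_1$ and $y_2<x_2$; this is the heart of the matter, and it is here that injectivity and the non-vanishing of $\det J_T$ enter.

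\noindent In that case put $R':=[y_1,x_1]\times[y_2,x_2]\subseteq\mathcal R$, a compact rectangle with corners $y$ (SW), $P:=(x_1,y_2)$ (SE), $x$ (NE), $Q:=(y_1,x_2)$ (NW). Since $T$ is injective and $C^1$ with $\det J_T>0$, the restriction $T|_{R'}$ is a homeomorphism onto its image and orientation preserving, so $\Gamma:=T(\partial R')$ is a Jordan curve, $T(\operatorname{int}R')$ is its bounded complementary component, and the winding number $n(\Gamma,q)$ equals $+1$ for $q\in T(\operatorname{int}R')$ and $0$ for $q$ outside $\Gamma$ (this is the topological degree of $T|_{R'}$, which is $+1$ on the image because $\det J_T>0$). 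Each edge of $R'$ is a segment on which one coordinate is frozen, so by competitiveness its $T$-image is $\preceq_{se}$-monotone; recording endpoints and using $f_x,g_y>0$ one gets, in the component notation of the paper,
\[
T(Q)^{(1)}<T(x)^{(1)}\le T(y)^{(1)}<T(P)^{(1)},\qquad T(P)^{(2)}<T(x)^{(2)}\le T(y)^{(2)}<T(Q)^{(2)},
\]
the two middle inequalities being the hypothesis $T(x)\preceq_{ne}T(y)$. Assume for the moment that both middle inequalities are strict. Then $w^\ast:=\big(T(y)^{(1)},\,T(x)^{(2)}\big)$ does not lie on $\Gamma$, and, using that the four edge-images are $\preceq_{se}$-monotone, one checks directly that the horizontal ray issuing from $w^\ast$ in the direction of increasing first coordinate meets $\Gamma$ in exactly one (transversal) point---on the image of the bottom edge $y\to P$---along which $\Gamma$ has decreasing second coordinate. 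Hence $n(\Gamma,w^\ast)=-1$, contradicting $n(\Gamma,w^\ast)\in\{0,+1\}$. (If one of the middle inequalities is an equality, then $T(x)$ or $T(y)$ falls on a coordinate line through $w^\ast$; replacing $T(x)^{(2)}$ by $T(x)^{(2)}-\varepsilon$ or $T(y)^{(1)}$ by $T(y)^{(1)}+\varepsilon$ restores the situation and the same count applies.) This proves $(O+)$. The statement $(O-)$, for $\det J_T<0$, follows by the same scheme, with $f_y<0$ and $g_x<0$ supplying the strictness, the orientation of $T|_{R'}$ reversed (so $n(\Gamma,\cdot)\in\{0,-1\}$ off $\Gamma$), and the auxiliary point and ray chosen accordingly.

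\noindent The determinant computation and the case split are routine; \emph{the main obstacle is the last step}---making the winding-number count airtight, in particular verifying that no edge-image other than that of the bottom edge meets the chosen ray, and disposing cleanly of the degenerate subcases. If one prefers to avoid planar topology altogether, it is precisely this step that is absorbed by Lemma~4.3 of \cite{Sm1}.
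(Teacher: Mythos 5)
Your first option is exactly what the paper does: Theorem \ref{thm: suff cond O+} is presented there purely as ``Lemma 4.3 from \cite{Sm1} specialized to smooth maps on planar rectangular regions,'' with no proof supplied. Your self-contained argument is therefore an addition rather than a reconstruction, and it is correct in all essentials: the determinant computation forcing $f_x>0,\ g_y>0$ (resp.\ $f_y<0,\ g_x<0$) is right, the two mixed cases are correctly killed by strict coordinate-wise monotonicity, and in the remaining case ($y$ strictly below and to the left of $x$) the four endpoint inequalities and the resulting count $n(\Gamma,w^\ast)=-1$ do check out against the degree constraint $n(\Gamma,\cdot)\in\{0,+1\}$; I verified that in the strict case the right-edge image meets the critical horizontal line only at $T(x)$, which lies strictly left of $w^\ast$, while the top and left edge images miss the ray entirely. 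Two refinements are worth recording. First, injectivity forces at most one of the two middle inequalities $T(x)^{(1)}\le T(y)^{(1)}$, $T(x)^{(2)}\le T(y)^{(2)}$ to be an equality (both equalities give $T(x)=T(y)$, hence $x=y$), so there are only two symmetric degenerate subcases. Second, ``exactly one transversal point'' is slightly too strong: the second coordinate of the bottom-edge image is only nonincreasing, so that image may contain a horizontal segment at the critical height; the clean statement is that, by monotonicity, the curve makes a single passage from strictly above to strictly below the ray's line, entirely at first coordinates exceeding $w^{\ast(1)}$, and such a passage contributes $-1$ whether it is a point or a segment. The same observation matters in the perturbation step, where the bad set of $\varepsilon$ can be an interval, so $\varepsilon$ must be taken smaller than the gap between $T(y)^{(1)}$ and the first coordinate at which that segment begins. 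With these details pinned down your winding-number route is sound, and it buys a proof readers can follow without consulting \cite{Sm1}, at the cost of invoking planar degree theory that the citation avoids.
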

H. L. Smith performed a systematic study of competitive and cooperative maps
and in particular introduced
invariant manifolds techniques in his analysis   \cite{S1986JDE, S1986SIAM,  S2-1986SIAM}
 with some results valid for maps on  $n$-dimensional space.
Smith restricted attention mostly to competitive maps $T$ that satisfy
additional constraints.
In particular, $T$ is required to be
a diffeomorphism of
a neighborhood of $\mathbb{R}^n_+$
that satisfies either ($O+$) or ($O-$),
(this is the case if $T$ is orientation-preserving or orientation-reversing), and
that the coordinate semiaxes are invariant under $T$.
The latter requirement is a common feature of many population dynamics applications,
where a point on a positive semiaxis is interpreted as one of the populations
having no individuals, and thus the corresponding orbit terms having
the same characteristic.
For such class of maps (as well as for cooperative maps satisfying similar hypotheses)
Smith obtained results on invariant manifolds
passing through fixed points and a fairly complete description of the
phase-portrait when $n=2$, especially for
those cases having a unique fixed point on each of the
open positive semiaxes.

\section{Applications}
\label{sec: Applications}
In this section we present several applications of our main results.
The examples are of non-hyperbolic type.
The hyperbolic case is well known and has been treated in
\cite{CKS,CAKS,KUM1,KUM2,KN4}.
\begin{example}
A system with a continuum of non-hyperbolic equilibria along a vertical line.
\rm
Consider system (\ref{equation:compet2dclarkulb=1}) with $a>1$.
The map of the system is
$$
T(x,y) = \left(\frac{x}{a+y},\frac{y}{1+x}\right),\quad (x,y) \in [0,\infty)^2\, .
$$
The fixed points of $T$ have the form $(0, \bar{y})$, with $\overline{y} \geq 0$.
The map is smooth and strongly competitive on $[0,\infty)^2$.
One eigenvalue of the Jacobian of the map $T$ at at $(0, \bar{y})$  is $1$.
The hyperbolic eigenvalue  is
$\frac{1}{a+\overline{y}}$,
with corresponding eigenvector
$(a - 1 + \overline{y} , \overline{y}\,(a+ \overline{y}) )$.
Thus the hypotheses of Theorem \ref{th: invariant curve} are satisfied.
Notice that the conditions of  Theorem \ref{thm: suff cond O+}
are satisfied and thus by Theorem \ref{thm: demottoni} all solutions
of (\ref{equation:compet2dclarkulb=1}) are eventually
componentwise monotone.
Indeed, the Jacobian matrix $J_{T}(x, y)$ satisfies
$\det J_{T}(x, y) = \frac{1}{a+\overline{y}} > 0$.
In addition, a direct verification shows that $T$ is injective.
A consequence of Theorem \ref{thm: demottoni} is that
there are no periodic points of minimal period-two.
Also, with an argument similar to the one used in \cite{BKK1},
one has that the equilibrium depends continuously on the initial
condition. That is, if $T^*(x,y) := \lim T^n(x,y)$, then
$T^*$ is continuous.
These considerations lead to the following result.
\begin{theorem}
\label{theorem:mkclarka>1b=1}
For system (\ref{equation:compet2dclarkulb=1}) with $a>1$,
\begin{itemize}
\item[\rm i.]
Every solution converges to an equilibrium $(0,\overline{y})$
for some $\overline{y}\geq 0$.
\item[\rm ii.]
At each equilibrium  $(0, \bar{y})$ with $\overline{y} > 0$,
the stable set $W^s((0,\overline{y}))$
is an unbounded increasing  curve $\mathcal{C}$
that starts at $(0, \bar{y})$.
 \item[\rm iii]
 The limiting equilibrium varies continuously with the initial condition.
\end{itemize}
\end{theorem}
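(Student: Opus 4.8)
The plan is to prove the three parts in the order (i), (iii), (ii), since the elementary facts in (i) and (iii) supply the ingredients needed for (ii). For (i), I would first note that $x_{n+1}=x_n/(a+y_n)\le x_n/a$ with $a>1$, so $\{x_n\}$ is nonincreasing, $x_n\le x_0/a^n\to 0$, and $\sum_k x_k<\infty$. From $y_{n+1}=y_n/(1+x_n)$ one gets the closed form $y_n=y_0\prod_{k=0}^{n-1}(1+x_k)^{-1}$, a nonincreasing sequence bounded below by $0$, hence convergent to some $\overline{y}\ge 0$ (and $\overline{y}>0$ whenever $y_0>0$, because $\sum_k x_k<\infty$ makes the infinite product converge to a finite positive number). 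Thus $T^n(x_0,y_0)\to(0,\overline{y})$, a fixed point. This could equivalently be deduced from Theorem~\ref{thm: demottoni}, since $(O+)$ holds here by Theorem~\ref{thm: suff cond O+} ($\det J_T>0$ and $T$ injective) and every orbit has compact closure by the estimate above; the same reasoning shows there are no minimal period-two points, a fact used below.

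For (iii), I would use the explicit limit map $T^*(x_0,y_0)=\bigl(0,\;y_0\prod_{k=0}^{\infty}(1+x_k(x_0,y_0))^{-1}\bigr)$. Each iterate $x_k$ depends continuously on $(x_0,y_0)$, and the bound $x_k\le x_0/a^k$ makes the partial products converge uniformly on compact subsets of $[0,\infty)^2$; hence $T^*$ is a uniform-on-compacta limit of continuous maps, and is therefore continuous. (This is the ``argument similar to \cite{BKK1}'' referred to before the theorem.)

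For (ii), fix $\overline{y}>0$. The hypotheses of Theorem~\ref{th: invariant curve} have already been checked, so there is an invariant, strictly increasing, continuous curve $\mathcal{C}$ through $(0,\overline{y})$, tangent there to the eigenspace of $\lambda=1/(a+\overline{y})\in(0,1)$ and contained in the basin of attraction of $(0,\overline{y})$; in particular $\mathcal{C}\subseteq W^s((0,\overline{y}))$. Since $T$ has no fixed points and no minimal period-two points in $\Delta=(0,\infty)\times(\overline{y},\infty)$, Theorem~\ref{th: suff cond for boundary endpoints}(i) gives that the endpoints of $\mathcal{C}$ lie in $\partial\mathcal{R}$; because the eigenvector of $\lambda$ points into the open first quadrant relative to $(0,\overline{y})$, the curve $\mathcal{C}$ is a graph through the boundary point $(0,\overline{y})$ with $\mathcal{C}\setminus\{(0,\overline{y})\}\subset\Delta\subset{\rm int}\,\mathcal{R}$, so the only possibility consistent with Theorem~\ref{th: suff cond for boundary endpoints} is that $\mathcal{C}$ is unbounded and ``starts at $(0,\overline{y})$''. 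To get the reverse inclusion $W^s((0,\overline{y}))\subseteq\mathcal{C}$, I would show $W^s((0,\overline{y}))$ is itself an increasing graph. Write $\psi(x_0,y_0):=y_0\prod_{k\ge 0}(1+x_k)^{-1}$ for the second coordinate of $T^*$. South-East monotonicity of $T$ forces $\psi$ to be nonincreasing with respect to $\preceq_{se}$; combined with the strict contraction $x_{k+1}\le x_k/a$ (and strong competitiveness to propagate strict inequalities along orbits), one checks that along every horizontal line $y=c>0$ the map $x\mapsto\psi(x,c)$ is a continuous strictly decreasing bijection onto $(0,c]$, while along every vertical line $x=b\ge 0$ the map $y\mapsto\psi(b,y)$ is a continuous strictly increasing bijection onto $[0,\infty)$. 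Hence, for $s>0$, the level set $W^s((0,\overline{y}))=\psi^{-1}(s)$ (note $y_0=0$ forces $\psi=0$) meets each line $y=c$ with $c\ge s$ and each line $x=b$ in exactly one point, so it is simultaneously a graph over the $x$-axis and over the $y$-axis; these two descriptions are inverse monotone bijections between $[0,\infty)$ and $[s,\infty)$, and surjectivity forces continuity, so $W^s((0,\overline{y}))$ is the graph of a continuous strictly increasing function on $[0,\infty)$ through $(0,\overline{y})$, i.e. an unbounded increasing curve. Finally $\mathcal{C}$ is an unbounded connected subarc of this curve containing its endpoint $(0,\overline{y})$, so $\mathcal{C}=W^s((0,\overline{y}))$.

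The main obstacle is exactly the reverse inclusion $W^s((0,\overline{y}))\subseteq\mathcal{C}$. Because $(0,\overline{y})$ lies on $\partial\mathcal{R}$, part~(B) of Theorem~\ref{th: basins} — which would separate the stable set from $\mathcal{W}_-$ and $\mathcal{W}_+$ at once — is not available, and part~(A) alone does not suffice, since convergence to $(0,\overline{y})$ is compatible with approaching either $\mathcal{Q}_2(\overline{\rm x})$ or $\mathcal{Q}_4(\overline{\rm x})$; moreover the equilibrium is non-hyperbolic with center eigenvector $(0,1)$ along a coordinate axis, so the sub/supersolution machinery of Theorems~\ref{th: 2 and 4 all cases}--\ref{th: 2 and 4 all cases part 2} does not apply. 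The argument therefore has to exploit the integrable structure $y_n=y_0\prod(1+x_k)^{-1}$ and the $\preceq_{se}$-monotonicity of $T^*$ to pin down the level sets of $T^*$ directly, as above.
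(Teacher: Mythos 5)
Your proposal is correct, and on the one genuinely delicate point it is more complete than the paper. The paper's own ``proof'' of this theorem is the discussion preceding its statement: it verifies the hypotheses of Theorem~\ref{th: invariant curve} (smoothness, strong competitivity, eigenvalues $1$ and $\tfrac{1}{a+\overline{y}}$ with eigenvector off the coordinate axes), invokes Theorems~\ref{thm: suff cond O+} and~\ref{thm: demottoni} to get convergence of every orbit and the absence of minimal period-two points, asserts continuity of $T^*$ ``with an argument similar to the one used in \cite{BKK1}'', and then states that these considerations yield the theorem. Your parts (i) and (iii) are explicit, self-contained versions of exactly those steps, using the closed form $y_n=y_0\prod_{k<n}(1+x_k)^{-1}$ and the bound $x_k\le x_0/a^k$ to get uniform-on-compacta convergence; this is sound and arguably preferable to the citation. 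Where you genuinely diverge is the identification $W^s((0,\overline{y}))=\mathcal{C}$ in part (ii): Theorem~\ref{th: invariant curve} only gives $\mathcal{C}\subseteq W^s((0,\overline{y}))$, and, as you correctly diagnose, part (B) of Theorem~\ref{th: basins} and the sub/supersolution results are unavailable here because the fixed point sits on $\partial\mathcal{R}$ with its center direction along a coordinate axis, so the reverse inclusion is not a formal consequence of the general machinery; the paper leaves this step implicit. Your level-set analysis of $\psi(x_0,y_0)=y_0\prod_{k\ge 0}(1+x_k)^{-1}$ --- strict monotonicity along horizontal and vertical lines (propagated by strong competitiveness through every factor of the convergent product), surjectivity of the one-variable restrictions, and the fact that a monotone surjection between intervals is continuous --- closes that gap cleanly, and your argument that the unbounded connected subarc $\mathcal{C}$ must exhaust the graph is right. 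The trade-off is that your route is specific to this system, since it exploits the integrable structure of $y_n$, whereas the paper's framework is designed to avoid explicit limit functions; what you buy is an actual proof of the strongest assertion in (ii), namely that the basin of attraction is exactly the increasing curve.
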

Statement ii excludes the equilibrium $(0,0)$ since the hypotheses
of Theorem \ref{th: invariant curve} are not satisfied at $(0,0)$.
Theorem \ref{theorem:mkclarka>1b=1}
has been proved in \cite{CK} by using
 differential equations associated to the map $T$
and asymptotic estimates of infinite products.
\end{example}
\begin{example}
A system with a continuum of non-hyperbolic equilibria along
a line.
\rm
Consider the Leslie-Gower competition model with nonhyperbolic equilibrium points
\begin{equation}
\label{eq: Cushing}
\begin{array}{rcl}
x_{n+1} & = &  \displaystyle  \frac{b_1 \, x_n}{1+   x_n + c_{1}\, y_n} \\ \\
y_{n+1} & = &  \displaystyle  \frac{b_2 \, y_n}{1+ y_n + c_{2} \, x_n}
\end{array}
\quad , \quad n=0,1,\ldots
\end{equation}
where all parameters are positive and the initial conditions $x_0, y_0$ are non-negative. This system was considered in \cite{CLCH, KUM1} and its global dynamics has been settled with the exception of the nonhyperbolic case which will be considered here.
It is shown in \cite{CLCH} that when
$c_1\,(b_2-1) \neq b_1-1$
or
$c_2\,(b_1-1) \neq b_2-1$,
the map associated to (\ref{eq: Cushing}),
\begin{equation}
T(x,y) = \left(
\frac{b_1\, x}{1+x+c_1\, y} , \frac{b_2\, y}{1+y+c_2\, x}
\right)
\end{equation}
has between one and four fixed points,
and that they are of hyperbolic type.
The case when
\begin{equation}
\label{eq: cushing nonhyp}
c_1\,(b_2-1) = b_1-1
\quad \mbox{\rm and} \quad
c_2\,(b_1-1)= b_2-1
\end{equation}
was not considered in \cite{CLCH}.
When (\ref{eq: cushing nonhyp}) holds,
a direct calculation gives that
the equilibrium points of $T$ are $E_0(0,0)$ and
the family of points $\mathcal{E} :=\{ E_t \, : \, 0 \leq t \leq 1 \, \}$, where
$$
E_t := \left( (b_1 -1)\,(1-t)  ,  (b_2-1)\,t  \right), \quad 0 \leq t \leq 1.
$$
The eigenvalues of the Jacobian of $T$ at  $E_t$ are easily calculated to be
$$
1 \quad \mbox{and} \quad (1-t)\,\frac{1}{b_1} + t\,\frac{1}{b_2}\ ,
\quad 0 \leq t \leq 1\, ,
$$
and corresponding  eigenvectors are
$$
\left(- \frac{1-b_1}{1-b_2},1\right)
 \  \mbox{and} \
 \left(
 b_2\, (1-b_1)^2\,(1-t)\, , \, b_1\,(1-b_2)^2\, t \,
 \right)\ ,
\quad 0 \leq t \leq 1.
$$
It is shown in \cite{CaKuLaMe} that,
for system (\ref{eq: Cushing}),
the hypotheses
of  Theorem \ref{thm: demottoni}
are satisfied and that all solutions  fall inside an
invariant rectangular region.
Therefore  every solution of (\ref{eq: Cushing})
converges to an equilibrium point.
A direct calculation shows that the origin is a repeller.
We conclude
that every nonzero solution converges to a point
$(\overline{x},\overline{y}) \in \mathcal{E}$.
Also, with an argument similar to the one used in \cite{BKK1},
one has that the equilibrium depends continuously on the initial
condition. That is, if $T^*(x,y) := \lim T^n(x,y)$, then
$T^*$ is continuous.
These observations, together with
an application of Theorem \ref{th: invariant curve} lead to the following result.
\begin{theorem}
Assume inequalities (\ref{eq: cushing nonhyp}) hold. Then,
\begin{itemize}
\item[\rm  i]
Every nonzero solution to system (\ref{eq: Cushing}) converges to an equlibrium
$(\overline{x},\overline{y}) \in \mathcal{E}$.
\item[\rm ii]
For every $(\overline{x},\overline{y}) \in \mathcal{E}$ with $\overline{x} \neq 0$
and $\overline{y}\neq 0$, the stable set $W^s_{(\overline{x},\overline{y})}$ is
an unbounded increasing
curve $\mathcal{C}$ with endpoint $(0,0)$.
 \item[\rm iii]
 The limiting equilibrium varies continuously with the initial condition.
\end{itemize}
\end{theorem}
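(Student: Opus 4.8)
The plan is to obtain statements i, ii, iii as more or less immediate consequences of the structural results already established in this paper, together with the special features of system (\ref{eq: Cushing}) recorded above. Statement i is the already-quoted conclusion: by \cite{CaKuLaMe} the hypotheses of Theorem \ref{thm: demottoni} hold and all orbits enter an invariant rectangle, hence every orbit converges to a fixed point; since the origin $E_0$ is a repeller, a direct check that the boundary semiaxes are invariant and that on the $x$-axis the dynamics is $x_{n+1}=b_1 x_n/(1+x_n)$ (so $x_n\to b_1-1=$ the coordinate of $E_1$) and similarly on the $y$-axis, shows that every nonzero orbit converges to some $E_t\in\mathcal{E}$. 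I would spell out just enough of this to pin down the limit set as $\mathcal{E}$.

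For statement ii I would fix $(\overline{x},\overline{y})=E_t$ with $0<t<1$ and verify the hypotheses of Theorem \ref{th: invariant curve} on the rectangular region $\mathcal{R}=[0,\infty)^2$. The map $T$ is smooth and strongly competitive on all of $[0,\infty)^2$ (the Jacobian has the sign pattern $\left(\begin{smallmatrix}+&-\\-&+\end{smallmatrix}\right)$ on the open quadrant), so by the remark following Theorem \ref{th: smooth manifold} hypothesis b reduces to $|\lambda|<1$; here the eigenvalues at $E_t$ are $\mu=1$ and $\lambda=(1-t)/b_1+t/b_2$, and the nonhyperbolicity hypothesis (\ref{eq: cushing nonhyp}) forces $b_1,b_2>1$ — otherwise a coordinate of $E_1$ or $E_0$ would be nonpositive against the standing positivity assumptions, or $\mathcal{E}$ would degenerate — so $0<\lambda<1$, and the listed eigenvector for $\lambda$ has both coordinates strictly positive, hence is not a coordinate axis. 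Thus Theorem \ref{th: invariant curve} yields an invariant increasing curve $\mathcal{C}$ through $E_t$, tangent to $E^\lambda$, contained in the basin of attraction of $E_t$. To locate its endpoints I would invoke Theorem \ref{th: suff cond for boundary endpoints}(ii): $\det J_T(E_t)=\lambda>0$, and in $\Delta=\mathcal{R}\cap\mathrm{int}(\mathcal{Q}_1(E_t)\cup\mathcal{Q}_3(E_t))$ there are no other fixed points (all points of $\mathcal{E}\setminus\{E_t\}$ are ordered with respect to $E_t$ by $\preceq_{se}$, not $\preceq_{ne}$, since $\mathcal{E}$ is a decreasing segment, and the boundary fixed points $E_0,E_1$ likewise lie off $\Delta$) nor solutions of $T(\mathrm{x})=E_t$ in $\Delta$ (because such a preimage would have to be ordered with $E_t$ in $\preceq_{se}$, as $T$ is competitive and injective — injectivity of $T$ here follows because $\det J_T>0$ everywhere and $T$ maps $[0,\infty)^2$ into a rectangle, or can be checked directly as in Example 1). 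Hence the endpoints of $\mathcal{C}$ are on $\partial\mathcal{R}$. One endpoint is $(0,0)$: the curve is increasing and tangent at $E_t$ to a vector pointing into $\mathcal{Q}_1(E_t)\cup\mathcal{Q}_3(E_t)$, its lower branch runs into $\mathcal{Q}_3(E_t)$ and must terminate on the boundary of the quadrant $[0,\infty)^2$, and since $\mathcal{C}$ lies in the basin of $E_t$ it cannot meet the invariant semiaxes except at a point whose forward orbit converges to $E_t$ — on the semiaxes the only such point is $(0,0)$ (whose forward orbit is constant and, being a repeller, lies in no nontrivial basin, so in fact the branch must approach $(0,0)$ without the orbit argument: the branch exits through a coordinate axis only at the corner). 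The other endpoint is unbounded: it cannot be a point of a coordinate semiaxis other than via the origin, and $\mathcal{R}$ is unbounded, so the upper branch escapes to infinity, giving the unbounded increasing curve with endpoint $(0,0)$. Finally, Theorem \ref{th: invariant curve} gives that $\mathcal{C}$ is a subset of $W^s(E_t)$; for equality I would argue as in Example 1 that the curves $\mathcal{C}=\mathcal{C}_{E_t}$ for distinct $t$ are pairwise disjoint (two points on different such curves are strictly ordered in $\preceq_{se}$ and converge to different, $\preceq_{se}$-ordered equilibria, which is consistent, but a common point would force a common limit), and that by statement i they exhaust $\mathcal{R}\setminus\{\text{semiaxes}\}$, whence each $\mathcal{C}_{E_t}$ is exactly $W^s(E_t)$.

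Statement iii is the continuity of $T^*(x,y):=\lim_n T^n(x,y)$; this is asserted in the paragraph preceding the theorem "with an argument similar to the one used in \cite{BKK1}," so I would simply cite that, or reproduce the short argument: $T^*$ is a pointwise limit of continuous maps, its image is the compact arc $\mathcal{E}\cup\{E_0\}$, $T^*\circ T=T^*$, and on a compact invariant rectangle the convergence $T^n\to T^*$ is in fact uniform on compacta because each nonzero orbit is eventually componentwise monotone with limit in a one-dimensional set — standard Dini-type reasoning then upgrades pointwise to continuous.

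\textbf{Main obstacle.} The delicate point is the endpoint analysis for $\mathcal{C}$ — verifying rigorously that one endpoint is exactly the corner $(0,0)$ and the other is at infinity, rather than some other boundary point, and in particular ruling out that a branch of $\mathcal{C}$ terminates at an interior point of a coordinate semiaxis. This requires combining the "endpoints in $\partial\mathcal{R}$" conclusion of Theorem \ref{th: suff cond for boundary endpoints}(ii) with the monotone (increasing) character of $\mathcal{C}$, the invariance of the semiaxes, and the fact that $\mathcal{C}$ lies in the basin of $E_t$ while the semiaxis dynamics converges to $E_0$ or $E_1\ne E_t$; assembling these into a clean argument, and separately establishing the injectivity of $T$ and the disjointness/exhaustion of the family $\{\mathcal{C}_{E_t}\}$ needed for $\mathcal{C}=W^s(E_t)$, is where the real work lies. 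Everything else is hypothesis-checking.
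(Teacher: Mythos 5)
Your proposal follows the same route as the paper, which in fact offers far less detail: the paper obtains part i from Theorem \ref{thm: demottoni} via \cite{CaKuLaMe} plus the observation that the origin is a repeller, part iii from the \cite{BKK1}-type continuity argument, and part ii from a bare ``application of Theorem \ref{th: invariant curve},'' with no endpoint analysis at all. So the substance of your write-up is exactly the paper's argument, fleshed out.

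One of the steps you add does not survive scrutiny, namely the appeal to Theorem \ref{th: suff cond for boundary endpoints}(ii). That condition requires that $T$ have \emph{no fixed points in} $\Delta=\mathcal{R}\cap\mathrm{int}\,(\mathcal{Q}_1(E_t)\cup\mathcal{Q}_3(E_t))$, and you check this only against the family $\mathcal{E}$; but the origin is also a fixed point of $T$, and for every $0<t<1$ it lies in $\mathrm{int}\,\mathcal{Q}_3(E_t)\cap[0,\infty)^2\subset\Delta$, so the hypothesis fails (the companion requirement that $T({\rm x})=E_t$ have no solutions in $\Delta$ is also shakier near the axes than your sketch suggests, and ``$\det J_T>0$ everywhere'' does not by itself give injectivity). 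The repair is cheap and stays inside your framework: use the last sentence of Theorem \ref{th: invariant curve} directly. Any endpoint of $\mathcal{C}$ in $\mathrm{int}\,\mathcal{R}$ must be a fixed point or a minimal period-two point; there are no minimal period-two points because every orbit converges to an equilibrium, and the only interior fixed points are the $E_s$ with $0<s<1$, $s\neq t$, which are strictly $\preceq_{se}$-ordered with $E_t$ and hence not $\preceq_{ne}$-comparable to it, so none can arise as a limit of points of the increasing curve $\mathcal{C}$. Both endpoints therefore lie on the coordinate semiaxes, and your remaining argument (invariance of the semiaxes, semiaxis dynamics converging to $E_0$ or $E_1\neq E_t$, the repelling origin) identifying the lower endpoint with $(0,0)$ and forcing the upper branch to be unbounded goes through; note that this identification, and the exhaustion argument giving $\mathcal{C}=W^s(E_t)$ rather than mere inclusion, are points the paper asserts without proof, so you are right to single them out as where the real work lies.
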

Statement ii excludes  equilibria of the form
$(0,\overline{y})$ and $(\overline{x},0)$  since the hypotheses
of Theorem \ref{th: invariant curve} are not satisfied  at these points.
\end{example}

\begin{example}
\label{ex: period two}
A difference equation with a continuum of period-two points along a branch of a hyperbola.
\rm
Consider the second order difference equation
\begin{equation}
\label{eq: period2trichamleh}
x_{n+1} = 1 + \frac{x_{n-1}}{x_n}, \quad n=0,1,2,\ldots\, ,
\end{equation}
where  the initial conditions $x_{-1}, x_0$ are positive. This equation was considered in \cite{AGGL, BKK1, KUL} and its global dynamics has been settled completely in \cite{AGGL}. Here we give more precise description of the dynamics of (\ref{eq: period2trichamleh}).
The map
\begin{equation}
T(x,y) = \left(
y , 1 + \frac{x}{y}
\right)
\end{equation}
 associated to (\ref{eq: period2trichamleh})
has a unique fixed point $(2, 2)$. The second iterate of $T$,
\begin{equation}
T^2(x,y) = \left(
1 + \frac{x}{y} , 1 + \frac{y^2}{x + y}
\right)\, ,
\end{equation}
is strongly competitive in the interior of first quadrant
and has an infinite number of fixed points $(\overline{x},\overline{y})$.
The collection of fixed points of $T^2$ (=period-two points of $T$) is the set
$$\mathcal{H} = \{\, (x,y) \in (0,\infty)^2\, :\, x + y = x\, y \, \}\, .
$$
The eigenvalues of $T^2$ at  $(\overline{x},\overline{y})\in \mathcal{H}$ are
$1$ and
$\frac{1}{\overline{x}\; \overline{y}} < 1$,
and the eigenvector of $T^2$ at  $(\overline{x},\overline{y})$ associated with $\lambda = \frac{1}{\overline{x}\; \overline{y}}$ is $(\overline{x}, 1)$.
The Jacobian matrix of $T^2$ is
$$
J_{T^2}(u,v) =    \left(
\begin{array}{cc}
\frac{1}{v}
         &
     -\frac{u}{v^2}
   \\ \\
   - \frac{v^2}{(u+v)^2}
      &
   \frac{v(2u + v)}{(u+v)^2}
         \end{array}
         \right)\ ,
$$
thus  $\det J_{T^2}(u,v) =\frac{1}{u + v} > 0$ for $(u,v) \in (0,\infty)^2$.
In addition, direct verification shows that $T^2$ is injective.
Thus all hypotheses of Theorem \ref{th: invariant curve}
are satisfied by $T^2$, so for every fixed point $(\overline{x},\overline{y})$ of $T^2$
(consequently, for every period-two point of $T$),
there exists an unbounded increasing invariant curve
$\mathcal{C}_{(\overline{x},\overline{y})}$ which is a subset of the basin of the attraction
of  $(\overline{x},\overline{y})$. Furthermore, it can be shown that all conditions of
deMottoni-Schiaffino theorem are satisfied and
so every solution of (\ref{eq: period2trichamleh})
converges to a period-two solution.
In addition, with an argument similar to the one used in \cite{BKK1},
applied to $T^2$, we may conclude
that, given any solution to Eq.(\ref{eq: period2trichamleh}),
the limiting period-two solution $(\overline{x},\overline{y})$
 depends continuously  on the initial condition $(x_0, y_0)$.
 That is, if $T^*(x,y) := \lim T^n(x,y)$, then
$T^*$ is continuous.
 Thus we have the following result.
\begin{theorem} The following statements are true for equation (\ref{eq: period2trichamleh}).
\begin{itemize}
\item[\rm i]
Every period-two solution converges to a period-two solution $(\overline{x},\overline{y})\in\mathcal{H}$.
\item[\rm ii]
For every period-two solution $(\overline{x},\overline{y}) \in \mathcal{H}$,
the stable set $W^s_{(\overline{x},\overline{y})}$ is
an unbounded increasing
curve $\mathcal{C}_{(\overline{x},\overline{y})}$.
 \item[\rm iii]
 The limiting period-two solution $(\overline{x},\overline{y})$
 depends continuously  on the initial condition $(x_0, y_0)$.
\end{itemize}
\end{theorem}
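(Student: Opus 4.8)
I would treat (i) and (iii) first, since they need almost nothing beyond what is recorded just above the theorem. For (i): $T^{2}$ is injective and $\det J_{T^{2}}(u,v)=1/(u+v)>0$ on $(0,\infty)^{2}$, so by Theorem \ref{thm: suff cond O+} the map $T^{2}$ satisfies $(O+)$; since every solution of (\ref{eq: period2trichamleh}) is bounded (and eventually $>1$, hence bounded away from the axes), each $T^{2}$-orbit has compact closure, and Theorem \ref{thm: demottoni} gives convergence of $\{T^{2n}(x,y)\}$ to a fixed point of $T^{2}$, i.e.\ to a point of $\mathcal{H}$. The same remark shows $T^{2}$ has no minimal period-two points: a genuine $2$-cycle of $T^{2}$ is a finite, hence compact, orbit whose $\omega$-limit set would have to be a single fixed point. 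Statement (iii) is the continuity of $T^{\ast}:=\lim_{n}T^{2n}$, obtained by transcribing the argument of \cite{BKK1} to $T^{2}$, exactly as indicated above.

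The substance is (ii). Fix $p=(\overline{x},\overline{y})\in\mathcal{H}$. I would record that the hypotheses of Theorems \ref{th: invariant curve}, \ref{th: smooth manifold}, and \ref{th: basins} hold for $T^{2}$ at $p$: $T^{2}$ is competitive, real-analytic and strongly competitive near $p$, $J_{T^{2}}$ is invertible on $(0,\infty)^{2}$, the eigenvalue $\lambda=1/(\overline{x}\,\overline{y})<1$ of $J_{T^{2}}(p)$ has eigenspace $E^{\lambda}=\mathrm{span}\,(\overline{x},1)$, which is not a coordinate axis, and the remaining eigenvalue equals $1$. Theorems \ref{th: invariant curve} and \ref{th: smooth manifold} then produce the invariant, strictly increasing, $C^{\infty}$ curve $\mathcal{C}_{p}$ through $p$, tangent to $(\overline{x},1)$ at $p$, with $\mathcal{C}_{p}\subseteq W^{s}_{p}$ (the stable set of $p$ for $T^{2}$). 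Since $T^{2}$ has no minimal period-two points and its fixed-point set $\mathcal{H}$ is a strictly decreasing curve (hence disjoint from $\Delta:=\mathcal{R}\cap\mathrm{int}\,(\mathcal{Q}_{1}(p)\cup\mathcal{Q}_{3}(p))$), condition (i) of Theorem \ref{th: suff cond for boundary endpoints} applies, so the endpoints of $\mathcal{C}_{p}$ lie in $\partial\mathcal{R}$; an increasing curve cannot terminate on a coordinate half-axis in the up-right direction, so $\mathcal{C}_{p}$ is unbounded. By Theorem \ref{th: basins}, $\mathcal{C}_{p}$ separates $\mathcal{R}$ into the invariant regions $\mathcal{W}_{-}^{(p)}$ and $\mathcal{W}_{+}^{(p)}$, and every point of $\mathcal{W}_{-}^{(p)}$ (resp.\ $\mathcal{W}_{+}^{(p)}$) has $T^{2}$-orbit eventually inside $\mathrm{int}\,\mathcal{Q}_{2}(p)$ (resp.\ $\mathrm{int}\,\mathcal{Q}_{4}(p)$).

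It remains to prove the reverse inclusion $W^{s}_{p}\subseteq\mathcal{C}_{p}$. I would first establish the structure of the whole family $\{\mathcal{C}_{q}:q\in\mathcal{H}\}$: these curves are pairwise disjoint (a common point would converge under $T^{2}$ to two distinct points of $\mathcal{H}$), and, since each is a connected arc with endpoints in $\partial\mathcal{R}$, it lies in a single component of $\mathcal{R}\setminus\mathcal{C}_{q'}$ for $q'\neq q$; as the point $q\in\mathcal{C}_{q}$ is $\preceq_{se}$-comparable to $q'$, this gives $q\prec_{se}q'\Rightarrow\mathcal{C}_{q}\subseteq\mathcal{W}_{-}^{(q')}$ and $q'\prec_{se}q\Rightarrow\mathcal{C}_{q}\subseteq\mathcal{W}_{+}^{(q')}$, so the regions $\mathcal{W}_{-}^{(q)}$ are nested consistently with the $\preceq_{se}$-order on $\mathcal{H}$. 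Now suppose $x\in W^{s}_{p}\setminus\mathcal{C}_{p}$, say $x\in\mathcal{W}_{-}^{(p)}$; replacing $x$ by a late iterate $z=T^{2n_{0}}(x)\in\mathrm{int}\,\mathcal{Q}_{2}(p)$ arbitrarily close to $p$, and using Theorem \ref{thm: demottoni} (the orbit is eventually componentwise monotone and, approaching $p$ from within $\mathrm{int}\,\mathcal{Q}_{2}(p)$, eventually $\preceq_{se}$-increasing), I would invoke the local picture of $T^{2}$ at the non-hyperbolic fixed point $p$: the eigenvalue $1$ is carried by the eigenvector tangent to the curve of fixed points $\mathcal{H}$, so $\mathcal{H}$ is a center manifold, $\mathcal{C}_{p}$ is the strong stable manifold, and the strong stable leaves $\{\mathcal{C}_{q}\}$ foliate a neighborhood of $p$, each leaf $\mathcal{C}_{q}$ converging under $T^{2}$ to $q$. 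Hence $z$ lies on exactly one leaf and converges to the corresponding $q$; since $\lim T^{2n}(z)=p$, that leaf is $\mathcal{C}_{p}$, so $z\in\mathcal{C}_{p}$, and by invariance $x\in\mathcal{C}_{p}$ — a contradiction. The case $x\in\mathcal{W}_{+}^{(p)}$ is symmetric. This yields $W^{s}_{p}=\mathcal{C}_{p}$, i.e.\ statement (ii); equivalently, $\bigcup_{q\in\mathcal{H}}\mathcal{C}_{q}=\mathcal{R}$.

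\emph{Main obstacle.} Everything up to the reverse inclusion is bookkeeping with Theorems \ref{th: invariant curve}--\ref{th: basins}. The hard part is exactly $W^{s}_{p}\subseteq\mathcal{C}_{p}$: Theorem \ref{th: basins}(B) only drives orbits off $\mathcal{C}_{p}$ into $\mathrm{int}\,\mathcal{Q}_{2}(p)$ or $\mathrm{int}\,\mathcal{Q}_{4}(p)$, and since $p$ is the very corner of those quadrants this does not by itself preclude convergence to $p$. Closing the gap requires the local center-manifold description at the non-hyperbolic point $p$ — that the stable set of $p$ coincides with its strong stable leaf $\mathcal{C}_{p}$, precisely because the eigenvalue-$1$ direction is filled by the fixed-point curve $\mathcal{H}$ — together with a continuity/connectedness argument (using continuity of $T^{\ast}$ and connectedness of $\mathcal{W}_{\pm}^{(p)}$) to promote this local statement to the global identification of the fibers of $T^{\ast}$ with the curves $\mathcal{C}_{q}$.
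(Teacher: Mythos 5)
Writing $p=(\overline{x},\overline{y})$ as you do: your treatment of (i), (iii), and of the existence, unboundedness, and forward inclusion $\mathcal{C}_{p}\subseteq W^s_{p}$ in (ii) is essentially the paper's own argument. The paper likewise gets $(O+)$ for $T^2$ from injectivity and $\det J_{T^2}(u,v)=1/(u+v)>0$ via Theorem \ref{thm: suff cond O+}, convergence to $\mathcal{H}$ (and absence of minimal period-two points of $T^2$) from Theorem \ref{thm: demottoni}, the unbounded increasing curves from Theorem \ref{th: invariant curve} together with Theorem \ref{th: suff cond for boundary endpoints}, and continuity of $T^*$ by transplanting the argument of \cite{BKK1} to $T^2$. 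Your observations that the curves $\mathcal{C}_q$ are pairwise disjoint and consistently nested are correct and are a useful sharpening.

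The divergence is at the reverse inclusion $W^s_{p}\subseteq\mathcal{C}_{p}$. The paper does not argue this step at all -- the theorem sits at the end of an informal example whose preceding discussion establishes only that $\mathcal{C}_p$ is a subset of the basin -- so you are right to single it out as the real content of (ii). But your proposed resolution does not close it: the claim that the leaves $\{\mathcal{C}_q\}_{q\in\mathcal{H}}$ foliate a neighborhood of $p$, with every nearby point lying on exactly one leaf and converging to that leaf's base point, is logically the same as the statement $\bigcup_{q}\mathcal{C}_q=\mathcal{R}$ that you are trying to prove, and it does not follow from Theorems \ref{th: invariant curve}--\ref{th: 2 and 4 all cases part 2}; it would require the strong stable foliation theorem for the normally attracting curve of fixed points $\mathcal{H}$, which is nowhere established in the paper. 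A route that stays inside the paper's toolkit: for $q\in\mathcal{H}$ with $q\prec_{se}p$ (strictly in both coordinates, since $\mathcal{H}$ is strictly decreasing), Theorem \ref{th: basins}(i) applied at $q$ shows that every orbit starting in $\mathcal{W}_-^{(q)}$ accumulates only on the closed set $\mathcal{Q}_2(q)$, which has positive distance from $p$; hence $W^s_{p}\cap\bigl(\mathcal{C}_q\cup\mathcal{W}_-^{(q)}\bigr)=\emptyset$, and symmetrically for $p\prec_{se}q$. This squeezes $W^s_{p}$ into $\bigcap_{q\prec_{se}p}\mathcal{W}_+^{(q)}\cap\bigcap_{p\prec_{se}q}\mathcal{W}_-^{(q)}$, and identifying that intersection with $\mathcal{C}_p$ is exactly where continuity of $T^*$ (equivalently, continuous dependence of $\mathcal{C}_q$ on $q$) must do the remaining work. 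As written, your proof of (ii) therefore still has a gap at its central step -- though it is the same step the paper itself leaves unaddressed.
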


\end{example}

\begin{example}
A system with an isolated non-hyperbolic interior equilibrium
which is of oscillatory type.
\rm
The  system %
\begin{equation}
\label{eq: CAKULAME(15,21)}
\begin{array}{rcl}
x_{n+1} & = &  \displaystyle  \frac{\beta _{1}x_{n}}{B_{1}x_{n}+y_{n}} \\ \\
y_{n+1} & = &  \displaystyle  \frac{\alpha _{2}+\gamma _{2}y_{n}}{x_{n}}
\end{array}\ ,
\quad n=1,2,\ldots
\end{equation}
where all the parameters are positive and the initial conditions $x_0, y_0$ are non-negative and such that $x_0 + y_0 > 0$
was mentioned in \cite{CaKuLaMe} as a special case of system (\ref{system1}), and was investigated
in detail in \cite{GDKN1}.
When the condition
\begin{equation}
\label{eq: conditions eq: CAKULAME(15,21)}
\beta _{1}-B_{1}\gamma _{2}=2\sqrt{B_{1}\alpha _{2}}
\end{equation}
is satisfied, system (\ref{eq: CAKULAME(15,21)}) has a unique equilibrium point
$E=\left( \tfrac{B_{1}\gamma _{2}+\beta _{1}}{2B_{1}},\tfrac{\beta
_{1}-B_{1}\gamma _{2}}{2}\right)$  which is  nonhyperbolic. The eigenvalues of the linearized system at $E$ are
$$
\lambda_1 = 1, \quad
\lambda_2 =
-\frac{(\beta_1-B_1\,\gamma_2)^2}{2\,\beta_1\,B_1\,(\beta_1+B_1\,\gamma_2)}\, ,
$$
with corresponding eigenvectors
${\rm e}_1 = \left(\,-1\, ,\, B_1\,\right)$,
and
${\rm e}_2 =
\left(
2 \,\beta_1\,B_1\, (\beta_1-B_1\,\gamma_2\,)\, , \,
(\beta_1+B_1\,\gamma_2)^2
\right)$.
From (\ref{eq: conditions eq: CAKULAME(15,21)}) we have
$\lambda_2 \in (-1,0)$, and $E$ is of oscillatory type.
Thus the hypotheses of Theorems \ref{th: invariant curve} and \ref{th: basins}
are satisfied at the equilibrium point, and the conclusions of
Theorems \ref{th: invariant curve} and \ref{th: basins} follow.
Let $\mathcal{C}$, $\mathcal{W}_-$ and $\mathcal{W}_+$ be the sets
given in the conclusion of Theorems \ref{th: invariant curve} and \ref{th: basins}.
We have  the following result.
\begin{theorem}
The unique equilibrium $E$ of system (\ref{eq: CAKULAME(15,21)})
with conditions (\ref{eq: conditions eq: CAKULAME(15,21)}) is non-hyperbolic and semi-stable.
The basin of attraction of $E$ is $\mathcal{C} \cup \mathcal{W}_+$, and the orbit of every point in
$\mathcal{W}_-$ is asymptotic to $(0,\infty)$.
\end{theorem}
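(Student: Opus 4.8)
The plan is to combine the global structure given by Theorems~\ref{th: invariant curve}, \ref{th: suff cond for boundary endpoints} and~\ref{th: basins} with the local analysis at $E$ supplied by Theorem~\ref{th: 2 and 4 all cases part 2}, and then to settle the asymptotics on the two sides of $\mathcal{C}$ using the ($O-$) structure of $T$ together with the elementary bound $x_{n+1}<\beta_1/B_1$. I would work on the rectangular region $\mathcal{R}=(0,\infty)^2$ (any solution of~(\ref{eq: CAKULAME(15,21)}) with $y_0=0$ enters $\mathcal{R}$ after one step); there $T$ is real analytic and strongly competitive, and $E=(E_1,E_2)$ is an interior point with $E_1,E_2>0$ since $\beta_1-B_1\gamma_2=2\sqrt{B_1\alpha_2}>0$. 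A short computation gives $\det J_T(x,y)=-\beta_1\alpha_2/(x(B_1x+y)^2)<0$, so $\det J_T(E)=\lambda_1\lambda_2=\lambda_2<0$, which re-confirms non-hyperbolicity. Solving $T(x,y)=(u,v)$ for $(x,y)$ shows $T$ is injective, hence by Theorem~\ref{thm: suff cond O+} it satisfies ($O-$); by Theorem~\ref{thm: demottoni}, $\{T^{2n}({\rm x})\}$ is eventually componentwise monotone for every ${\rm x}$, and any orbit with compact closure in $\mathcal{R}$ has $\omega$-limit set equal to $\{E\}$ or a minimal period-two orbit. Since $T$ has no minimal period-two points (shown in~\cite{GDKN1}; alternatively the two points of such an orbit would be $\preceq_{se}$-comparable, which contradicts strong competitiveness), every orbit with compact closure in $\mathcal{R}$ converges to $E$.

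Next I would obtain the curve and the basin decomposition. Hypothesis~(b) of Theorem~\ref{th: invariant curve} holds with $\lambda=\lambda_2$, because $0<|\lambda_2|<1=\mu$ and $E^{\lambda_2}={\rm span}\,{\rm e}_2$ lies in the open first quadrant and hence is not a coordinate axis; this gives the increasing invariant curve $\mathcal{C}\ni E$ contained in the basin of $E$. Since $T$ has no period-two points in $\Delta$, $\det J_T(E)<0$, and the only solution of $T({\rm x})=E$ is ${\rm x}=E\notin\Delta$ (by injectivity), Theorem~\ref{th: suff cond for boundary endpoints}(iii) gives that the endpoints of $\mathcal{C}$ lie in $\partial\mathcal{R}$. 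Theorem~\ref{th: basins} now applies ($E$ is interior and $T$ is $C^2$ and strongly competitive near $E$): it yields the decomposition of $\mathcal{R}$ into $\mathcal{C}$ and the two invariant components $\mathcal{W}_-$, $\mathcal{W}_+$ of $\mathcal{R}\setminus\mathcal{C}$, and the fact that every orbit from $\mathcal{W}_-$ (resp.\ $\mathcal{W}_+$) eventually lies in ${\rm int}\,\mathcal{Q}_2(E)$ (resp.\ ${\rm int}\,\mathcal{Q}_4(E)$).

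The local picture at $E$ follows from Theorem~\ref{th: 2 and 4 all cases part 2} applied to ${\rm v}={\rm e}_1=(-1,B_1)$, which has ${\rm v}^{(1)}{\rm v}^{(2)}=-B_1<0$ and associated eigenvalue $\mu=\lambda_1=1$. Using $B_1E_1+E_2=\beta_1$ and $\alpha_2+\gamma_2E_2=E_1E_2$ (both consequences of~(\ref{eq: conditions eq: CAKULAME(15,21)})), one computes $T(E+t\,{\rm e}_1)^{(1)}=E_1-t$, which is affine in $t$, and $T(E+t\,{\rm e}_1)^{(2)}=B_1E_1^2/(E_1-t)-\gamma_2B_1$, whose Taylor coefficients are $(c_j,d_j)=(0,B_1/E_1^{j-1})$ for $j\ge2$. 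Thus $\ell=2$ is even, $(c_2,d_2)=(0,B_1/E_1)\preceq_{se}(0,0)$, and condition~(c) of the theorem is satisfied (since $d_2\ne0$ and $T(E+t\,{\rm e}_1)^{(1)}$ is affine), whereas conditions~(a) and~(b) fail. Hence case~(iii) applies and supplies an order interval $\mathcal{I}$, a relative neighbourhood of $E$, such that $T^n({\rm x})\to E$ for every ${\rm x}\in\mathcal{I}\cap\mathcal{Q}_4(E)$, while for every ${\rm x}\in\mathcal{I}\cap{\rm int}\,\mathcal{Q}_2(E)$ there is an $N$ with $T^n({\rm x})\notin\mathcal{I}$ for $n\ge N$.

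It then remains to assemble the pieces. By Theorem~\ref{th: invariant curve} the curve $\mathcal{C}$ is in the basin of $E$. For ${\rm x}\in\mathcal{W}_+$, eventually $T^n({\rm x})\in{\rm int}\,\mathcal{Q}_4(E)$, so $x_n\ge E_1$ and $0<y_n\le E_2$ for large $n$; combined with $x_{n+1}<\beta_1/B_1$ and $\alpha_2B_1/\beta_1\le y_{n+1}=(\alpha_2+\gamma_2y_n)/x_n\le(\alpha_2+\gamma_2E_2)/E_1$, this confines the orbit to a compact subset of $\mathcal{R}$, so it converges to $E$; hence $\mathcal{W}_+$ lies in the basin. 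For ${\rm x}\in\mathcal{W}_-$ the orbit cannot converge to $E$: being eventually in ${\rm int}\,\mathcal{Q}_2(E)$, convergence would force it eventually into $\mathcal{I}\cap{\rm int}\,\mathcal{Q}_2(E)$, contradicting the escape property above. By the ($O-$) dichotomy the orbit is then unbounded, and since $x_n<\beta_1/B_1$ the unboundedness lies in the $y$-coordinate; using that $\{y_{2n}\}$ and $\{y_{2n+1}\}$ are eventually monotone one gets $y_n\to\infty$, whence $x_n=\beta_1x_{n-1}/(B_1x_{n-1}+y_{n-1})\to0$, so the orbit is asymptotic to $(0,\infty)$. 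Therefore the basin of attraction of $E$ is exactly $\mathcal{C}\cup\mathcal{W}_+$, every orbit in $\mathcal{W}_-$ is asymptotic to $(0,\infty)$, and $E$ is semi-stable because it lies on the boundary of both $\mathcal{W}_+$ (which it attracts) and $\mathcal{W}_-$ (no point of which converges to $E$). I expect the two main difficulties to be: (i) the non-existence of minimal period-two orbits, on which the whole argument rests and which I would take from~\cite{GDKN1}; and (ii) correctly identifying the applicable case of Theorem~\ref{th: 2 and 4 all cases part 2} — one must notice that conditions~(a),(b) fail, that the theorem is still usable through~(c) because $T(E+t\,{\rm e}_1)^{(1)}$ is affine, and that the datum $(c_\ell,d_\ell)\preceq_{se}(0,0)$ with $\ell$ even is precisely what makes the $\mathcal{Q}_4$-side attracting, consistently with $\mathcal{W}_+$ being the basin.
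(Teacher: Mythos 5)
Your proposal is correct and follows essentially the same skeleton as the paper's proof: Theorems~\ref{th: invariant curve}, \ref{th: suff cond for boundary endpoints} and \ref{th: basins} produce $\mathcal{C}$, $\mathcal{W}_-$, $\mathcal{W}_+$; boundedness of orbits on the $\mathcal{W}_+$ side (via $x_{n+1}\leq \beta_1/B_1$) plus eventual componentwise monotonicity gives convergence to $E$ there; and the local analysis along ${\rm e}_1$, with $T(E+t\,{\rm e}_1)^{(1)}$ affine and $d_2>0$, feeds into Theorem~\ref{th: 2 and 4 all cases part 2} exactly as in the paper (your coefficient $d_2=B_1/E_1=2B_1^2/(\beta_1+B_1\gamma_2)$ is in fact the literal Taylor coefficient; only its sign matters). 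The one place you genuinely diverge is the endgame on $\mathcal{W}_-$: the paper does not argue by contradiction through the escape-from-$\mathcal{I}$ property, but instead takes the subsolution $(w,z)\in\mathcal{Q}_2(E)$ supplied by Theorem~\ref{th: 2 and 4 all cases part 2} with $(x,y)\preceq_{se}(w,z)$, notes its orbit is $\preceq_{se}$-decreasing and cannot converge (the only fixed point in $\mathcal{Q}_2(E)$ is $E$), deduces $z_n\to\infty$ and $w_n\to 0$ from boundedness of $\{w_n\}$, and then squeezes $T^n(x,y)$ by comparison. That route is more direct: it needs neither the ($O-$) dichotomy nor the exclusion of minimal period-two orbits for this step, whereas your version leans on both. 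Two loose ends in your write-up, neither fatal: the parenthetical claim that the two points of a minimal period-two orbit ``would be $\preceq_{se}$-comparable'' is unjustified (period-two points of competitive maps are generally \emph{not} $\preceq_{se}$-comparable --- that is exactly why they can survive strong competitiveness), so you must really rely on the citation there; and ``failure of compact closure in $(0,\infty)^2$'' permits accumulation on the boundary $x=0$ as well as unboundedness, though this also forces $y_{n+1}\geq\alpha_2/x_n\to\infty$, so your conclusion stands after one extra line.
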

\begin{proof}
Let $S:=\{ (x,y) : 0 \leq x \leq \frac{\beta_1}{B_1}, \ 0 \leq y \ \}$.
Since $\frac{\beta_1\,x}{B_1\,x+y} \leq \frac{\beta_1}{B_1}$ for $x\geq 0$, $y \geq 0$, $x+y>0$,
the map $T$ of system (\ref{eq: CAKULAME(15,21)}) satisfies $T([0,\infty)^2\setminus (0,0) ) \subset S$.
Thus $T(\mathcal{C}\cup \mathcal{W}_+) \subset (\mathcal{C}\cup \mathcal{W}_+) \cap S$, which implies that
$T(\mathcal{C}\cup\mathcal{W}_+)$ is bounded.
Since every orbit is eventually coordinate-wise monotone \cite{CaKuLaMe},
it follows that every orbit with initial point in the invariant set
$\mathcal{C}\cup\mathcal{W}_+$ must converge to an equilibrium.
The only equilibrium in $\mathcal{C}\cup\mathcal{W}_+$ is $E$, so we have
$\mathcal{C}\cup\mathcal{W}_+$ is a subset of the basin of attraction of $E$.
If $(x,y)$ is in $\mathcal{W}_-$, by Theorem \ref{th: basins} the orbit of $(x,y)$
eventually enters $\mathcal{Q}_2(E)$.  Assume (without loss of generality)
that $(x,y) \in \mbox{\rm int }\mathcal{Q}_2(E)$.
A calculation gives
$$
T(E+ t \,{\rm e}_1)^{(1)} - (E+ t \,{\rm e}_1)^{(1)}  = 0 \quad \mbox{\rm for all } t
$$
and
$$
\frac{1}{2}\,\frac{d}{d\,t^2}\,\displaystyle \left. T(E+ t \,{\rm e}_1)\displaystyle \right|_{t=0} =
\left(0,\frac{1}{\beta_1+B_1\,\gamma_2}\right)\, .
$$
Since in expansion (\ref{eq: taylor}) we have 
$(c_2,d_2) =(0,\frac{1}{\beta_1+B_1\,\gamma_2})$
and
$T(E+ t \,{\rm e}_1)^{(1)}$ is affine in $t$,
by Theorem \ref{th: 2 and 4 all cases part 2} in any relative neighborhood
of $E$ there exists a subsolution $(w,z) \in Q_2(E)$, i.e.,
$T(w,z) \preceq_{se} (w,z)$.
Choose one such $(w,z)$ so that
$(x,y) \preceq_{se} (w,z)$.
Since $T$ is competitive,
  $T^{n+1}(w,z) \preceq T^n(w,z)$ for $n=0,1,2,\ldots$.
The monotonically decreasing sequence $\{T^n(w,z)\}$
in $\mathcal{Q}_2(E)$   is unbounded below,
since if it weren't it would converge to the unique fixed point
in $\mathcal{Q}_2(E)$, namely $E$, which is not possible.
Let $(w_n,z_n) := T^n(w,z)$, $n=0,1,\ldots$.
Then $(w_n,z_n) \in S$ for $n=1,2,\ldots$,
hence $\{w_n\}$ is bounded.
It follows that $\{z_n\}$
 is monotone and unbounded.
From (\ref{eq: CAKULAME(15,21)}) it follows that $w_n \rightarrow 0$.
Since $T^n(x,y) \preceq_{se} (w_n,z_n)$, it follows that
$T^n(x,y) \rightarrow (0,\infty)$.
\end{proof}
\end{example}

%
\begin{example}
A system with a semi-stable non-hyperbolic interior equilibrium.
\rm
We consider an example where explicit computation of
eigenvalues of the Jacobian of the map at equilibrium points is not practical.
Nevertheless, it is possible to establish a result on basins of attraction of
non-hyperbolic equilibria.
The strongly competitive system of difference equations
\begin{equation}
\label{eq: cushing+h}
\begin{array}{rcl}
x_{n+1} & = & \displaystyle \frac{b_1\, x_n}{1+x_n+c_1\, y_{n}} +h_1\\ \\
y_{n+1} & = & \displaystyle \frac{b_2\, y_n}{1+y_n+c_2\, x_{n}} +h_2
\end{array}
\, ,
\quad  n=0,1,\ldots, \quad (x_0,y_0) \in [0,\infty)\times [0,\infty)\, .
\end{equation}
with positive parameters was studied in \cite{BM},
where it was shown that the system has between one and three
equilibria depending on the choice of parameters,
and that the number of equilibria determines global behavior as follows:
if there is only one equilibrium, then it is globally asymptotically stable.
If there are two equilibria, then one is a locally asymptotically stable
(L.A.S.) point
 and the other one is nonhyperbolic.
If there are three equilibria,
then they are linearly ordered in the south-east ordering of the plane,
and consist of a  L.A.S.  point,
a saddle point, and another L.A.S. point.
We have the following result for the non-hyperbolic case.
\begin{theorem}
\label{th: cushing+h}
In the case where system (\ref{eq: cushing+h})
 has exactly two distinct equilibria in $[0,\infty)^2$ given by
a locally asymptotically stable point $(\overline{x}_1,\overline{y}_1)$
and a non-hyperbolic fixed point $(\overline{x}_2,\overline{y}_2)$,
there exists a curve $\mathcal{C}$ through $(\overline{x}_2,\overline{y}_2)$
as  in Theorem \ref{th: invariant curve} and sets $\mathcal{W}_-$ and $\mathcal{W}_+$
as  in Theorem \ref{th: basins}
such that
one of  $\mathcal{W}_-$ or $\mathcal{W}_+$
is  the basin of attraction of $(\overline{x}_1,\overline{y}_1)$,
and the complement of such set is the basin of attraction of
$(\overline{x}_2,\overline{y}_2)$
\end{theorem}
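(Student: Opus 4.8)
The plan is to obtain the curve $\mathcal{C}$ and the sets $\mathcal{W}_-,\mathcal{W}_+$ by applying Theorems~\ref{th: invariant curve}, \ref{th: suff cond for boundary endpoints} and \ref{th: basins} to the map $T$ of system (\ref{eq: cushing+h}) at the non-hyperbolic equilibrium $(\overline{x}_2,\overline{y}_2)$, and then to combine the local behaviour near $(\overline{x}_2,\overline{y}_2)$ with the global fact that every orbit converges to a fixed point. First I would record the standing properties of $T$ on $\mathcal{R}=[0,\infty)^2$: it is strongly competitive and real-analytic; its image lies in $[h_1,b_1+h_1]\times[h_2,b_2+h_2]$, so every orbit is bounded; a direct computation gives $\det J_T>0$ throughout, and $T$ is injective, so by Theorem~\ref{thm: suff cond O+} the map satisfies $(O+)$, whence by Theorem~\ref{thm: demottoni} every orbit is eventually componentwise monotone and, being bounded, converges to a fixed point of $T$. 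Since $T$ has exactly two fixed points and, by \cite{BM}, they are comparable in $\preceq_{se}$, the point $(\overline{x}_1,\overline{y}_1)$ lies in ${\rm int}\,\mathcal{Q}_2((\overline{x}_2,\overline{y}_2))$ or in ${\rm int}\,\mathcal{Q}_4((\overline{x}_2,\overline{y}_2))$. Finally, because $T$ is strongly competitive and $(\overline{x}_2,\overline{y}_2)$ is non-hyperbolic with $\det J_T>0$ there, the Jacobian $J_T$ at $(\overline{x}_2,\overline{y}_2)$ has eigenvalues $\mu=1$ and $\lambda\in(0,1)$, with eigenspaces off the coordinate axes.

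These facts verify the hypotheses of Theorem~\ref{th: invariant curve} at $(\overline{x}_2,\overline{y}_2)$, giving the curve $\mathcal{C}$, tangential to $E^\lambda$ and contained in the basin of $(\overline{x}_2,\overline{y}_2)$. Its endpoints lie in $\partial\mathcal{R}$ by part (ii) of Theorem~\ref{th: suff cond for boundary endpoints}: $\det J_T>0$; $\Delta$ contains no fixed point, since the only other equilibrium is $\preceq_{se}$-comparable to $(\overline{x}_2,\overline{y}_2)$ and hence lies outside ${\rm int}(\mathcal{Q}_1((\overline{x}_2,\overline{y}_2))\cup\mathcal{Q}_3((\overline{x}_2,\overline{y}_2)))$; and by injectivity the only solution of $T({\rm x})=(\overline{x}_2,\overline{y}_2)$ is ${\rm x}=(\overline{x}_2,\overline{y}_2)\notin\Delta$. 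Part (A) of Theorem~\ref{th: basins} then produces the connected components $\mathcal{W}_-,\mathcal{W}_+$ of $\mathcal{R}\setminus\mathcal{C}$, and part (B) applies since $(\overline{x}_2,\overline{y}_2)$ is interior and $T$ is $C^2$ and strongly competitive near it; in particular, for ${\rm x}\in\mathcal{W}_-$ the orbit is eventually in ${\rm int}\,\mathcal{Q}_2((\overline{x}_2,\overline{y}_2))$, and for ${\rm x}\in\mathcal{W}_+$ eventually in ${\rm int}\,\mathcal{Q}_4((\overline{x}_2,\overline{y}_2))$. Because $\mathcal{C}$ is the graph of an increasing function through $(\overline{x}_2,\overline{y}_2)$, it is disjoint from ${\rm int}\,\mathcal{Q}_2((\overline{x}_2,\overline{y}_2))\cup{\rm int}\,\mathcal{Q}_4((\overline{x}_2,\overline{y}_2))$, and since $(\overline{x}_2,\overline{y}_2)\preceq_{se}{\rm x}$ for every ${\rm x}\in{\rm int}\,\mathcal{Q}_4((\overline{x}_2,\overline{y}_2))$ and ${\rm x}\preceq_{se}(\overline{x}_2,\overline{y}_2)$ for every ${\rm x}\in{\rm int}\,\mathcal{Q}_2((\overline{x}_2,\overline{y}_2))$, it follows that ${\rm int}\,\mathcal{Q}_4((\overline{x}_2,\overline{y}_2))\subset\mathcal{W}_+$ and ${\rm int}\,\mathcal{Q}_2((\overline{x}_2,\overline{y}_2))\subset\mathcal{W}_-$. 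Hence $(\overline{x}_1,\overline{y}_1)$ lies in $\mathcal{W}_+$ or in $\mathcal{W}_-$; assume without loss of generality the former (the other case is symmetric, with $\mathcal{W}_-$ in place of $\mathcal{W}_+$).

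The heart of the matter is to show that $(\overline{x}_2,\overline{y}_2)$ attracts no orbit starting in $\mathcal{W}_+$. Here I would use that $(\overline{x}_2,\overline{y}_2)$ is semi-stable (established in \cite{BM}; alternatively, using real-analyticity at $(\overline{x}_2,\overline{y}_2)$, $\mu=1$, the fact that the eigenvector associated with $\mu$ has coordinates of opposite sign, and a computation of the leading coefficient of the Taylor expansion along that eigenvector, Theorem~\ref{th: 2 and 4 all cases part 2} yields the same conclusion): there is a relative neighbourhood $N$ of $(\overline{x}_2,\overline{y}_2)$ and one of the quadrants $\mathcal{Q}\in\{\mathcal{Q}_2((\overline{x}_2,\overline{y}_2)),\mathcal{Q}_4((\overline{x}_2,\overline{y}_2))\}$ such that every orbit started in $N\cap{\rm int}\,\mathcal{Q}$ eventually leaves $N$, and $(\overline{x}_2,\overline{y}_2)$ attracts a full relative neighbourhood on the complementary side of $\mathcal{C}$. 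Suppose the ``repelling quadrant'' $\mathcal{Q}$ were $\mathcal{Q}_2((\overline{x}_2,\overline{y}_2))$, i.e.\ the side containing $\mathcal{W}_-$. Then no orbit of $\mathcal{W}_-$ can converge to $(\overline{x}_2,\overline{y}_2)$: such an orbit would eventually lie in $N\cap{\rm int}\,\mathcal{Q}_2((\overline{x}_2,\overline{y}_2))$ (using that it is eventually in ${\rm int}\,\mathcal{Q}_2((\overline{x}_2,\overline{y}_2))$ by Theorem~\ref{th: basins} while staying in $N$), and therefore would eventually leave $N$, contradicting convergence. But $\mathcal{W}_-$ is nonempty and invariant, so each of its orbits converges to a fixed point of $T$ lying in $\overline{\mathcal{W}_-}\subseteq\mathcal{W}_-\cup\mathcal{C}$; since $(\overline{x}_1,\overline{y}_1)\in\mathcal{W}_+$, which is open and disjoint from $\overline{\mathcal{W}_-}$, the only such fixed point is $(\overline{x}_2,\overline{y}_2)$ — a contradiction. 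Hence the repelling quadrant is $\mathcal{Q}_4((\overline{x}_2,\overline{y}_2))$.

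It follows at once that no orbit of $\mathcal{W}_+$ converges to $(\overline{x}_2,\overline{y}_2)$, since any such orbit is eventually in $N\cap{\rm int}\,\mathcal{Q}_4((\overline{x}_2,\overline{y}_2))$ and hence eventually leaves $N$. Every orbit of $\mathcal{W}_+$ therefore converges to a fixed point of $T$ in $\overline{\mathcal{W}_+}\subseteq\mathcal{W}_+\cup\mathcal{C}$ other than $(\overline{x}_2,\overline{y}_2)$, which forces it to be $(\overline{x}_1,\overline{y}_1)$; and, by the same reasoning as above, every orbit of $\mathcal{W}_-$ converges to the only fixed point of $T$ in $\overline{\mathcal{W}_-}$, namely $(\overline{x}_2,\overline{y}_2)$. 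Together with $\mathcal{C}\subset$ (basin of $(\overline{x}_2,\overline{y}_2)$) from Theorem~\ref{th: invariant curve}, and since $\mathcal{R}$ is the disjoint union of $\mathcal{C}$, $\mathcal{W}_-$ and $\mathcal{W}_+$ while the basins of the two distinct fixed points are disjoint, we conclude that $\mathcal{W}_+$ is exactly the basin of attraction of $(\overline{x}_1,\overline{y}_1)$ and $\mathcal{C}\cup\mathcal{W}_-$ is exactly the basin of attraction of $(\overline{x}_2,\overline{y}_2)$. The main obstacle is the step in the previous paragraph: extracting from the purely qualitative input of \cite{BM} the precise statement that $(\overline{x}_2,\overline{y}_2)$ attracts from exactly one side of $\mathcal{C}$ and repels (in the ``orbits leave a neighbourhood'' sense) from the other, and then verifying that the repelling side is forced to be the one containing $(\overline{x}_1,\overline{y}_1)$; once this is in place, the rest is bookkeeping with the invariance, eventual monotonicity, and separation properties already established.
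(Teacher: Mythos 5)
Your overall architecture matches the paper's: apply Theorems \ref{th: invariant curve} and \ref{th: basins} at the non-hyperbolic equilibrium, observe that the component of $\mathcal{R}\setminus\mathcal{C}$ not containing the L.A.S.\ point together with $\mathcal{C}$ lies in the basin of the non-hyperbolic point because every orbit converges to some fixed point, and then show that orbits on the other side cannot converge to the non-hyperbolic point. Your device for determining \emph{which} quadrant is the repelling one by contradiction (if the repelling side were the one without the L.A.S.\ point, its orbits would have nowhere left to converge) is a genuinely different, and rather elegant, way to orient the picture; the paper simply fixes the orientation by the location of the L.A.S.\ point and works directly on that side.

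However, there is a genuine gap at the step you yourself flag as the main obstacle: you never actually establish the semi-stability dichotomy, i.e.\ that there exist points arbitrarily close to $(\overline{x}_2,\overline{y}_2)$ on one side of $\mathcal{C}$ whose orbits leave a fixed neighborhood. Your first route, citing \cite{BM}, is not available: the paper treats precisely this as the thing to be proved (the example is introduced as expanding and completing \cite{BM}), and what it imports from \cite{BM} is only the geometry of the critical curves $C_1$, $C_2$ (tangential intersection at the non-hyperbolic point), the eigenvalue data, and global convergence to equilibria. Your second route, Theorem \ref{th: 2 and 4 all cases part 2}, requires identifying the first nonvanishing Taylor pair $(c_\ell,d_\ell)$ along the eigenvector of $\mu=1$ and verifying $c_\ell d_\ell<0$ (or one of the affine alternatives), together with the parity of $\ell$ --- none of which you compute, and which the paper explicitly declares impractical for this system. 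Note that your contradiction argument cannot substitute for this input: if the non-hyperbolic point attracted a full neighborhood (odd $\ell$ with the attracting sign), every orbit in the component not containing the L.A.S.\ point would still converge to it and no contradiction with global convergence would arise, yet the theorem would fail because some points on the other side of $\mathcal{C}$ would also lie in its basin. The paper supplies the missing input geometrically: it interpolates a decreasing curve $C_3$ strictly between $C_1$ and $C_2$ near their tangential intersection and checks that, for small $\delta$, every point of $C_3$ in that ball is a subsolution, $T(x,y)\preceq_{se}(x,y)$; such a point $\preceq_{se}$-dominates any orbit that has entered ${\rm int}\,\mathcal{Q}_2$ of the non-hyperbolic point, and its own orbit decreases away, so those orbits cannot converge to the non-hyperbolic point. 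You would need this construction, or an actual verification of the hypotheses of Theorem \ref{th: 2 and 4 all cases part 2}, to close the argument. (A smaller issue: that $\mu=1$ with $\lambda\in(0,1)$, rather than $\lambda=1<\mu$, does not follow merely from strong competitiveness, non-hyperbolicity and $\det J_T>0$; the paper takes it from Lemma 4.2 and Theorem 2.1 of \cite{BM}.)
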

\begin{proof}
it is shown in \cite{BM} that
under the hypothesis of the theorem, the intersection of
$(h_1,\infty)\times (h_2,\infty)$
with each of the critical curves
\begin{equation*}
\label{eq: critical curves system}
\begin{array}{rcl}
C_{1}
&  =& \{(x,\,y) \in {[0,\infty)}^{\,2} : x^2 + c_1 \, x \, y +(1-b_1-h_1)\, x
- c_1\,h_1\,y-h_1 =  0 \}  \\
\\
C_{2}
&  =& \{ (x,\,y) \in {[0,\infty)}^{2} : y^2 + c_2 \, x \, y +(1-b_2-h_2)\, y-
c_2\,h_2\,x-h_2 =  0\}
\end{array}
\end{equation*}
are the graphs of smooth decreasing functions $y_1(x)$ and $y_2(x)$
of $x$ with
 common points given by the two equilibrium points
of system (\ref{eq: cushing+h}).
Moreover, $C_1$ and $C_2$  intersect
tangentially
at the non-hyperbolic fixed point $(\overline{x}_1,\overline{y}_1)$,
and  transversally at the local attractor
$(\overline{x}_2,\overline{y}_2)$.
By Lemma 4.2  of \cite{BM} and Theorem 2.1 of \cite{BM},
the eigenvalues $\lambda$, $\mu$
of the Jacobian of the map at
$(\overline{x}_1,\overline{y}_1)$ satisfy
$0 < |\lambda|<1$ and $\mu=1$.
Note that there are no periodic points of minimal period-two.
Thus the hypotheses of Theorem \ref{th: invariant curve}  and Theorem \ref{th:
basins} are satisfied, hence there exist sets $\mathcal{C}$,
$\mathcal{W}_-$ and $\mathcal{W}_+$ with the properties specified in the
conclusions
of such theorems.  In particular,
\begin{equation}
\label{eq: disjoint union}
[0,\infty)^2 = \mathcal{C} \cup \mathcal{W}_- \cup \mathcal{W}_+\, ,
\end{equation}
where any two of the sets $\mathcal{C} $, $\mathcal{W}_-$, and
$\mathcal{W}_+$
have no common points.
Since the set of equilibrium points is linearly ordered by $\preceq_{se}$,
it follows that either $(\overline{x}_2,\overline{y}_2) \in \mathcal{W}_-$ or
$(\overline{x}_2,\overline{y}_2) \in \mathcal{W}_+$.
In the rest of the proof we shall
assume that $(\overline{x}_2,\overline{y}_2) \in \mathcal{W}_-$;
the proof in the case $(\overline{x}_2,\overline{y}_2) \in \mathcal{W}_+$
is similar and will be omitted. Then
the only fixed point in the closed invariant set
$\mathcal{C}\cup \mathcal{W}_+$ is $(\overline{x}_1,\overline{y}_1)$,
and since every orbit converges to a fixed point  \cite{BM},
we have that $\mathcal{C}\cup \mathcal{W}_+$ is a subset of the basin of
attraction of $(\overline{x}_1,\overline{y}_1)$.
To complete the proof of the theorem
it is enough to show that $\mathcal{W}_-$ is a subset of the basin of
$(\overline{x}_2,\overline{y}_2)$.
There exists an open interval $I$ centered at $x_1$
such that
$y_1(x) < y_2(x)$ for $x \in I \setminus \{ x_1 \}$.
Set $y_3(x) = \frac{1}{2}(y_1(x) + y_2(x))$, and let
$C_3$ be the graph of $y_3(x)$.
For  small enough $\delta$, the map $T$ satisfies
$T(x,y) \preceq_{se} (x,y)$ for every $(x,y) \in C_3 \cap \mathcal{B}(\,
(\overline{x}_1,\overline{y}_1),\delta)$, where
$\mathcal{B}(\,(\overline{x}_1,\overline{y}_1),\delta)$ is the open disk in $\mathbb{R}^2$
with center $(\overline{x}_1,\overline{y}_1)$  and radius $\delta$.
By Theorem \ref{th: basins}
the orbit of every $(x,y) \in \mathcal{W}_-$ enters the invariant set
$\mathcal{Q}_2(x_2,y_2) \cap (0,\infty)^2$.
Choose $(s,t) \in C_3 \setminus \{(x_1,y_1)\}$
such that $(x,y) \preceq_{se} (s,t) \preceq_{se} (x_1,y_1)$.
Since $T^n(s,t) \preceq_{se} T^n(s,t) \preceq_{se} (s,t)$ for $n=0,1,2,\ldots$,
we have that $T^n(x,y)$ does not converge to $(x_1,y_1)$.
Hence $T^n(x,y)$ converges to $(x_2,y_2)$.
\end{proof}

\end{example}

\section{Proof of the main results}
\label{sec: Proof}
{\bf Proof of Theorem \ref{th: invariant curve}}.
We assume  first that $\overline{\rm x} \in \partial \mathcal{R}$.
Suppose (without loss of generality) that
$Q_1(\overline{\rm x})\cap \mbox{\rm int}(\mathcal{R}) \neq \emptyset$.
See Figure  \ref{fig: boundary case figures}.
%
%
\begin{figure}[!h]
\begin{center}
\setlength{\unitlength}{0.4 in}
\begin{picture}(16,4)
\qbezier(0,1)(1, 1.5)(2,3)
\put(0.5,1.87){$\mathcal{C}$}
\put(0.25,0.45){$\overline{\rm x}$}
\put(2,1.85){$\mathcal{Q}_1(\overline{\rm x})$}
\put(2,0.35){$\mathcal{Q}_4(\overline{\rm x})$}
\put(0,1){\circle*{0.2}}
\multiput(0,1)(0.6,0.0){7}{\line(1,0){0.3}}
\put(0,0){\line(1,0){4}}
\put(4,0){\line(0,1){3}}
\put(0,0){\line(0,1){3}}
\put(0,3){\line(1,0){4}}
\qbezier(6,0)(8, 0.5)(9,3)
\put(7.8,1.80){$\mathcal{C}$}
\put(6.25,0.40){$\overline{\rm x}$}
\put(8.5,0.85){$\mathcal{Q}_1(\overline{\rm x})$}
\put(6,0){\circle*{0.2}}
\put(6,0){\line(1,0){4}}
\put(10,0){\line(0,1){3}}
\put(6,0){\line(0,1){3}}
\put(6,3){\line(1,0){4}}
\qbezier(13.5,0)(15.5, 0.5)(16,3)
\put(15.2,0.65){$\mathcal{C}$}
\put(13.,0.25){$\overline{\rm x}$}
\put(12.25,1.85){$\mathcal{Q}_2(\overline{\rm x})$}
\put(14,1.85){$\mathcal{Q}_1(\overline{\rm x})$}
\put(13.5,0){\circle*{0.2}}
\multiput(13.5,0.2)(0.0,0.6){5}{\line(0,1){0.3}}
\put(12,0){\line(1,0){4}}
\put(16,0){\line(0,1){3}}
\put(12,0){\line(0,1){3}}
\put(12,3){\line(1,0){4}}
\end{picture}
\end{center}
\caption{The three cases where the fixed point
$\overline{\rm x} \in \partial \mathcal{R}$ and
$Q_1(\overline{\rm x}) \cap \mathcal{R} \not =\emptyset$.}
\label{fig: boundary case figures}
\end{figure}
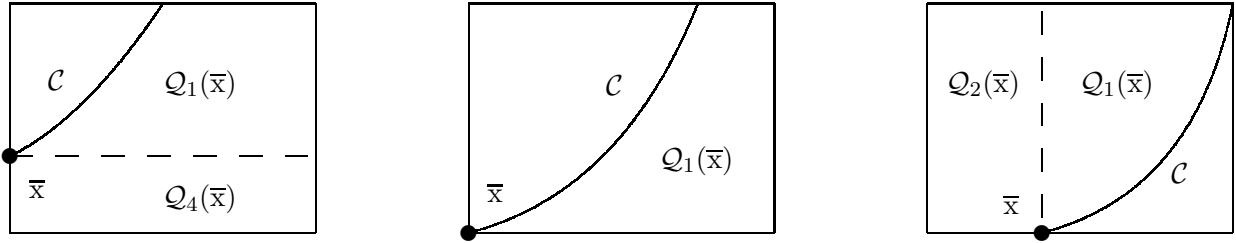
%
%
%

By Lemma 5.1 in page 234 of \cite{Hartman},
there exists a small neighborhood $\mathcal{V}$ of $\overline{\rm x}$
and a locally invariant $C^1$ manifold
$\mathcal{\hat{C}}\subset \mathcal{V}$
that is tangential to $E^\lambda$ at $\overline{\rm x}$
and such that
$T^n({\rm x}) \rightarrow \overline{\rm x}$ for all ${\rm x} \in \mathcal{\hat{C}}$.
Since $T$ is competitive, a unit eigenvector ${\rm v}^\lambda$ associated with
$\lambda$ may be chosen so that ${\rm v}^\lambda$ has non-negative entries.
By hypothesis (b.), the vector ${\rm v}^\lambda$ has positive entries.
If necessary, the diameter of $\mathcal{V}$ may be taken to be small enough
to guarantee that
no two points on $\mathcal{\hat{C}}$
are comparable in the ordering $\preceq_{se}$.
This can be done
since the tangential vector ${\rm v}^\lambda$ has positive entries.
%
 Let $\mathcal{C}$ be the connected component of
 $\{{\rm x} \in \mathcal{R}\cap
 \mathcal{Q}_1(\overline{\rm x}) : (\exists n) \ T^n({\rm x}) \in \mathcal{\hat{C}}\}$
 that contains $\mathcal{\hat{C}}$.
 The set  $\mathcal{C}$  consists of non-comparable
 points in the order $\preceq_{se}$.
 Indeed,  if $\rm v$ and $\rm w$ are two distinct   points in $\mathcal{C}_+$ such that
 ${\rm v} \preceq_{se} {\rm w}$, then $T^n({\rm v}) \preceq_{se} T^n({\rm w})$
 and $T^n({\rm v}) \neq T^n({\rm w})$ for $n \geq 0$
 since the map $T$ is strongly competitive.
 But for $n$ large enough, both $T^n({\rm v})$ and $ T^n({\rm w})$ belong to $\mathcal{\hat{C}}$,
 which consists of non-comparable points. Hence $\mathcal{C}$ consists of non-comparable points.
 The projection of $\mathcal{C}$ onto the first coordinate
 is a connected set, thus it is an interval $J \subset \mathbb{R}$.
 Since points on $\mathcal{C}$ are non-comparable, $\mathcal{C}$ is the graph
 of a strictly increasing function $f(t)$ of $t \in J$.
 If there is a jump discontinuity at $t_0 \in \mathcal{C}$, let $y_-$ and $y_+$ respectively
 be the left and right (distinct) limits of $f(t)$ as $x$ approaches $t_0$, respectively.
 The points $(t_0,y_-)$ and $(t_0,y_+)$ are comparable in the order $\preceq_{se}$,
 and since $T$ is strongly competitive in $\Delta$, $T(t_0,y_-)\preceq_{se}T(t_0,y_+)$
 and  $T(t_0,y_+)-T(t_0,y_-) \in {\rm int}\,\mathcal{Q}_4(\overline{0})$.
 Since both $T(t_0,y_+)$ and $T(t_0,y_+)$ are accumulation points of $\mathcal{C}$,
 we obtain that $\mathcal{C}$  must have comparable points, a contradiction.
 Thus $f(t)$ is a continuous function.

If $\mathcal{C}$ is not bounded, then both of its endpoints are in $\partial \mathcal{R}$,
as the conclusion of the theorem asserts.
If $\mathcal{C}$ is bounded, it has two endpoints,
$\overline{\rm x}$ and ${\rm x}_0$ (say).
To show that either  ${\rm x}_0 \in \partial \mathcal{R}$, or
 ${\rm x}_0$ is a fixed point of $T$,
assume this is not the case.  That is, assume
${\rm x}_0 \in {\rm int}\,\mathcal{R}$ and $T({\rm x}_0) \neq {\rm x}_0$.
We shall show that in this case, the curve $\mathcal{C}$ can be extended,
contradicting the definition of $\mathcal{C}$ as a connected component.

We first show that $T({\rm x})\neq\overline{\rm x}$ for ${\rm x} \in \Delta$.
To see this, consider points ${\rm y}$ and ${\rm z}$ in $\Delta$
so that the points ${\rm y}$, ${\rm x}_0$ and ${\rm z}$ lie on a vertical line $x=c$
with ${\rm y} \preceq_{se} {\rm x}_0 \preceq_{se} {\rm z}$, and such that
both ${\rm y}$ and $\rm z$ distinct from ${\rm x}_0$.
Then $T({\rm y}) \preceq_{se} T({\rm x}_0) \preceq_{se} T({\rm z})$.
Furthermore, since $T$ is strongly competitive,
if $T({\rm x}_0)=\overline{\rm x}$, we have
${\rm x}_0-T({\rm x})\in {\rm int}\,\mathcal{Q}_4(\overline{\rm x})$ and
$T({\rm z})- {\rm x} \in {\rm int}\,\mathcal{Q}_4(\overline{\rm x})$.
Since $\overline{\rm x} \in \partial \mathcal{R}$, we conclude that one of the points
$T({\rm y})$ and $T({\rm z})$ does not belong to $\mathcal{R}$, a contradiction.

Since $T({\rm x}_0)\neq \overline{\rm x}$, $T({\rm x}_0)\neq {\rm x}_0$ and
$\mathcal{C}$ is a (forward) invariant connected set,
we must have $T({\rm x}_0) \in \mathcal{C}\cap \Delta$.
  Let $\mathcal{R}_1$ be the rectangular region determined by
  the endpoints of $\mathcal{C}$,
  and
  let $\varepsilon>0$ be such that $\mathcal{B}(T({\rm x}_0),\varepsilon) \subset
  \mathcal{R}_1$. Note that $\mathcal{C}$ is a separatrix for $\mathcal{R}_1$.
  For ${\rm y} \in \mathbb{R}^2$ and $\eta>0$,
  denote with $\mathcal{B}({\rm y},\eta)$ the open disk in $\mathbb{R}^2$
   with center $\rm y$ and radius $\eta$.
  By continuity of $T$, there exists $\delta>0$ such that
  $T(\mathcal{B}({\rm x}_0,\delta)) \subset \mathcal{B}(T({\rm x}_0),\varepsilon)$.
  Consider the line segment $L_0$ with endpoints $({\rm x}_0^{(1)}, {\rm x}_0^{(2)}\pm\delta/2)$.
  Since $L_0$ is linearly ordered by $\preceq_{se}$, so is $T(L_0)$, and
  the points $T({\rm x}_0^{(1)}, {\rm x}_0^{(1)}\pm\delta/2)$ are on different components
  of $\mathcal{R}_1\setminus \mathcal{C}$.
  Find $\varepsilon_2>0$
  such that $\mathcal{B}(T({\rm x}_0^{(1)}, {\rm x}_0^{(2)}\pm\delta/2),\varepsilon_2)$
  is a subset of a component of
  $\mathcal{R}_1\setminus \mathcal{C}$ and of
  $\mathcal{B}(T({\rm x}_0),\varepsilon)$, and choose $\eta>0$ such that
  $$
  T[\mathcal{B}(({\rm x}_0^{(1)},{\rm x}_0^{(2)}\pm\delta/2) , \eta )] \subset
  \mathcal{B}(T[({\rm x}_0^{(1)},{\rm x}_0^{(2)}\pm\delta/2)],\varepsilon_2)
  $$
  Now for each $t \in (0,\eta)$ consider the line segment  $L_t$ with  endpoints
  ${\rm p}_\pm(t) := ({\rm x}_0^{(1)}+t,{\rm x}_0^{(2)}\pm \delta/2)$.
   Note that $L_t \subset \mathcal{B}({\rm x}_1,\delta)$, and that
   $T({\rm p}_+(t))$, $T({\rm p}_-(t))$  belong to
  different components of $\mathcal{R}_1\setminus \mathcal{C}$.
  For each $t \in (0,\eta)$, the line segment $L_t$  is linearly ordered by $\preceq_{se}$, hence so is $T(L_t)$.
  Thus for each $t\in (0,\eta)$  there exists   ${\rm x}_t \in L_t$ such that $T({\rm x}_t) \in \mathcal{C}$.
  That is, the function $f(x)$ may be extended to a function $\hat{f}(x)$ defined on an interval $\hat{J}$
  that includes $J$ as a proper subset.  The reasoning used to show continuity and monotonicity of $f(x)$ gives
  continuity and monotonicity of $\hat{f}(x)$.  This contradicts the choice of $\mathcal{C}$ as a component,
  and we conclude that either $T({\rm x}_0)={\rm x}_0$ or ${\rm x}_0\in \partial \mathcal{R}$.   See Figure \ref{fig: proof of theorem 1}.

We have proved the theorem for $\overline{\rm x} \in \partial \mathcal{R}$.
If now $\overline{\rm x} \in {\rm int}\, \mathcal{R}$,
the argument in the case  $\overline{\rm x} \in \partial \mathcal{R}$
may be used to demonstrate the existence of a curve $\mathcal{C}$
 with endpoints
${\rm w} \in {\rm int}\,\mathcal{Q}_3(\overline{\rm x})$ and
${\rm z} \in {\rm int}\,\mathcal{Q}_1(\overline{\rm x})$.
By the argument used before, if  both of ${\rm w},{\rm z}$ are
in the interior of $\mathcal{R}$, then  the set $\{ {\rm w} , \rm z\}$
is invariant and therefore consists of fixed points or of minimal period-two points.
If only one of ${\rm w}$, ${\rm z}$ is in ${\rm int}\, \mathcal{R}$, then such point
must be a fixed point of $T$.
\hfill $\Box$
\bigskip

\begin{figure}[!h]
\begin{center}
\setlength{\unitlength}{1 in}
\begin{picture}(4,3)
\put(0,-0.75){\includegraphics[width=4in]{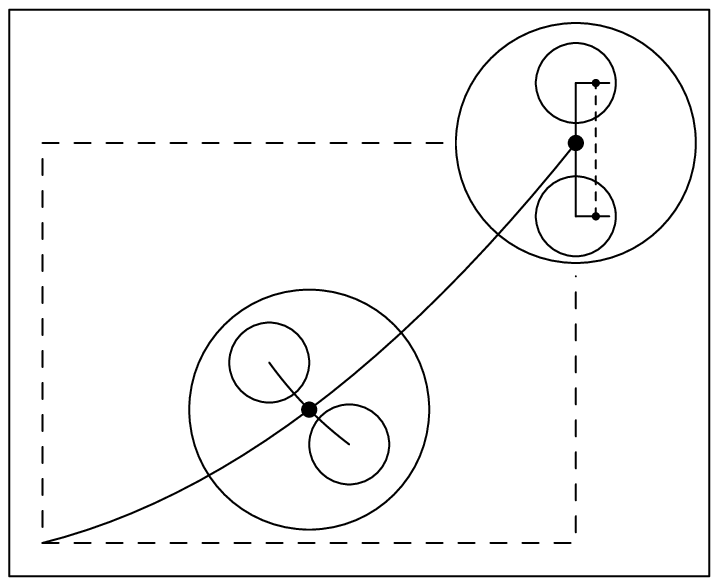}}
\put(0.6,2.7){$\mathcal{R}$}
\put(0.6,2.0){$\mathcal{R}_1$}
\put(0.6,0.5){$\mathcal{C}$}
\put(2.78,2.22){${\rm x}_0$}
\put(3.4,2.2){$L_t$}
\put(3.35,2.2){\vector(-1,0){0.24}}
\put(1.75,1.3){\small $T(L_0)$}
\put(1.51,0.6){\vector(1,2){0.13}}
\put(1.25,0.5){\small $T({\rm x}_0)$}
\put(1.74,1.3){\vector(-1,-1){0.2}}
\put(2.0,1.9){\small $\mathcal{B}({\rm x}_0,\delta)$}
\put(2.3,0.78){\small $\mathcal{B}(T({\rm x}_0),\epsilon)$}
\put(0.55,1.6){\vector(2,-1){0.74}}
\put(2.35,2.6){\vector(1,0){0.45}}
\put(1,2.55){\small $\mathcal{B}(({\rm x}_0^{(1)}, {\rm x}_0^{(2)}+\delta/2),\eta)$}
\put(0.5,1.65){\small $\mathcal{B}(T({\rm x}_0^{(1)}, {\rm x}_0^{(2)}+\delta/2),\varepsilon_2)$}
 \end{picture}
\end{center}
\caption{Sets that appear in the proof of Theorem \ref{th: invariant curve}.}
\label{fig: proof of theorem 1}
\end{figure}
\bigskip

\noindent {\bf Proof of Theorem \ref{th: suff cond for boundary endpoints}}.
i. The conclusion follows from Theorem \ref{th: invariant curve}.
ii.
We claim that $\mathcal{C}\cap \mathcal{ Q}_{1}(\overline{\rm x})$
and $\mathcal{C}\cap \mathcal{ Q}_{3}(\overline{\rm x})$
are invariant.  Note first that both of these sets are connected, hence so are
their images under $T$.
Since $T({\rm x}) = \overline{x}$ for ${\rm x} \in \mathcal{C}$
is not possible by hypothesis, it follows that for $\ell=1,2$,
$\overline{\rm x}$ is necessarily an endpoint of
$T(\mathcal{C}\cap \mathcal{ Q}_{\ell}(\overline{\rm x}))$.
Hence  either
$T(\mathcal{C} \cap \mathcal{ Q}_{\ell}(\overline{\rm x})) \subset \mathcal{ Q}_{1}(\overline{\rm x})$, or,
$T(\mathcal{C} \cap \mathcal{ Q}_{\ell}(\overline{\rm x})) \subset \mathcal{ Q}_{3}(\overline{\rm x})$, $\ell=1,2$.
We now show
$T(\mathcal{C} \cap \mathcal{ Q}_{1}(\overline{\rm x})) \subset \mathcal{ Q}_{1}(\overline{\rm x})$.
Since $T$ is competitive, the largest eigenvalue $\mu$ of
$J_T(\overline{\rm x})$ is positive.
This fact and the hypothesis
$\det J_T(\overline{\rm x}) > 0$ implies that both eigenvalues
$\lambda$, $\mu$, of $J_T(\overline{\rm x})$ are positive.
Let ${\rm v}^\lambda$ be an eigenvector associated with $\lambda$.
Since the product of the entries of ${\rm v}^\lambda$ is positive,
we may assume withtout loss of generality the entries are positive, and in this case
for all $r>0$  small enough,
${\rm x} = \overline{\rm x} + r {\rm v}^\lambda
\in \Delta \cap \mathcal{Q}_1(\overline{x})$.
Now ${\rm x} \in \mathcal{C}$ satisfies
$T({\rm x}) = \overline{\rm x} + \lambda\, r\, {\rm v}^\lambda + o(r)$,
hence for $r>0$ small, $T({\rm x}) \in  \mathcal{C} \cap \mathcal{Q}_1(\overline{\rm x})$.
This proves $ \mathcal{C} \cap \mathcal{Q}_1(\overline{\rm x})\neq \emptyset$, and therefore
$ \mathcal{C} \cap \mathcal{Q}_1(\overline{\rm x}) \subset  \mathcal{Q}_1(\overline{\rm x})$.
That $ \mathcal{C} \cap \mathcal{Q}_3(\overline{\rm x}) \subset  \mathcal{Q}_3(\overline{\rm x})$
 is proved in similar manner.
The statement that $\mathcal{C}$ has no endpoints in the interior of $\mathcal{R}$
follows from the fact
that $\mathcal{C}\cap \mathcal{ Q}_{1}(\overline{\rm x})$ and
$\mathcal{C}\cap \mathcal{ Q}_{3}(\overline{\rm x})$ are invariant.  Indeed,
each one of these sets has a set of endpoints which is invariant, and contains $\overline{\rm x}$.
Since $T({\rm x}) = \overline{\rm x}$ only for ${\rm x}=\overline{{\rm x}}$
by hypothesis, the endpoint that is not equal to $\overline{\rm x}$ must be
a fixed point, a contradiction.
The proof of iii. is similar to the proof of ii. and we skip it.
\hfill $\Box$
\bigskip


\noindent {\bf Proof of Theorem \ref{th: smooth manifold}}.
By Lemma 5.1 and Exercise 5.1 (a) (ii) in pages 234 and  238 of \cite{Hartman},
there exists a neighborhood $\mathcal{D}$ of $\overline{\rm x}$ 
such that $\mathcal{D}\cap\mathcal{C}$ is a class $C^k$ manifold.
For arbitrary ${\rm x}\in \mathcal{C}$, let 
$n\in\mathbb{N}$ be such that $T^n({\rm x}) \in \mathcal{D}\cap \mathcal{C}$. 
By the hypotheses on $T$, the map $T^n$ is of class $C^k$, with $C^k$ inverse
defined on a neighborhood $\mathcal{E}$ of  $T^n({\rm x})$.
 Thus $(T^n)^{-1}(\mathcal{E}\cap \mathcal{C})$ is a $C^k$ manifold.
\hfill $\Box$

\bigskip


\noindent {\bf Proof of Theorem \ref{th: basins}}.
For convenience, in this proof we assume $\overline{\rm x} = \overline{0}$.
It is clear that the sets
$\mathcal{W}_- $
and
$\mathcal{W}_+ $
are connected, disjoint,
and satisfy $\mathcal{C} \cup \mathcal{W}_- \cup \mathcal{W}_+ \subset \mathcal{R}$.
To prove the reverse inclusion, let $\rm w$ and $\rm z$ be the two endpoints of $\mathcal{C}$,
where ${\rm w } \preceq_{ne} {\rm z}$.  If ${\rm x} \in \mathcal{R}\setminus ( \mathcal{C} \cup \mathcal{W}_- \cup \mathcal{W}_+)$,
then either ${\rm w} \in \mathbb{R}^2$ and ${\rm x} \preceq_{ne} {\rm w}$, or
${\rm z} \in \mathbb{R}^2$ and ${\rm z} \preceq_{ne} {\rm x}$.
Suppose ${\rm z} \in \mathbb{R}^2$ and ${\rm z} \preceq_{ne} {\rm x}$.
Since ${\rm z} \in \partial \mathcal{R}$, then ${\rm x} \in \partial \mathcal{R}$
and necessarily $\rm x$ is comparable to $\rm z$.
Since $z\in \mathcal{C}$ contradicts the assumption on $\rm x$,
necessarily ${\rm z} \not \in \mathcal{C}$.  But then $z \not \in \mathcal{R}$,
which implies that ${\rm x} \not \in \mathcal{R}$.
This contradiction proves the inclusion.

 We now prove (i).
 If ${\rm x} \in \mathcal{W}_-$, let ${\rm y} \in  \mathcal{C}$ be such that ${\rm x} \preceq_{se} {\rm y}$.
 Then $T^n({\rm x}) \preceq_{se} T^n({\rm y})$ for $n >0$.
 Hence  $T^n({\rm x})^{(1)} \leq T^n({\rm y})^{(1)} $,
 and
 $T^n({\rm x})^{(2)} \geq T^n({\rm y})^{(2)} $,
where for ${\rm v} \in \mathbb{R}^2$ we write ${\rm v}=({\rm v}^{(1)},{\rm v}^{(2)})$.
 Since $T^n({\rm y}) \rightarrow \overline{0}$ as $n\rightarrow \infty$,
 it follows that
 $\lim \sup T^n({\rm x})^{(1)} \leq \lim \sup T^n({\rm y})^{(1)} = 0$
 and
 $\lim \inf T^n({\rm x})^{(2)} \geq \lim \inf T^n({\rm y})^{(2)} = 0$,
 that is,
 $\mbox{\rm dist}(T^n({\rm x}),\mathcal{Q}_2(\overline{0}))\rightarrow 0$.
 This proves (i).  The proof of (ii) is similar.

To prove (iii), assume first $\lambda > 0$ and
$\mbox{\rm int}\, \mathcal{Q}_1(\overline{0}) \cap \mathcal{R} \not = \emptyset$.
We proceed by contradiction and assume
there exists $(x_0,y_0) \in \mathcal{W}_-$
such that
\begin{equation}
\label{eq: assumption for contrad}
T^n(x_0,y_0)   \in {\rm int}\,\mathcal{Q}_1(\overline{0})
\quad \mbox{for } n \in \mathbb{N}\, .
\end{equation}

\begin{claim}
$T^n(x_0,y_0) \rightarrow (0,0)$.
\end{claim}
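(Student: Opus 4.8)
The plan is to prove the claim directly, by identifying the $\omega$-limit set $\omega:=\omega(x_0,y_0)$ and showing that $\omega=\{\overline{0}\}$. The first ingredient is the behaviour of the first coordinate. Since $(x_0,y_0)\in\mathcal{W}_-$, pick ${\rm y}_0\in\mathcal{C}$ with $(x_0,y_0)\preceq_{se}{\rm y}_0$; because $T$ is competitive we get $T^n(x_0,y_0)\preceq_{se}T^n({\rm y}_0)$ for all $n$, and $T^n({\rm y}_0)\to\overline{0}$ since $\mathcal{C}$ is contained in the basin of attraction of $\overline{0}$ by Theorem \ref{th: invariant curve}. Reading off coordinates, and using that assumption (\ref{eq: assumption for contrad}) forces $T^n(x_0,y_0)^{(1)}>0$, one obtains $T^n(x_0,y_0)^{(1)}\to 0$ together with $\liminf_n T^n(x_0,y_0)^{(2)}\ge 0$.

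The step I expect to be the main obstacle is showing that the forward orbit of $(x_0,y_0)$ has compact closure in $\mathcal{R}$; the difficulty is that $\mathcal{R}$, and hence the invariant set $\mathcal{W}_-$, may be unbounded, so a priori the orbit could escape upward along the second coordinate. By the previous paragraph only an upper bound for the second coordinate is needed, and I would get it again from the order structure: as $\overline{0}\in{\rm int}\,\mathcal{R}$, the vertical segment from $\overline{0}$ to $(0,y_0)$ lies in $\mathcal{R}$ and $(0,y_0)\preceq_{se}(x_0,y_0)$, so $T^n(0,y_0)\preceq_{se}T^n(x_0,y_0)$ and therefore $T^n(x_0,y_0)^{(2)}\le T^n(0,y_0)^{(2)}$; one then controls $T^n(0,y_0)^{(2)}$ by a $\preceq_{se}$-upper bound for $(0,y_0)$ on the vertical axis through $\overline{0}$, or by appealing to the reduction already carried out in the earlier portion of the proof of (iii) that confines the orbit to a compact part of $\mathcal{R}$.

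Granting compact closure, $\omega$ is a nonempty, compact, connected set with $T(\omega)=\omega$, and by the first paragraph it is contained in the vertical axis through $\overline{0}$, that is, in $\partial(\mathcal{Q}_1(\overline{0})\cup\mathcal{Q}_3(\overline{0}))$; being connected, it is a segment $\{0\}\times[\alpha,\beta]$ with $\beta\ge\alpha\ge 0$. This segment is linearly ordered by $\preceq_{se}$, and since $T$ is competitive and $T(\omega)=\omega$, the restriction $T|_\omega$ is a continuous, $\preceq_{se}$-order-preserving surjection of the segment onto itself; any such map fixes its two $\preceq_{se}$-extreme points, so $(0,\alpha)$ and $(0,\beta)$ are fixed points of $T$ lying on $\partial(\mathcal{Q}_1(\overline{0})\cup\mathcal{Q}_3(\overline{0}))$. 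By part (B), the only periodic point of $T$ on that boundary set is $\overline{0}$; hence $\alpha=\beta=0$, $\omega=\{\overline{0}\}$, and $T^n(x_0,y_0)\to\overline{0}$, which is the claim.
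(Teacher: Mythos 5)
Your first paragraph is fine and recovers what the paper gets from part (i): $T^n(x_0,y_0)^{(1)}\to 0$ and $\liminf_n T^n(x_0,y_0)^{(2)}\ge 0$. The gap is exactly where you predicted it, in the compactness step, and your proposed fix does not close it. A $\preceq_{se}$-comparison can only bound $T^n(x_0,y_0)^{(2)}$ \emph{above} by $T^n({\rm p})^{(2)}$ for a point ${\rm p}$ with ${\rm p}\preceq_{se}(x_0,y_0)$, and you then need the second coordinate of the orbit of ${\rm p}$ to be bounded above --- which is the same problem one step removed. The point $(0,y_0)$ you choose lies on the vertical axis through $\overline{0}$, but that axis is not invariant and nothing in the hypotheses controls its orbit; moreover a ``$\preceq_{se}$-upper bound for $(0,y_0)$ on the vertical axis'' is a point $(0,v)$ with $v\le y_0$, whose iterates give a \emph{lower} bound on the second coordinates, not an upper one. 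Finally, there is no ``reduction already carried out in the earlier portion of the proof of (iii) that confines the orbit to a compact part of $\mathcal{R}$'': the proof of (iii) opens directly with the contradiction hypothesis (\ref{eq: assumption for contrad}) and then states the Claim. So the boundedness of $\{T^n(x_0,y_0)^{(2)}\}$ is left unproven and your omega-limit-set argument cannot start.

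The paper avoids compactness altogether and proves $T^n(x_0,y_0)^{(2)}\to 0$ directly. Fix a small $\epsilon>0$. Strong competitiveness of $T$ near $\overline{0}$ gives $T(0,\epsilon)\in{\rm int}\,\mathcal{Q}_2(\overline{0})$, and by continuity there is $\delta(\epsilon)>0$ with $T(\delta(\epsilon),\epsilon)\in{\rm int}\,\mathcal{Q}_2(\overline{0})$. Every point of the \emph{unbounded} strip $S(\delta(\epsilon),\epsilon)=\{(w,z):0\le w<\delta(\epsilon),\ \epsilon<z\}$ satisfies $(w,z)\preceq_{se}(\delta(\epsilon),\epsilon)$, so competitiveness maps the whole strip into ${\rm int}\,\mathcal{Q}_2(\overline{0})$; since by (\ref{eq: assumption for contrad}) the orbit never leaves ${\rm int}\,\mathcal{Q}_1(\overline{0})$, it can never enter $S(\delta(\epsilon),\epsilon)$. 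As $T^n(x_0,y_0)^{(1)}\to 0$, eventually $T^n(x_0,y_0)^{(1)}<\delta(\epsilon)$, which forces $T^n(x_0,y_0)^{(2)}\le\epsilon$. Letting $\epsilon\downarrow 0$ proves the Claim; note that boundedness of the orbit is a \emph{consequence} of this strip argument rather than an input. A secondary concern: your last paragraph invokes the assertion of part (B) that $\overline{\rm x}$ is the only periodic point on $\partial(\mathcal{Q}_1(\overline{\rm x})\cup\mathcal{Q}_3(\overline{\rm x}))$, which is itself part of the statement being proved; if you keep the omega-limit route you must establish that fact independently before using it.
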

To prove the claim, let $\eta>0$ be such that $T$ is strongly monotonic
on $\mathcal{B}(\overline{0},\delta)$, and
let $\epsilon$ be an arbitrary positive number in $(0,\eta)$.
Since $(0,\epsilon) \preceq_{se} (0,0)$,
strong monotonicity of $T$ implies
$T(0,\epsilon) \in \mbox{\rm int}\,\mathcal{Q}_2((0,0))$.
By continuity of $T$, there exists $\delta(\epsilon) > 0$ such that
$(s,t) \in\, {\rm clos}\,\mathcal{B}((0,\epsilon_0),\delta(\epsilon))$ implies
$T(s,t) \in \mbox{\rm int}\,\mathcal{Q}_2((0,0))$.
In particular, $T(\delta(\epsilon),\epsilon) \in
 \mbox{\rm int}\,\mathcal{Q}_2((0,0))$.
If $(s,t) \in S(\delta(\epsilon),\epsilon) := \{ (w,z): 0 \leq w < \delta(\epsilon),
\epsilon < z \}$, then $(s,t) \preceq_{se} (\delta(\epsilon),\epsilon)$ and
$T(s,t) \preceq_{se} T(\delta(\epsilon),\epsilon)$ since $T$ is competitive.
Thus $T(S(\delta(\epsilon),\epsilon) ) \subset \mbox{\rm int}\,\mathcal{Q}_2((0,0))$.
Since $(x_n,y_n) :=T^n(x_0,y_0)$ satisfies $y_n \rightarrow 0$ by part (i) of the Theorem,
and since $(x_n,y_n) \not \in S(\delta(\epsilon),\epsilon)$ for all $\epsilon \in (0,\eta)$,
we conclude $y_n \rightarrow 0$, thus completing the proof of the claim.\hfill $\Box$
\medskip

By Lemma 5.1 and Exercise 5.1 (a) (ii) in pages 234 and  238 of \cite{Hartman},
there exists a $C^2$ change of coordinates $\Theta$
such that  the map $\hat{T}:= \Theta T \Theta^{-1}$
is defined in a neighborhood $\mathcal{B}(\overline{0},\hat{\delta})$ of $\overline{0}$
where it is of class $C^2$, $\hat{T}$ has eigenvectors $\mu$ and $\lambda$
with associated eigenvectors $(0,1)$ and $(1,0)$, and such that
the invariant curve $\mathcal{C}$ is mapped to the $x$-axis.
The points $(\hat{x}_n,\hat{y}_n):=\Theta(x_n,y_n) = \hat{T}^n(\hat{x}_0,\hat{y}_0)$,
satisfy
\begin{equation}
\label{eq: hat xn}
(\hat{x}_n,\hat{y}_n) \in \mathcal{Q}_1(\overline{0}),\ n=0,1,2,\ldots ,
\quad \mbox{\rm and} \quad
(\hat{x}_n,\hat{y}_n) \rightarrow \overline{0}\, .
\end{equation}

For $m>0$, let $\Omega(m)$ be the open wedge in the first quadrant
limited by the x-semiaxis and the line $y=m\,x$, that is,
$$ \Omega(m) = \{ (x,y) \in (0,\infty)^2 : 0 < y < m\, x \}\, .
$$
If ${\rm v} \in Q_1(\overline{0})$, by $\theta({\rm v})$
we denote the measure of the polar angle of ${\rm v}$
with respect to the positive horizontal semiaxis.
Thus $\tan \theta({\rm v}) = {\rm v}^{(2)}/{\rm v}^{(1)}$ whenever ${\rm v}^{(1)}\neq 0$.
\begin{claim}
For every $m\in (0,\infty)$ and every $q \in (1,\frac{\mu}{\lambda})$
there exists $\delta>0$ such that
\begin{equation}
\label{ineq: lemma q}
\hat{T}(x,y) \in \mbox{\rm int}\, Q_1(\overline{0})
\quad \mbox{\rm and } \quad
\tan \theta(\hat{T}(x,y)) > q\, \tan\,\theta(x,y)
\quad \mbox{\rm for} \quad
(x,y) \in \Omega(m) \cap \mathcal{B}(\overline{0},\delta)\, .
\end{equation}
\end{claim}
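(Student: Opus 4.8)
The plan is to work entirely in the normal-form coordinates supplied by $\Theta$. I would write
$\hat T(x,y)^{(1)} = \lambda x + P(x,y)$ and $\hat T(x,y)^{(2)} = \mu y + Q(x,y)$,
where $P,Q$ are the $C^2$ remainders left after subtracting the linear part $\mathrm{diag}(\lambda,\mu)$ at $\overline{0}$; since $P$ and $Q$, together with their first-order derivatives, vanish at $\overline{0}$, there is $C>0$ with $|P(x,y)|+|Q(x,y)| \le C(x^2+y^2)$ on a ball $\mathcal{B}(\overline{0},r)$. The first structural fact I would exploit is that $\mathcal{C}$ is invariant and is carried by $\Theta$ onto (a neighbourhood of $\overline{0}$ in) the $x$-axis, so $\hat T(x,0)^{(2)}=0$, i.e.\ $Q(x,0)=0$ for small $|x|$. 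By Taylor's theorem with integral remainder this factors as $Q(x,y)=y\,\tilde Q(x,y)$ with $\tilde Q$ continuous and $\tilde Q(0,0)=0$.

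Next I would restrict to the wedge. On $\Omega(m)$ we have $0<y<mx$, hence $x^2+y^2\le(1+m^2)x^2$; thus $\hat T(x,y)^{(1)} = x\bigl(\lambda + P(x,y)/x\bigr)$ with $|P(x,y)/x| \le C(1+m^2)x$, while $\hat T(x,y)^{(2)} = y\bigl(\mu + \tilde Q(x,y)\bigr)$. Because $P(x,y)/x\to 0$ and $\tilde Q(x,y)\to 0$ as $(x,y)\to\overline{0}$ uniformly over $\Omega(m)$, there is $\delta_0>0$ such that both $\lambda + P(x,y)/x$ and $\mu + \tilde Q(x,y)$ are positive on $\Omega(m)\cap\mathcal{B}(\overline{0},\delta_0)$; since $x,y>0$ there, this yields $\hat T(x,y)\in{\rm int}\,\mathcal{Q}_1(\overline{0})$, the first assertion of the claim.

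For the angle inequality I would then compute, on $\Omega(m)\cap\mathcal{B}(\overline{0},\delta_0)$,
\[
\frac{\tan\theta(\hat T(x,y))}{\tan\theta(x,y)}
= \frac{\hat T(x,y)^{(2)}\big/\hat T(x,y)^{(1)}}{y/x}
= \frac{\mu + \tilde Q(x,y)}{\lambda + P(x,y)/x}\, ,
\]
using $\tan\theta(x,y)=y/x$. The right-hand side tends to $\mu/\lambda$ uniformly on $\Omega(m)\cap\mathcal{B}(\overline{0},\delta)$ as $\delta\downarrow 0$ (the bounds on $P(x,y)/x$ and on $\tilde Q(x,y)$ depend only on $\delta$). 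Hence for any prescribed $q\in(1,\mu/\lambda)$ one shrinks $\delta\in(0,\delta_0]$ until this ratio exceeds $q$ throughout $\Omega(m)\cap\mathcal{B}(\overline{0},\delta)$; multiplying through by $\tan\theta(x,y)>0$ gives $\tan\theta(\hat T(x,y))>q\,\tan\theta(x,y)$, as required.

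The step I expect to be the main obstacle is keeping $\hat T(x,y)^{(2)}$ positive — and, more importantly, comparable to $\mu y$ rather than to the $O(x^2)$ remainder — near the lower edge $y=0^{+}$ of the wedge, where $y$ is much smaller than $x$; the crude estimate $|Q(x,y)|\le C(x^2+y^2)$ does not achieve this. Invariance of the $x$-axis is exactly what fixes it, upgrading the bound to $|Q(x,y)|=|y|\,|\tilde Q(x,y)|=O\bigl(|y|\,\|(x,y)\|\bigr)$. Once that factorization is in place, the remaining steps are routine uniform estimates on the remainder terms over the cone $\Omega(m)$.
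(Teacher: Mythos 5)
Your argument is correct and is essentially the paper's own proof in slightly different bookkeeping: both factor the second-coordinate remainder as $y$ times a quantity vanishing at the origin (using invariance of the $x$-axis, i.e.\ $\hat{T}(x,0)^{(2)}=0$), control the first-coordinate remainder divided by $x$ via $0<y<mx$ on the wedge, and conclude that the ratio of tangents equals $\frac{\mu+o(1)}{\lambda+o(1)}>q$ near the origin, with positivity of both coordinates falling out of the same estimates. The only difference is that the paper derives the two factorizations from the $C^1$-type estimate $\|\hat{T}(x,y)-\hat{T}(x,0)-\mu\,y\,(0,1)\|<\varepsilon\,|y|$ (Lemma 10.11 of \cite{ASY}) together with a Taylor expansion of $\hat{T}(x,0)^{(1)}$, rather than from Taylor's theorem with integral remainder applied to $Q$, which is an immaterial distinction.
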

%
\begin{proof}
Let $A$ be the Jacobian operator of $\hat{T}$ at $\overline{0}$.
Thus the matrix representation of $A$ in the standard basis of $\mathbb{R}^2$ is
a diagonal matrix with $\lambda$ and $\mu$ on the diagonal.
Since $\hat{T}$ is $C^2$ on a neighborhood of the origin,
Lemma 10.11 from \cite{ASY} guarantees
 that for every $\epsilon>0$ there exists $\delta>0$ such that
$$
\|\, \hat{T}(x,y)-\hat{T}(s,t) - A\,(x-s,y-t)\,  \| < \epsilon \|\, (x-s,y-t)\,  \| \quad \mbox{\rm for } (x,y),(s,t) \in \mathcal{B}(0,\delta)\, .
$$
Then for every $\epsilon>0$ there exists $\delta_1>0$ such that
\begin{equation}
\label{ineq: c1}
\|\hat{T}(x,y) - \hat{T}(x,0)-\mu\,y\,(0,1) \| < \varepsilon \, |\,y\,|
\quad \mbox{\rm for} \quad
(x,y)  \in \mathcal{B}(\overline{0},\delta_1)\, .
\end{equation}
By continuity of the function $g(s,t) := \frac{\mu+s}{\lambda+t}$ at $(0,0)$
and since $g(0,0) = \frac{\mu}{\lambda} > q > 1$,
there exists $\eta  \in (0,\lambda)$ such that
\begin{equation}
\label{eq: eta}
r(s,t) > q \quad \mbox{\rm for} \quad |s|<\eta, \ |t| < \eta\, .
\end{equation}
Set  $\varepsilon := \min \{ \frac{\eta}{2 m} , \eta ,\mu \}$,
and let $\delta_1>0$ be chosen so that the inequality
in (\ref{ineq: c1}) holds for $(x,y)  \in \mathcal{B}(\overline{0},\delta_1)$.
Note that from (\ref{ineq: c1}) we have
\begin{equation}
\label{ineq: c11}
|\hat{T}(x,y)^{(2)}-\mu\,y | < \varepsilon \,y \quad \mbox{\rm and} \quad
|\hat{T}(x,y)^{(1)}-\hat{T}(x,0)^{(1)}| < \varepsilon \, | \, y \, |
\quad \mbox{\rm for} \quad
(x,y)  \in \mathcal{B}(\overline{0},\delta_1)\, .
\end{equation}
In this case there exist functions $\phi(x,y)$ and $\psi(x,y)$ of $(x,y) \in \mathcal{B}(0,\delta_1)$ such that
\begin{equation}
\label{ineq: phi psi varepsilon}
\left\{
\begin{array}{l}
|\phi(x,y)| < \varepsilon \, , \quad   |\psi(x,y) | < \varepsilon \, , \\ \\
\hat{T}(x,y)^{(2)} = \mu\, y + \phi(x,y)\, y \\ \\
\hat{T}(x,y)^{(1)} = \hat{T}(x,0)^{(1)}+ \psi(x,y)\, y
\end{array}
\right.
\qquad \mbox{\rm for } \ (x,y) \in \mathcal{B}(\overline{0},\delta_1)\, .
\end{equation}
From the Taylor expansion of $\hat{T}(x,y)$ about $(0,0)$ one can see that there exist  $c>0$ and $\delta_2>0$
such that
\begin{equation}
\label{ineq: c2}
\| \hat{T}(x,0)-\lambda\,x\,(0,1) \| < c\, x^2
\quad \mbox{\rm for} \quad |x| < \delta_2\, .
\end{equation}
Since $(\lambda\,x\,(0,1))^{(1)} = \lambda\,x$, we have from   (\ref{ineq: c2})   that
\begin{equation}
\label{ineq: c22}
|\, \hat{T}(x,0)^{(1)}-\lambda\,x  \, | < c\, x^2
\quad \mbox{\rm for} \quad |x| < \delta_2\, .
\end{equation}
In this case there exists a function $\xi(x)$ of $x \in (-\delta_2,\delta_2)$ such that
$|\xi(x) | < c$ for $|x|<\delta_2$, and
\begin{equation}
\hat{T}(x,0)^{(1)} = \lambda\,x+\xi(x) \, x^2 \quad \mbox{for } \quad |x| < \delta_2\, .
\end{equation}
Set $\delta_3:=\min(\delta_1,\delta_2)$. Then,
\begin{equation}
\hat{T}(x,y) = (\lambda\,x+\xi(x)\, x^2+\psi(x,y)\,y, \mu\,y+\phi(x,y)\,y)\, ,
\quad (x,y) \in \mathcal{B}(\overline{0},\delta_3)\, ,
\end{equation}
and consequently,
\begin{equation}
\label{eq: theta}
\tan \theta(\hat{T}(x,y)) = \frac{\mu\, y +\phi(x,y)\, y}{\lambda\,x+\xi(x) \,x^2+\psi(x,y)\,y}
=\frac{y}{x} \left(
\frac{\mu+\phi(x,y)}{\lambda + \xi(x)\,x+\psi(x,y)\frac{y}{x}}
\right)\, , \quad (x,y) \in \mathcal{B}(\overline{0},\delta_3)\, .
\end{equation}
Set  $\delta := \min\{\frac{\eta}{2 c},\delta_3\}$. Then
\begin{equation}
\label{eq:delta3}
\left| \xi(x)\, x \right| < c \cdot \frac{\eta}{2 c} = \frac{\eta}{2} \quad \mbox{\rm for } \quad |x|<\delta\, .
\end{equation}
By relation (\ref{ineq: phi psi varepsilon}) we have
\begin{equation}
\label{eq: psi}
\left| \psi(x,y) \frac{y}{x} \right| \leq \frac{\eta}{2 m} \cdot m = \frac{\eta}{2}
\quad \mbox{\rm for} \quad (x,y) \in \mathcal{B}(\overline{0},\delta)\cap \Omega(m)\, ,
\end{equation}
and
\begin{equation}
\label{eq: phi}
\left| \phi(x,y) \right| < \eta \quad \mbox{\rm for } \quad (x,y) \in \mathcal{B}(\overline{0},\delta)\, .
\end{equation}
By setting $s=\phi(x,y)$, $t=\xi(x) \,x+\psi(x,y)\,\frac{y}{x}$ for $(x,y)\in \mathcal{B}(\overline{0},\delta)$,
relations (\ref{eq: eta}) and (\ref{eq: theta}) through (\ref{eq: phi}) yield
\begin{equation}
\label{ineq: q}
\tan \theta (\hat{T}(x,y))
= \frac{y}{x}\, g(s,t) = \tan(\theta(x,y)) \, g(s,t)
> q \, \tan (\theta(x,y)) \quad \mbox{for} \quad   (x,y) \in \mathcal{B}(\overline{0},\delta)\cap \Omega(m).
\end{equation}
We now verify $\hat{T}(x,y) \in Q_1(\overline{0})$.
Since $\varepsilon < \mu$ by our choice of $\varepsilon$,
then from (\ref{ineq: c11}) we have $\hat{T}(x,y)^{(2)} > 0$.
Also, since $| \xi(x) \, x + \psi(x,y)\,\frac{y}{x} | < \eta < \lambda$, we have
\begin{equation}
\label{ineq: lambda eta}
\hat{T}(x,y)^{(1)} = x \,( \, \lambda + \xi(x) \, x + \psi(x,y)\,\frac{y}{x} \, )
> x \,( \, \lambda - \eta \, )
> 0 \quad \mbox{for} \quad
(x,y) \in \mathcal{B}(\overline{0},\delta)\cap \Omega(m).
\end{equation}
Relations (\ref{ineq: q}) and (\ref{ineq: lambda eta}) give (\ref{ineq: lemma q}),
thus completing the proof of the claim.
\hfill $\Box$
\bigskip

To complete the proof of the theorem, note that given $m>0$ arbitrary
and $\epsilon>0$
there exists $n_0$ such that
$\hat{T}^{n_0}(\hat{x}_0,\hat{y}_0)\in \mathcal{B}(0,\epsilon)\cap \mathcal{Q}_1(\overline{0})$
and $\hat{T}^{n_0}(\hat{x}_0,\hat{y}_0)\not \in \Omega(m)$.
Indeed, if this were not the case, then by Claim 3 and since
$\hat{T}^{n}(\hat{x}_0,\hat{y}_0)\rightarrow \overline{0}$,
one may choose $q>1$
such  (\ref{ineq: lemma q}) holds for
$(x,y) = \hat{T}^{n}(\hat{x}_0,\hat{y}_0)$ for all $n$ large enough, say $n\geq k$.
But then $m \geq \tan \theta(\hat{T}^{k+\ell}(\hat{x}_0,\hat{y}_0))  > q^\ell \,
\tan \theta (\hat{T}^k(\hat{x}_0,\hat{y}_0))$ for $\ell=1,2,\ldots$, which is impossible.
Thus we have shown that the sequence $\{\hat{T}^{n}(\hat{x}_0,\hat{y}_0)\}$
converges to $\overline{0}$ in such a way that it has a subsequence of points
outside the set $\Omega(m)$, for all $m>0$.
Since $\Theta$ maps the vertical positive semiaxis to a curve that emanates from the
origin with angle $\alpha \in (0,\frac{\pi}{2})$, it follows that there exists $n$
such that $T^n(x_0,y_0) \in \mathcal{Q}_2(\overline{0})$.
\end{proof}
\bigskip

\noindent {\bf Proof of Theorem \ref{th: saddle point}}.
The map $T$ is  a diffeomorphism on  a neighborhood $\mathcal{U}$
of $\overline{\rm x}$ onto its image.
The Unstable Manifold Theorem (page 282 in \cite{Katok})
guarantees the existence of the local unstable set
$\mathcal{W}^u_{\rm loc}(\overline{\rm x})$,
which is an invariant curve
that is tangential to ${\rm v}^\mu$ at $\overline{\rm x}$
and such that
\begin{equation}
\label{eq: expands}
\quad {\rm x} \in
\mathcal{W}^u_{\rm loc}(\overline{\rm x})
\quad \implies \quad
T^{-n}({\rm x}) \in
\mathcal{W}^u_{\rm loc}(\overline{\rm x})
\quad \mbox{\rm for }\  n \in \mathbb{N}
\quad \mbox{\rm and}\quad
T^{-n}({\rm x}) \rightarrow \overline{\rm x}
\end{equation}
Since $\mu>1$ and the eigenspace $E^\mu$ is not a coordinate axis,
the entries of eigenvectors ${\rm v}^\mu$ are nonzero.
Further, since $T$ is competitive, the entries of ${\rm v}^\mu$ have
different sign.  Thus points in $\mathcal{W}^u_{\rm loc}(\overline{\rm x})$
that are close enough to $\overline{\rm x}$ are comparable.
It follows from this and from  (\ref{eq: expands})
that $\mathcal{W}^u_{\rm loc}(\overline{\rm x}) $ is
linearly ordered by $\preceq_{se}$.
We claim that points ${\rm x} \in \mathcal{Q}_2(\overline{\rm x})
\cap \mathcal{W}^u_{\rm loc}(\overline{\rm x})$ are subsolutions.
Indeed, the set $ \mathcal{Q}_2(\overline{\rm x})
\cap \mathcal{W}^u_{\rm loc}(\overline{\rm x})$
is invariant.
If ${\rm x} \in \mathcal{Q}_2(\overline{\rm x})
\cap \mathcal{W}^u_{\rm loc}(\overline{\rm x})$ had
${\rm x} \preceq_{se} T({\rm x}) \preceq \overline{\rm x}$, then
$T^{-n}({\rm x}) \preceq_{se}  {\rm x} \preceq \overline{\rm x} $
for $n\in \mathbb{N}$, which contradicts (\ref{eq: expands}).
Similarly, points ${\rm x} \in \mathcal{Q}_4(\overline{\rm x})
\cap \mathcal{W}^u_{\rm loc}(\overline{\rm x})$ are supersolutions.
We have, $\mathcal{W}^u(\overline{\rm x}) = \bigcup_{n=0}^\infty
T^n (  \mathcal{W}^u_{\rm loc}(\overline{\rm x}) )$,
hence $\mathcal{W}^u(\overline{\rm x})$
is a nested union of connected and linearly ordered sets,
hence itself is connected and linearly ordered.
Thus $\mathcal{W}^u(\overline{\rm x})$
is the graph of a decreasing function of the first coordinate.
Let ${\rm y}$ and ${\rm z}$ be the endpoints of
$\mathcal{W}^u(\overline{\rm x})$, with
${\rm y} \preceq {\rm z}$.
Suppose ${\rm y} \in {\rm int}\, \mathcal{R}$.
Since ${\rm y}$ is an accumulation point of subsolutions
in $\mathcal{W}^u(\overline{\rm x})$,
${\rm y}$ is also a subsolution
by continuity of $T$,  and it follows that
${\rm y}$ is   a fixed point.
Similar reasoning applies to ${\rm z}$.

To see that $\mathcal{W}^s(\overline{\rm x}) = \mathcal{C}$, note that
by part (B) of Theorem \ref{th: basins},
iterates of points ${\rm x} \in
\mathcal{R}\setminus\mathcal{C}$
eventually enter
${\rm int} (\mathcal{Q}_2(\overline{\rm x})\cup \mathcal{Q}_4(\overline{\rm x}))\cap \mathcal{R}$.
If for some $n$, $T^n({\rm x}) \in {\rm int}\, \mathcal{Q}_2(\overline{\rm x})$ (say), then
there exists ${\rm y} \in \mathcal{W}^u(\overline{\rm x})$
such that $T^n({\rm x}) \preceq_{se} {\rm y}$, and consequently,
$T^{n+k}({\rm x}) \preceq_{se}T^k( {\rm y})$ for $k \in \mathbb{N}$.
Since ${\rm y}$ is a subsolution,  it follows that $T^\ell ({\rm x}) \not \rightarrow \overline{\rm x}$.
Similarly, $T^n({\rm x}) \in {\rm int}\, \mathcal{Q}_4(\overline{\rm x})$ implies
$T^\ell ({\rm x}) \not \rightarrow \overline{\rm x}$.
Thus $T^\ell({\rm x}) \rightarrow \overline{\rm x}$ if and only if ${\rm x} \in \mathcal{C}$.
\hfill $\Box$

\bigskip

\noindent {\bf Proof of Theorem \ref{th: 2 and 4 all cases}}\ \
Since $\overline{{\rm x}}$ is a fixed point of $T$ and ${\rm v}$ is an eigenvector with associated
eigenvalue $\mu$, we have
\begin{equation}
\label{eq: taylor 3.1}
T(\overline{{\rm x}}+t\,{\rm v}) = \overline{{\rm x}} + t\,\mu\, {\rm v} + o(t).
\end{equation}
Since ${\rm v}^{(1)} {\rm v}^{(2)} <0$, we may assume without loss of generality
that  ${\rm v} \preceq_{se} \overline{0}$.
To prove (i) assume
$\mu >1$, so in particular $(\mu-1) {\rm v} \preceq_{se} \overline{0}$.
If ${\rm int}\,Q_2(\overline{{\rm x}})\cap \mathcal{R}  = \emptyset$, set $t_0=0$, and if
${\rm int}\,Q_2(\overline{{\rm x}})\cap \mathcal{R}  \neq \emptyset$, choose $t_0 >0$ such that
$\llbracket \overline{{\rm x}}+t_0\,{\rm v},\overline{{\rm x}} \rrbracket$ is a subset of $ \mathcal{R}$ and has
no fixed points other than $\overline{{\rm x}}$, and
 \begin{equation}
\label{eq: taylor 3.1 2}
\displaystyle
\frac{1}{t}\left(\,T(\overline{{\rm x}}+t\,{\rm v}) - ( \overline{{\rm x}} + t\,{\rm v}) \,\right) =
(\mu-1)\, {\rm v} + \left(\frac{o_1(t)}{t},\frac{o_2(t)}{t}\right)  \preceq_{se} \overline{0}\, , \quad t \in (0,t_0]\, .
\end{equation}
Hence $T^(\overline{{\rm x}}+t \,{\rm v}) \preceq_{se} \overline{{\rm x}}+t\,{\rm v}$ for $t \in (0,t_0]$.
Similarly, if $ \mathcal{R} \cap {\rm int}\, Q_4(\overline{{\rm x}}) = \emptyset$ set $t_1=0$, and if
$ \mathcal{R} \cap {\rm int}\, Q_4(\overline{{\rm x}}) \neq \emptyset$ choose $t_1 < 0$
such that
$\overline{{\rm x}}+t\,{\rm v} \preceq_{se} T^(\overline{{\rm x}}+t \,{\rm v}) $ for $t \in [t_1,0)$.
Note that $t_1$ and $t_2$ are not both $0$.
Define $I := \llbracket \overline{{\rm x}}+t_0\,{\rm v},\overline{{\rm x}}+t_1\,{\rm v}\rrbracket$.
The set $I$ is a relative neighborhood of $\overline{{\rm x}}$ in $ \mathcal{R}$,
and  $I \cap {\rm int}\,(Q_2(\overline{{\rm x}})\cup Q_4(\overline{{\rm x}}))$ has no fixed points.
 Then for every relative neighborhood $U \subset I$ of $\overline{{\rm x}}$,
$U\cap \,{\rm int}\,Q_2(\overline{{\rm x}})$ contains a subsolution and
 $U\cap \,{\rm int}\,Q_4(\overline{{\rm x}})$ contains a supersolution.
For ${\rm x} \in I\cap {\rm int}\,(Q_2(\overline{{\rm x}})\cup Q_4(\overline{{\rm x}}))$,
there exists a  ${\rm y} \in I$ which is not a fixed point of $T$ such that
such that   ${\rm y}$ is a subsolution and ${\rm x} \preceq_{se} {\rm y} \preceq_{se} \overline{{\rm x}}$ or
${\rm y}$ is a supersolution and $\overline{{\rm x}} \preceq_{se} {\rm y} \preceq_{se} {\rm x}$.
Then either $T^n({\rm x}) \preceq_{se} T^n({\rm y})$ or $T^n({\rm y}) \preceq_{se}{\rm x} \preceq_{se} {\rm y} \preceq \overline{{\rm x}}$.
Since there exists $n_0$ such that $T^{n}({\rm y}) \not \in I$ for $n \geq n_0$,
we have $T^{n}({\rm x}) \not \in I$ for $n \geq n_0$.
The proof of (ii) is similar and we skip it.
\hfill $\Box$
\bigskip

\noindent {\bf Proof of Theorem \ref{th: 2 and 4 all cases part 2}}\ \
We prove statement (i.) only, as the proof of statements (ii.)--(iv.) is similar.
Assume $\ell$ is odd and $(c_\ell,d_\ell) \preceq_{se} \overline{0}$.
If $c_\ell\,d_\ell < 0$, then there exists $t_*>0$ such that
 \begin{equation}
\label{eq: taylor 4.1 2}
\displaystyle
\frac{1}{t^\ell}\left(\,T(\overline{\rm x}+t\,{\rm v}) - ( \overline{\rm x} + t\,{\rm v}) \,\right) =   (c_\ell\,,\,d_\ell) + (O_1(t),O_2(t))
 \preceq_{se} \overline{0}\, , \quad t \in (-t_*,t_*) ,
\end{equation}
which implies $T(\overline{\rm x}+t\,{\rm v})  \preceq_{se} \overline{\rm x} + t\,{\rm v}$ for $t \in (0,t_*)$ and
$\overline{\rm x}+t\,{\rm v}  \preceq_{se}T( \overline{\rm x} + t\,{\rm x})$ for $t \in (-t_*,0)$.
The rest of the proof now proceeds as in the proof of Theorem \ref{th: 2 and 4 all cases}.
If $c_\ell\neq 0$ and $T(\overline{\rm x}+t\, {\rm x})^{(2)}$ is affine, then
$d_\ell=0$ and $O_2(t) = 0$ in (\ref{eq: taylor 4.1 2}), and the proof proceeds as before.
The same reasoning applies to the case
where $d_\ell\neq 0$ and $T(\overline{\rm x}+t v)^{(1)}$ is affine.
\hfill $\Box$


\end{document}